\documentclass[oneside,english]{amsart}
\usepackage[T1]{fontenc}
\usepackage[latin9]{inputenc}
\synctex=-1
\usepackage{babel}
\usepackage{verbatim}
\usepackage{amstext}
\usepackage{amsthm}
\usepackage{amssymb}
\usepackage[unicode=true,pdfusetitle,
 bookmarks=true,bookmarksnumbered=false,bookmarksopen=false,
 breaklinks=false,pdfborder={0 0 1},backref=section,colorlinks=false]
 {hyperref}

\makeatletter

\providecommand{\tabularnewline}{\\}

\numberwithin{equation}{section}
\numberwithin{figure}{section}
\theoremstyle{plain}
\newtheorem{thm}{\protect\theoremname}[section]
  \theoremstyle{remark}
  \newtheorem{rem}[thm]{\protect\remarkname}
  \theoremstyle{definition}
  \newtheorem{defn}[thm]{\protect\definitionname}
  \theoremstyle{plain}
  \newtheorem{fact}[thm]{\protect\factname}
  \theoremstyle{definition}
  \newtheorem{example}[thm]{\protect\examplename}
  \theoremstyle{plain}
  \newtheorem{lem}[thm]{\protect\lemmaname}
  \theoremstyle{plain}
  \newtheorem{cor}[thm]{\protect\corollaryname}
  \theoremstyle{plain}
  \newtheorem{prop}[thm]{\protect\propositionname}
  \theoremstyle{plain}
  \newtheorem{question}[thm]{\protect\questionname}
  \theoremstyle{remark}
  \newtheorem{claim}[thm]{\protect\claimname}
  \theoremstyle{remark}
  \newtheorem*{claim*}{\protect\claimname}


\usepackage{amsfonts}
\usepackage{nicefrac}
\usepackage{amscd}

\usepackage{a4wide}
\linespread{1.5}

\usepackage{url}


\AtBeginDocument{
  
}

\makeatother

  \providecommand{\claimname}{Claim}
  \providecommand{\corollaryname}{Corollary}
  \providecommand{\definitionname}{Definition}
  \providecommand{\examplename}{Example}
  \providecommand{\factname}{Fact}
  \providecommand{\lemmaname}{Lemma}
  \providecommand{\propositionname}{Proposition}
  \providecommand{\questionname}{Question}
  \providecommand{\remarkname}{Remark}
\providecommand{\theoremname}{Theorem}

\begin{document}
\global\long\def\code#1{\ulcorner#1\urcorner}
\global\long\def\p{\mathbf{p}}
\global\long\def\q{\mathbf{q}}
\global\long\def\C{\mathfrak{C}}
\global\long\def\SS{\mathcal{P}}
 \global\long\def\pr{\operatorname{pr}}
\global\long\def\image{\operatorname{im}}
\global\long\def\otp{\operatorname{otp}}
\global\long\def\dec{\operatorname{dec}}
\global\long\def\suc{\operatorname{suc}}
\global\long\def\pre{\operatorname{pre}}
\global\long\def\qe{\operatorname{qf}}
 \global\long\def\ind{\operatorname{ind}}
\global\long\def\Nind{\operatorname{Nind}}
\global\long\def\lev{\operatorname{lev}}
\global\long\def\Suc{\operatorname{Suc}}
\global\long\def\HNind{\operatorname{HNind}}
\global\long\def\minb{{\lim}}
\global\long\def\concat{\frown}
\global\long\def\cl{\operatorname{cl}}
\global\long\def\tp{\operatorname{tp}}
\global\long\def\id{\operatorname{id}}
\global\long\def\cons{\left(\star\right)}
\global\long\def\qf{\operatorname{qf}}
\global\long\def\ai{\operatorname{ai}}
\global\long\def\dtp{\operatorname{dtp}}
\global\long\def\acl{\operatorname{acl}}
\global\long\def\nb{\operatorname{nb}}
\global\long\def\limb{{\lim}}
\global\long\def\leftexp#1#2{{\vphantom{#2}}^{#1}{#2}}
\global\long\def\intr{\operatorname{interval}}
\global\long\def\atom{\emph{at}}
\global\long\def\I{\mathfrak{I}}
\global\long\def\uf{\operatorname{uf}}
\global\long\def\ded{\operatorname{ded}}
\global\long\def\Ded{\operatorname{Ded}}
\global\long\def\Df{\operatorname{Df}}
\global\long\def\Th{\operatorname{Th}}
\global\long\def\eq{\operatorname{eq}}
\global\long\def\Aut{\operatorname{Aut}}
\global\long\def\ac{ac}
\global\long\def\DfOne{\operatorname{df}_{\operatorname{iso}}}
\global\long\def\modp#1{\pmod#1}
\global\long\def\sequence#1#2{\left\langle #1\,\middle|\,#2\right\rangle }
\global\long\def\set#1#2{\left\{  #1\,\middle|\,#2\right\}  }
\global\long\def\Diag{\operatorname{Diag}}
\global\long\def\Nn{\mathbb{N}}
\global\long\def\mathrela#1{\mathrel{#1}}
\global\long\def\twiddle{\mathord{\sim}}
\global\long\def\mathordi#1{\mathord{#1}}
\global\long\def\Qq{\mathbb{Q}}
\global\long\def\dense{\operatorname{dense}}
\global\long\def\Rr{\mathbb{R}}
 \global\long\def\cof{\operatorname{cf}}
\global\long\def\tr{\operatorname{tr}}
\global\long\def\treeexp#1#2{#1^{\left\langle #2\right\rangle _{\tr}}}
\global\long\def\x{\times}
\global\long\def\forces{\Vdash}
\global\long\def\Vv{\mathbb{V}}
\global\long\def\Uu{\mathbb{U}}
\global\long\def\tauname{\dot{\tau}}
\global\long\def\ScottPsi{\Psi}
\global\long\def\cont{2^{\aleph_{0}}}
\global\long\def\MA#1{{MA}_{#1}}
\global\long\def\rank#1#2{R_{#1}\left(#2\right)}
\global\long\def\cal#1{\mathcal{#1}}

\def\Ind#1#2{#1\setbox0=\hbox{$#1x$}\kern\wd0\hbox to 0pt{\hss$#1\mid$\hss} \lower.9\ht0\hbox to 0pt{\hss$#1\smile$\hss}\kern\wd0} 
\def\Notind#1#2{#1\setbox0=\hbox{$#1x$}\kern\wd0\hbox to 0pt{\mathchardef \nn="3236\hss$#1\nn$\kern1.4\wd0\hss}\hbox to 0pt{\hss$#1\mid$\hss}\lower.9\ht0 \hbox to 0pt{\hss$#1\smile$\hss}\kern\wd0} 
\def\nind{\mathop{\mathpalette\Notind{}}} 

\global\long\def\ind{\mathop{\mathpalette\Ind{}}}
\global\long\def\Age{\operatorname{Age}}
\global\long\def\lex{\operatorname{lex}}
\global\long\def\len{\operatorname{len}}

\global\long\def\dom{\operatorname{Dom}}
\global\long\def\res{\operatorname{res}}
\global\long\def\alg{\operatorname{alg}}
\global\long\def\dcl{\operatorname{dcl}}
 \global\long\def\nind{\mathop{\mathpalette\Notind{}}}
\global\long\def\average#1#2#3{Av_{#3}\left(#1/#2\right)}
\global\long\def\Ff{\mathfrak{F}}
\global\long\def\mx#1{Mx_{#1}}
\global\long\def\maps{\mathfrak{L}}

\global\long\def\Esat{E_{\mbox{sat}}}
\global\long\def\Ebnf{E_{\mbox{rep}}}
\global\long\def\Ecom{E_{\mbox{com}}}
\global\long\def\BtypesA{S_{\Bb}^{x}\left(A\right)}
\global\long\def\DenseTrees{T_{dt}}

\global\long\def\init{\trianglelefteq}
\global\long\def\fini{\trianglerighteq}
\global\long\def\Bb{\cal B}
\global\long\def\Lim{\operatorname{Lim}}
\global\long\def\Succ{\operatorname{Succ}}

\global\long\def\SquareClass{\cal M}
\global\long\def\leqstar{\leq_{*}}
\global\long\def\average#1#2#3{Av_{#3}\left(#1/#2\right)}
\global\long\def\cut#1{\mathfrak{#1}}
\global\long\def\NTPT{\text{NTP}_{2}}
\global\long\def\Zz{\mathbb{Z}}
\global\long\def\TPT{\text{TP}_{2}}
\global\long\def\supp{\operatorname{supp}}

\global\long\def\OurSequence{\mathcal{I}}
\global\long\def\SUR{SU}
\global\long\def\ShiftGraph#1#2{Sh_{#2}\left(#1\right)}
\global\long\def\ShiftGraphHalf#1{\cal G_{#1}^{\frac{1}{2}}}
\global\long\def\SymShiftGraph#1#2{Sh_{#2}^{sym}\left(#1\right)}
\global\long\def\aa{\textsf{aa}}
\global\long\def\Mm{\mathbb{M}}
\global\long\def\stat{\textsf{stat}}

\title{Local character of Kim-independence}

\author{Itay Kaplan, Nicholas Ramsey, and Saharon Shelah}

\thanks{Partially supported by European Research Council grant 338821. No.
1118 on third author's list.}

\thanks{The first-named author would like to thank the Israel Science Foundation
for partial support of this research (Grant no. 1533/14).}

\subjclass[2010]{03C45, 03C55, 03C80}
\begin{abstract}
We show that NSOP$_{1}$ theories are exactly the theories in which
Kim-independence satisfies a form of local character. In particular,
we show that if $T$ is NSOP$_{1}$, $M\models T$, and $p$ is a
type over $M$, then the collection of elementary substructures of
size $\left|T\right|$ over which $p$ does not Kim-fork is a club
of $\left[M\right]^{\left|T\right|}$ and that this characterizes
NSOP$_{1}$. 

We also present a new phenomenon we call dual local-character for
Kim-independence in NSOP$_{1}$-theories. 
\end{abstract}

\maketitle

\section{Introduction}

A well-known theorem of Kim and Pillay characterizes the simple theories
as those theories with an independence relation satisfying certain
properties and shows that, moreover, any such independence relation
must coincide with non-forking independence. As the theory of simple
theories was being developed, work of Chatzidakis on $\omega$-free
PAC fields and Granger on vector spaces with bilinear forms furnished
examples of non-simple theories for which there are nonetheless independence
relations satisfying many of the fundamental properties of non-forking
independence in simple theories. These properties include extension,
symmetry, and the independence theorem. Chernikov and the second-named
author proved an analogue of one direction of the Kim-Pillay theorem
for NSOP$_{1}$ theories, showing essentially that the existence of
an independence relation with these properties implies that a theory
is NSOP$_{1}$. To establish the other direction, the first and second-named
authors introduced \emph{Kim-independence} and showed that it is well-behaved
in any NSOP$_{1}$ theory. The theory of Kim-independence provides
an explanation for the simplicity-like phenomena observed in certain
non-simple examples and a central issue of research concerning NSOP$_{1}$
theories is to determine the extent to which properties of non-forking
independence in simple theories carry over to Kim-independence in
NSOP$_{1}$ theories. This paper addresses the specific issue of local
character for Kim-independence.

Simple theories are defined to be the theories in which forking satisfies
local character. Local character of non-forking asserts that there
is some cardinal $\kappa\left(T\right)$ so that, for any complete
type $p$ over $A$, there is a set $B\subseteq A$ with $\left|B\right|<\kappa\left(T\right)$
over which $p$ does not fork. An analogue of local character for
Kim-independence in NSOP$_{1}$ theories was proved by the first-
and second-named authors in \cite[Theorem 4.5]{kaplan2017kim}. It
was shown there that if $T$ is NSOP$_{1}$ and $M\models T$, then
for any $p\in S\left(M\right)$, there is $N\prec M$ with $|N|<\kappa=\left(2^{\left|T\right|}\right)^{+}$
such that $p$ does not Kim-fork over $N$. 

However, this result was an unsatisfactory generalization of local
character in simple theories for three reasons. First, with respect
to non-forking, it follows almost immediately that if $\kappa\left(T\right)$
exists at all, it can be taken to be $\left|T\right|^{+}$: given
a type $p\in S\left(A\right)$ with no $B\subseteq A$ of size $<\left|T\right|^{+}$
over which $p$ does not fork, one can find a chain of forking types
of length $\left|T\right|^{+}$ and then by the pigeonhole principle,
some formula must fork infinitely often with respect to the same disjunction
of dividing formulas. This equivalence is no longer immediate when
considering Kim-independence, because of the added constraint that
the formulas must divide with respect to Morley sequences and it was
asked \cite[Question 4.7]{kaplan2017kim} if $\left(2^{\left|T\right|}\right)^{+}$
can be replaced by $\left|T\right|^{+}$ in an arbitrary NSOP$_{1}$
theory. Secondly, non-forking independence satisfies \emph{base monotonicity},
which means that if $p\in S\left(A\right)$ does not fork over $B$,
then $p$ does not fork over $B'$ whenever $A\subseteq B'\subseteq B$.
In other words, local character of forking implies that every type
does not fork over an entire \emph{cone} of small subsets of its domain.
However, in an NSOP$_{1}$ theory $T$, Kim-independence satisfies
base monotonicity if and only if $T$ is simple. One would like an
analogue of local character for NSOP$_{1}$ theories that shows that
types over models do not Kim-divide over \emph{many} small submodels.
Finally, local character of non-forking independence \emph{characterizes}
simple theories. Many tameness properties of Kim-independence are
known to characterize NSOP$_{1}$ theories, e.g. symmetry and the
independence theorem, so it is natural to ask if local character does
as well.

Our main theorem is:
\begin{thm}
\label{thm:Main-intro}Suppose $T$ is a complete theory with monster
model $\mathbb{M}\models T$. The following are equivalent: 
\begin{enumerate}
\item $T$ is NSOP$_{1}$. 
\item There is no continuous increasing sequence of $|T|$-sized models
$\sequence{M_{i}}{i<\left|T\right|^{+}}$ with union $M$ and $p\in S\left(M\right)$
such that $p\upharpoonright M_{i+1}$ Kim-forks over $M_{i}$ for
all $i<|T|^{+}$. 
\item For any $M\models T$, $p\in S\left(M\right)$, the set of elementary
substructures of $M$ of size $\left|T\right|$ over which $p$ does
not Kim-divide is a stationary subset of $\left[M\right]^{\left|T\right|}$. 
\item For any $M\models T$, $p\in S\left(M\right)$, the set of elementary
substructures of $M$ of size $\left|T\right|$ over which $p$ does
not Kim-divide contains a club subset of $\left[M\right]^{\left|T\right|}$. 
\item For any $M\models T$, $p\in S\left(M\right)$, the set of elementary
substructures of $M$ of size $\left|T\right|$ over which $p$ does
not Kim-divide is a club subset of $\left[M\right]^{\left|T\right|}$. 
\item Suppose that $N\models T$, $M\prec N$ and $p\in S\left(N\right)$
does not Kim-divide over $M$. Then the set of elementary substructures
of $M$ of size $\left|T\right|$ over which $p$ does not Kim-divide
is a club subset of $\left[M\right]^{\left|T\right|}$. 
\end{enumerate}
\end{thm}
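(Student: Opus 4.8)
The plan is to prove the cycle of implications $(1)\Rightarrow(6)\Rightarrow(5)\Rightarrow(4)\Rightarrow(3)\Rightarrow(2)\Rightarrow(1)$, with the bulk of the work in $(1)\Rightarrow(6)$ and in the final $(2)\Rightarrow(1)$. For the easy links: $(5)\Rightarrow(4)\Rightarrow(3)$ are immediate since a club is contained in a club is stationary; and $(6)\Rightarrow(5)$ follows by taking $N=M$ and using that $p$ trivially does not Kim-divide over $M$ itself (reflexivity of Kim-dividing over the model it is a type over). The implication $(3)\Rightarrow(2)$ is a standard Fodor/stationarity argument: given a continuous increasing chain $\langle M_i\mid i<|T|^+\rangle$ witnessing the failure of $(2)$, the chain enumerates a club $C$ in $[M]^{|T|}$, and any $N\in C$ of the form $M_i$ for $i$ a limit has $p\upharpoonright N$ Kim-forking over all sufficiently large $M_j\subseteq N$, hence (using that Kim-forking and Kim-dividing agree over models in NSOP$_1$ — but here we only need to contradict stationarity, so the combinatorics should yield directly that the set of $N$ over which $p$ Kim-divides is co-stationary, contradicting $(3)$). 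One must be slightly careful to phrase everything in terms of Kim-dividing versus Kim-forking, using the fact from \cite{kaplan2017kim} that these coincide over models in NSOP$_1$ theories; I would isolate that at the start.

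The heart is $(1)\Rightarrow(6)$. Fix $N\models T$, $M\prec N$, and $p\in S(N)$ not Kim-dividing over $M$. I need that $\{N'\in[M]^{|T|}: N'\prec M,\ p\text{ does not Kim-divide over }N'\}$ contains a club and is closed. For the club part: build, via a Skolem-hull / elementary-submodel-of-$H(\chi)$ argument, the club of $N'\prec M$ with $|N'|=|T|$ that are sufficiently closed under the relevant data — in particular, closed under witnesses for the existence of $M$-invariant Morley sequences in $p$ and under the parameters appearing in any Kim-dividing configuration. The key technical input is the existence and uniqueness (mod $\Aut$) of global $M$-invariant types extending $p$, plus the chain-local-character result \cite[Theorem 4.5]{kaplan2017kim} used *internally*: since $p$ does not Kim-divide over $M$, I want to show the set of "bad" $N'$ (over which $p$ Kim-divides) is non-stationary by showing it contains no continuous chain of length $|T|^+$ — if it did, one extracts a chain of Kim-dividing formulas and runs the same pigeonhole-with-Morley-sequences argument that powers Theorem 4.5 to get a contradiction with NSOP$_1$ (this is essentially where NSOP$_1$ is used, via the tree property / array characterization). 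Closure under unions of $<|T|^+$-chains (to get an actual club, not just a stationary set or a set containing a club) uses finite character of Kim-dividing: if $p$ Kim-divides over $N'=\bigcup_i N'_i$, the dividing is witnessed by a formula with finitely many parameters, hence parameters in some $N'_i$, and an $M$-invariant Morley sequence over $N'$ restricts/refines to one over $N'_i$ — so $p$ already Kim-divides over some $N'_i$. Combining these gives that the good set is a club.

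The main obstacle I anticipate is exactly the interaction between *base monotonicity failure* and the combinatorial extraction in $(1)\Rightarrow(6)$: unlike forking, I cannot freely move the base down a cone, so the pigeonhole argument that produces a single formula Kim-dividing cofinally often must be set up with the Morley sequences all invariant over the *same* fixed base $M$ (not over the moving $N'_i$), and then one must argue that such a configuration, transported appropriately, yields an instance of SOP$_1$. I would handle this by first reducing to the chain statement $(2)$-style setup relative to $M$: assume toward contradiction a continuous increasing chain $\langle N'_i\mid i<|T|^+\rangle$ of elementary submodels of $M$ with union $N'_\infty$, each $N'_{i+1}$ with $p\upharpoonright N'_{i+1}$ Kim-dividing over $N'_i$; fix a global $M$-finitely-satisfiable (hence $M$-invariant) extension $q\supseteq p$, take a Morley sequence in $q$ over $M$, and use it simultaneously as a witnessing sequence over every $N'_i$ — here the fact that $p$ does *not* Kim-divide over $M$ (so $q$ exists with the right properties over $M$) is what lets the single Morley sequence do double duty. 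Then a $\Delta$-system / pigeonhole on the $|T|^+$ many Kim-dividing formulas, together with the NSOP$_1$ criterion (e.g. Kim's lemma for NSOP$_1$: Kim-dividing is witnessed by *any* invariant Morley sequence), should force an SOP$_1$-tree, contradicting (1). The remaining implication $(2)\Rightarrow(1)$ is then obtained by contrapositive: if $T$ has SOP$_1$, Chernikov--Ramsey-type constructions build the witnessing chain of length $|T|^+$, so $(2)$ fails.
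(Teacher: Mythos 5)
Your skeleton (the cycle $(1)\Rightarrow(6)\Rightarrow(5)\Rightarrow(4)\Rightarrow(3)\Rightarrow(2)\Rightarrow(1)$, with $(3)\Rightarrow(2)$ via the chain forming a club and $(2)\Rightarrow(1)$ via building a chain from an SOP$_1$ array) matches the paper, but the heart of your $(1)\Rightarrow(6)$ has two genuine gaps. First, your plan to show the bad set is non-stationary ``by showing it contains no continuous chain of length $|T|^+$'' rests on a false combinatorial implication: a stationary subset of $\left[M\right]^{|T|}$ need not contain a continuous increasing chain of length $|T|^+$. Indeed, when $|M|=|T|^+$ such a chain with union $M$ is exactly a club of $\left[M\right]^{|T|}$, so your criterion would only rule out the bad set \emph{containing a club}, not its being stationary; a stationary co-stationary set is a counterexample. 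This is precisely the obstruction the paper's original argument circumvents with stationary logic (the Mekler--Shelah transfer theorem is used to stretch a stationary bad set into one from which a chain \emph{can} be extracted), and which the final proof avoids altogether by exhibiting the club directly: fix a global $M$-finitely satisfiable $q\supseteq\tp(b/M)$ and a coheir sequence in $q$ over $M$; for club-many $N\prec M$ this is still a coheir sequence over $N$ (an heir argument), and since $p$ is consistent along it, Kim's lemma gives non-Kim-dividing over all such $N$. You have these ingredients (the ``single Morley sequence doing double duty'') but deploy them inside a contradiction whose outer logic doesn't close.

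Second, your closure-under-unions step is exactly the base-monotonicity move that fails for Kim-independence. You argue that if $p$ Kim-divides over $N'=\bigcup_i N'_i$ then the witnessing formula has parameters in some $N'_i$ and ``an invariant Morley sequence over $N'$ restricts to one over $N'_i$'' --- but a global type invariant (or finitely satisfiable) over the union need not be invariant over a piece, so the witnessing sequence over $N'$ is not a Morley sequence over $N'_i$; if this argument worked it would prove base monotonicity for $\ind^{K}$, which holds only in simple theories. The correct statement (the paper's Lemma \ref{lem:closed under union}) crucially uses the hypothesis that $p$ is Kim-\emph{independent} over each $N'_i$ (a hypothesis your argument never invokes), together with symmetry and the auxiliary Lemma \ref{lem:if phi(x,a) forks over big then also over small} ($a\ind_{M}^{K}N$ and $\varphi(x,a)$ Kim-dividing over $N$ with $\varphi\in L(M)$ implies Kim-dividing over $M$), whose proof builds a reversed heir sequence and applies Fact \ref{fact:Kim Morley is consistent}. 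Minor further points: uniqueness of global $M$-invariant extensions of $p$ is false and not needed; and in $(3)\Rightarrow(2)$ one should address the mismatch between Kim-forking in (2) and Kim-dividing in (3) without presupposing NSOP$_1$, though you do flag this.
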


The equivalence of (1) and (2) was noted in \cite[Corollary 4.6]{kaplan2017kim}
with $\left|T\right|^{+}$ replaced by $\left(2^{\left|T\right|}\right)^{+}$,
which is considerably weaker than the theorem proved here.

In particular, this theorem implies that if $T$ is NSOP$_{1}$, $M\models T$,
and $p\in S\left(M\right)$, then the set of $N\prec M$ with $\left|N\right|=\left|T\right|$
such that $p$ does not Kim-fork over $N$ is \emph{non-empty}, answering
a question asked by the first and second-named authors \cite[Question 4.7]{kaplan2017kim}.
However, by demanding a stronger form of local character, we obtain
a new characterization of NSOP$_{1}$. 
\begin{rem}
In the first draft of this paper, published online on July 2017, we
did not yet have (5) or (6) above. Shortly after that draft was available,
Pierre Simon have found an easier proof of (1) implies (4), and we
thank him for allowing us to include his proof here. Later we found
a proof of (6). These proofs uses symmetry of Kim-independence, but
are not straightforward as in the proof in simple theories, and our
original proof.

Our original proof assumes towards contradiction that local character
fails and reaches a contradiction to NSOP$_{1}$ as is done in e.g.
simple theories. For this approach to work we used stationary logic.
This logic expands first-order logic by introducing a quantifier $\textsf{aa}$
interpreted so that $M\models\left(\aa S\right)\varphi\left(S\right)$
if and only if the set of countable subsets $X\subseteq M$ such that
$M$, when expanded with the predicate $S$ interpreted as $X$, satisfies
$\varphi\left(S\right)$ contains a club of $\left[M\right]^{\omega}$.
This logic was introduced by the third-named author in \cite{shelah1975generalized}
and later studied by Mekler and the third-named author \cite{mekler1986stationary}
who showed that the satisfiability of a theory in $L\left(\aa\right)$
implies the satisfiability of a theory in a related logic, where the
second-order quantifiers range over \emph{uncountable} sets of a certain
size. This theorem, which may be regarded as a version of the upward
Lowenheim-Skolem theorem, provides a tool for ``stretching\textquotedbl{}
a family of counterexamples to local character in such a way that
preserves the cardinality and continuity constraints needed to produce
SOP$_{1}$.

After further review, we noticed that our original proof gives rise
to a new phenomenon, which we call dual local character. 

In light of all this, we decided to re-arrange the paper in the following
way. After a short preliminaries section, we prove the main theorem.
In Section \ref{sec:A proof using stationary logic} we discuss stationary
logic and describe our original proof%
.  In Section \ref{sec:Dual-local-character} we discuss the dual
local character. %
{} 
\end{rem}

\section{Preliminaries}

\subsection{NSOP$_{1}$ theories, invariant types, and Morley sequences}
\begin{defn}
\cite[Definition 2.2]{dvzamonja2004maximality} A formula $\varphi\left(x;y\right)$
has the $1$-\emph{strong order property} \emph{(SOP}$_{1})$ if there
is a tree of tuples $\sequence{a_{\eta}}{\eta\in2^{<\omega}}$ so
that 
\begin{itemize}
\item For all $\eta\in2^{\omega}$, the partial type $\set{\varphi\left(x;a_{\eta\restriction n}\right)}{n<\omega}$
is consistent. 
\item For all $\nu,\eta\in2^{<\omega}$, if $\nu\frown\langle0\rangle\unlhd\eta$
then $\left\{ \varphi\left(x;a_{\eta}\right),\varphi\left(x;a_{\nu\frown\langle1\rangle}\right)\right\} $
is inconsistent. 
\end{itemize}
\end{defn}

A theory $T$ is \emph{NSOP}$_{1}$ if no formula has SOP$_{1}$ modulo
$T$. 
\begin{fact}
\label{karyversion} \cite[Proposition 2.4]{kaplan2017kim} $T$ has
NSOP$_{1}$ if and only if there is a formula $\varphi\left(x;y\right)$,
$k<\omega$, and a sequence $\sequence{\bar{c}_{i}}{i\in I}$ with
$\overline{c}_{i}=\left(c_{i,0},c_{i,1}\right)$ satisfying: 
\begin{enumerate}
\item For all $i\in I$, $c_{i,0}\equiv_{\overline{c}_{<i}}c_{i,1}$. 
\item $\set{\varphi\left(x;c_{i,0}\right)}{i\in I}$ is consistent. 
\item $\set{\varphi\left(x;c_{i,1}\right)}{i\in I}$ is $k$-inconsistent. 
\end{enumerate}
\end{fact}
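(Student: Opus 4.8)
\emph{Reduction.} The plan is to establish the substantive form of the statement: \emph{some formula has $\mathrm{SOP}_1$ if and only if there are a formula $\varphi(x;y)$, $k<\omega$, and an infinite sequence $\langle\bar c_i\mid i\in I\rangle$ with $\bar c_i=(c_{i,0},c_{i,1})$ satisfying (1)--(3)}; since $T$ is NSOP$_1$ exactly when no formula has $\mathrm{SOP}_1$, this yields the characterization. (That $I$ be infinite is needed, as clause (3) is vacuous for singleton $I$.) Throughout write $0^{(n)}\in 2^{<\omega}$ for the constant $0$-sequence of length $n$.

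\emph{From $\mathrm{SOP}_1$ to the configuration.} Suppose $\psi(x;y)$ has $\mathrm{SOP}_1$, witnessed by a tree $(a_\eta)_{\eta\in 2^{<\omega}}$. By the modeling theorem for strongly indiscernible trees (Kim--Kim--Scow) I may assume $(a_\eta)_{\eta\in 2^{<\omega}}$ is strongly indiscernible over $\emptyset$ in the tree language $\{\unlhd,<_{\mathrm{lex}},\wedge\}$ and still witnesses $\mathrm{SOP}_1$ for $\psi$, since both clauses of the definition of $\mathrm{SOP}_1$ are, modulo compactness, conditions on finite sub-trees. Then set $\varphi:=\psi$, $k:=2$, $I:=\omega$, and
\[
c_{i,0}:=a_{0^{(i+1)}},\qquad c_{i,1}:=a_{0^{(i)}\frown\langle 1\rangle}.
\]
Clause (2) holds because $\{\psi(x;c_{i,0})\mid i<\omega\}\subseteq\{\psi(x;a_{0^{(n)}})\mid n<\omega\}$, the latter being consistent as the type along the branch $0^{(\omega)}$. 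Clause (3) holds with $k=2$ because for $i<j$ we have $0^{(i)}\frown\langle 0\rangle=0^{(i+1)}\unlhd 0^{(j)}\frown\langle 1\rangle$, so $\{\psi(x;c_{i,1}),\psi(x;c_{j,1})\}$ is inconsistent by the definition of $\mathrm{SOP}_1$. For clause (1): the elements of $\bar c_{<i}$ are indexed by $S:=\{0^{(m)}\mid 1\le m\le i\}\cup\{0^{(m)}\frown\langle 1\rangle\mid 0\le m\le i-1\}$, and a direct check shows that in $(2^{<\omega},\unlhd,<_{\mathrm{lex}},\wedge)$ the index $0^{(i+1)}$ of $c_{i,0}$ and the index $0^{(i)}\frown\langle 1\rangle$ of $c_{i,1}$ have the same quantifier-free type over $S$ --- both are proper extensions of every $0^{(m)}\in S$, both are $\unlhd$-incomparable with and $<_{\mathrm{lex}}$-below every $0^{(m)}\frown\langle 1\rangle\in S$, for each $\mu\in S$ the meets $0^{(i+1)}\wedge\mu$ and $(0^{(i)}\frown\langle 1\rangle)\wedge\mu$ coincide, and both are distinct from every element of $S$. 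By strong indiscernibility, $c_{i,0}\equiv_{\bar c_{<i}}c_{i,1}$, which is (1).

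\emph{From the configuration to $\mathrm{SOP}_1$.} Conversely, given $\varphi$, $k$ and an infinite $\langle\bar c_i\rangle$ with (1)--(3), I would first use Ramsey and compactness to pass to an indiscernible sequence $\langle\bar c_i\mid i<\omega\rangle$ realizing the Ehrenfeucht--Mostowski type of the original (reindexed by any prescribed infinite linear order if convenient): (2) and (3) survive as they are finitary, and (1) survives because each of its instances is a biconditional $\theta(c_{i,0},\bar c_{i_0},\dots,\bar c_{i_m})\leftrightarrow\theta(c_{i,1},\bar c_{i_0},\dots,\bar c_{i_m})$ with $i_0<\dots<i_m<i$, which is a consequence of (1) for the original sequence and so lies in the EM-type. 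From this indiscernible configuration I would build a tree $(a_\eta)_{\eta\in 2^{<\omega}}$ witnessing $\mathrm{SOP}_1$ for a formula $\psi$ derived from $\varphi$ (a conjunction of copies of $\varphi$, to upgrade $k$-inconsistency to $2$-inconsistency), by recursion on the levels of $2^{<\omega}$: at each already placed node one attaches a fresh copy of (a tail of) the indiscernible sequence, amalgamated over the part of the tree so far via the automorphisms from (1) --- which realize $c_{i,0}$ and $c_{i,1}$ as conjugate over $\bar c_{<i}$ --- so that a $0$-successor continues the ``consistent side'' ($0$-coordinates) of its copy and a $1$-successor records a ``$1$-coordinate''. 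Consistency along branches then comes from (2), while the inconsistency clause --- $\{\psi(x;a_\eta),\psi(x;a_{\nu\frown\langle 1\rangle})\}$ inconsistent whenever $\nu\frown\langle 0\rangle\unlhd\eta$ --- comes from (3), provided the recursion is arranged so that $a_{\nu\frown\langle 1\rangle}$ and the nodes $a_\eta$ above $\nu\frown\langle 0\rangle$ are pinned over $1$-coordinates of one and the same copy of the sequence.

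\emph{Main obstacle.} The ``only if'' direction is a clean extraction once the modeling theorem for strongly indiscernible trees is granted. The substance is in the ``if'' direction: the recursion must be organized so that each $1$-witness $a_{\nu\frown\langle 1\rangle}$ is genuinely $\psi$-inconsistent with \emph{every} branch through $\nu\frown\langle 0\rangle$, even though (1)--(3) only assert $k$-inconsistency \emph{among $1$-coordinates of a single copy} of the sequence. This forces the subtree grown below each $0$-node to be built from the very copy of the sequence that supplied the sibling $1$-node --- which is exactly what the conjugacy in (1) (over $\bar c_{<i}$, crucially not over $\bar c_{\le i}$) makes possible --- and it interacts with the $k$-to-$2$ reduction, which cannot be done by naively grouping the $\bar c_i$ into blocks since (1) does not pass from single coordinates to tuples. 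I expect this coherent bookkeeping, rather than any one isolated step, to be the delicate part of the argument.
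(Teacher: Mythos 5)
First, a framing point: the paper does not prove Fact \ref{karyversion} at all; it is quoted with a citation to Kaplan--Ramsey (Proposition 2.4 of \emph{On Kim-independence}, which in turn rests on Chernikov--Ramsey's work on SOP$_1$ via strongly indiscernible trees), so your attempt has to be measured against that external argument rather than anything in this text. Your reading of the statement is the right one: as printed, ``NSOP$_{1}$'' is a slip for ``SOP$_{1}$'' (this is exactly how the Fact is used later, in the proof of (2)$\Rightarrow$(1) of Theorem \ref{mainthm(1)-(6)} and at the end of Lemma \ref{hard direction}), and the index set $I$ must indeed be read as infinite, as you note. Your direction ``SOP$_{1}$ $\Rightarrow$ configuration'' is correct and is essentially the standard extraction: after passing to a strongly indiscernible tree by the modeling theorem, the choice $c_{i,0}=a_{0^{(i+1)}}$, $c_{i,1}=a_{0^{(i)}\frown\langle 1\rangle}$ works, and your verification that these two nodes have the same quantifier-free tree-type over the indices of $\bar{c}_{<i}$ is accurate (a minor caveat: the modeling theorem is usually stated for $\omega^{<\omega}$-indexed trees, so formally one routes the argument through that index set, but this is cosmetic).

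The genuine gap is the converse, which this paper also needs: it is precisely how Lemma \ref{hard direction} concludes SOP$_{1}$ from the array it constructs. For ``configuration $\Rightarrow$ SOP$_{1}$'' you give only a plan and explicitly defer the two points where all the content lies: (i) arranging the recursion so that each $1$-node $a_{\nu\frown\langle 1\rangle}$ is inconsistent with \emph{every} node $a_{\eta}$ with $\nu\frown\langle 0\rangle\unlhd\eta$, even though clause (3) only yields inconsistency among the $1$-coordinates of a single copy of the sequence; and (ii) upgrading $k$-inconsistency to the pairwise inconsistency demanded by the definition of SOP$_{1}$, which, as you yourself observe, cannot be done by naively blocking the $\bar{c}_{i}$ because clause (1) does not pass to blocks. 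In the cited proof these difficulties are resolved together, by an induction that builds finite approximations to the tree, at each stage applying an automorphism over $\bar{c}_{<i}$ supplied by (1) to a copy of the configuration already built, with the $k$-inconsistency absorbed into the inductive statement rather than removed in advance, and compactness invoked at the end. Nothing in your sketch substitutes for that argument; ending with ``I expect this coherent bookkeeping to be the delicate part'' concedes exactly the step that constitutes the theorem. As it stands, you have proved one implication and conjectured the other, so the proposal does not establish the stated equivalence.
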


We also use following notation. Write $a\ind_{M}^{u}B$ for $\tp\left(a/MB\right)$
is finitely satisfiable in $M$, in other words it is a \emph{coheir}
of its restriction to $M$. A type $p\in S\left(M\right)$ is an \emph{heir}
of its restriction to $N\prec M$ if for every formula $\varphi\left(x;y\right)\in L\left(N\right)$
and every $b\in M$, if $\varphi\left(x;b\right)\in p$ then $\varphi\left(x;b'\right)\in p$
for some $b'\in N$. We denote this by $c\ind_{N}^{h}M$. This is
equivalent to saying that $M\ind_{N}^{u}c$. 
\begin{defn}
A global type $q\in S\left(\Mm\right)$ is called $A$\emph{-invariant}
if $b\equiv_{A}b'$ implies $\varphi\left(x;b\right)\in q$ if and
only if $\varphi\left(x;b'\right)\in q$. A global type $q$ is \emph{invariant}
if there is some small set $A$ such that $q$ is $A$-invariant.
If $q\left(x\right)$ and $r\left(y\right)$ are $A$-invariant global
types, then the type $\left(q\otimes r\right)\left(x,y\right)$ is
defined to be $\text{tp}\left(a,b/\mathbb{M}\right)$ for any $b\models r$
and $a\models q|_{\mathbb{M}b}$. It is also $A$-invariant. We define
$q^{\otimes n}\left(x_{0},\ldots,x_{n-1}\right)$ by induction: $q^{\otimes1}=q$
and $q^{\otimes n+1}=q\left(x_{n}\right)\otimes q^{\otimes n}\left(x_{0},\ldots,x_{n-1}\right)$.
\end{defn}

\begin{fact}
\cite[Lemma 4.1]{shelah1990classification}\label{average} If $T$
is any complete theory, $M\models T$, and $p\in S\left(M\right)$,
then there is a complete global type $q$ extending $p$ which is,
moreover, finitely satisfiable in $M$. In particular, $q$ is $M$-invariant. 
\end{fact}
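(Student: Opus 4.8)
The plan is to build the global type $q$ as a limit of finitely satisfiable approximations using an ultrafilter, which is the standard trick underlying this classical fact. First I would recall that a global type $q \in S(\mathbb{M})$ is \emph{finitely satisfiable in $M$} precisely when, for every formula $\varphi(x;b)$ with $b \in \mathbb{M}$, if $\varphi(x;b) \in q$ then $\models \varphi(c;b)$ for some $c \in M$. So the task is to choose a coherent collection of formulas over all of $\mathbb{M}$, each realized in $M$, extending the given $p \in S(M)$.

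The key step is to fix an ultrafilter on $M$ of the right kind. Concretely, consider the set $M^x$ of $x$-tuples from $M$ (where $x$ is the tuple of variables of $p$), and take any ultrafilter $\mathcal{U}$ on $M^x$ such that for each formula $\psi(x) \in p$, the set $\{c \in M^x : \models \psi(c)\}$ belongs to $\mathcal{U}$; such $\mathcal{U}$ exists because $p$ is consistent and realized in $M$ (indeed $p$ is a complete type over $M$, so the sets $\{c : \models \psi(c)\}$ for $\psi \in p$ form a filter base on $M^x$ — here I should be mildly careful about whether $p$ is realized in $M$, but since $p \in S(M)$ I can pass to an elementary extension realizing $p$ if needed, or simply use that every finite fragment of $p$ is satisfiable, hence realized in $M$ by elementarity, making the relevant sets nonempty and closed under finite intersection). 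Then define $q = \mathrm{Av}(\mathcal{U}/\mathbb{M}) := \{\varphi(x;b) : b \in \mathbb{M},\ \{c \in M^x : \models \varphi(c;b)\} \in \mathcal{U}\}$.

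Next I would verify the three required properties. That $q$ is a complete consistent type over $\mathbb{M}$ follows from the fact that $\mathcal{U}$ is an ultrafilter: for any $\varphi(x;b)$, exactly one of $\{c : \models \varphi(c;b)\}$ and its complement $\{c : \models \neg\varphi(c;b)\}$ lies in $\mathcal{U}$, giving completeness; and finite consistency follows since a finite conjunction lands in $q$ iff the intersection of the corresponding sets lies in $\mathcal{U}$, which is then nonempty, so the conjunction is realized (by some $c \in M$, in fact). That $q$ extends $p$ is immediate from the choice of $\mathcal{U}$. That $q$ is finitely satisfiable in $M$ is built into the definition: if $\varphi(x;b) \in q$ then $\{c \in M^x : \models \varphi(c;b)\} \in \mathcal{U}$ is in particular nonempty, so $\varphi(x;b)$ is realized by some tuple from $M$. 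Finally, $M$-invariance of $q$ follows formally from finite satisfiability in $M$: if $b \equiv_M b'$ but $\varphi(x;b) \in q$ and $\varphi(x;b') \notin q$, then $\neg\varphi(x;b') \in q$, so $\varphi(x;b) \wedge \neg\varphi(x;b')$ would be finitely satisfiable in $M$ — but no tuple from $M$ can satisfy a formula $\chi(x,b) \wedge \neg\chi(x,b')$ when $b \equiv_M b'$, a contradiction via an automorphism fixing $M$ and sending $b$ to $b'$.

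The main obstacle — really the only subtle point — is the existence of the ultrafilter $\mathcal{U}$ with the prescribed property, i.e. checking that $\{\, \{c \in M^x : \models \psi(c)\} : \psi \in p \,\}$ genuinely has the finite intersection property so that it extends to an ultrafilter. This reduces to showing each such set is nonempty, which in turn reduces to: every finite fragment of $p$, being a consistent $L(M)$-sentence-with-parameters, is realized inside $M$ itself. This holds because $p$, as a type over the model $M$, is finitely satisfiable in $M$ by definition of "type over a model" — or one invokes elementarity of $M$ in $\mathbb{M}$. Once that is in hand, everything else is bookkeeping with the ultrafilter axioms.
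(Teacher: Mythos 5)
Your proof is correct: the paper states this as a cited Fact (Shelah, Lemma 4.1) without proof, and your ultrafilter construction of the average type $\mathrm{Av}(\mathcal{U}/\mathbb{M})$ is exactly the standard argument behind that citation, including the correct key observation that $p$ is finitely satisfiable in $M$ because $M\prec\mathbb{M}$ (the aside about passing to an elementary extension realizing $p$ is unnecessary, but you discard it yourself). The verification of completeness, consistency, extension of $p$, finite satisfiability, and the derivation of $M$-invariance from finite satisfiability via an automorphism fixing $M$ are all sound.
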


\begin{defn}
Suppose $q$ is an $A$-invariant global type and $I$ is a linearly
ordered set. By a \emph{Morley sequence in }$q$ \emph{over} $A$
\emph{of order type} $I$, we mean a sequence $\sequence{b_{\alpha}}{\alpha\in I}$
such that for each $\alpha\in I$, $b_{\alpha}\models q|_{Ab_{<\alpha}}$
where $b_{<\alpha}=\sequence{b_{\beta}}{\beta<\alpha}$. Given a linear
order $I$, we will write $q^{\otimes I}$ for the $A$-invariant
type in variables $\sequence{x_{\alpha}}{\alpha<I}$ so that for any
$B\supseteq A$, if $\overline{b}\models q^{\otimes I}|_{B}$ then
$b_{\alpha}\models q|_{Bb_{<\alpha}}$ for all $\alpha\in I$. If
$q$ is, moreover, finitely satisfiable in $A$, then we refer to
a Morley sequence in $q$ over $A$ as a \emph{coheir sequence} over
$A$.
\end{defn}

The above definition of $q^{\otimes I}$ generalizes the finite tensor
product $q^{\otimes n}$ \textendash{} given any global $A$-invariant
type $q$ and linearly ordered set $I$, one may easily show that
$q^{\otimes I}$ exists and is $A$-invariant by compactness.
\begin{defn}
Suppose $M$ is a model. 
\begin{enumerate}
\item Given a formula $\varphi\left(x;b\right)$ and a global $M$-invariant
type $q\supseteq\text{tp}\left(b/M\right)$, we say that $\varphi\left(x;b\right)$
\emph{$k$-Kim-divides over }$M$\emph{ via }$q$ if, whenever $\sequence{b_{i}}{i<\omega}$
is a Morley sequence over $M$ in $q$, then $\set{\varphi\left(x;b_{i}\right)}{i<\omega}$
is $k$-inconsistent. 
\item If $q$ is a global $M$-invariant type with $q\supseteq\text{tp}\left(b/M\right)$,
we say $\varphi\left(x;b\right)$ \emph{Kim-divides over} $M$ \emph{via}
$q$ if $\varphi\left(x;b\right)$ $k$-Kim-divides over $M$ via
$q$ for some $k<\omega$.
\item We say $\varphi\left(x;b\right)$ Kim-divides over $M$ if $\varphi\left(x;b\right)$
Kim-divides over $M$ via $q$ for some global $M$-invariant $q\supseteq\text{tp}\left(b/M\right)$. 
\item We say that $\varphi\left(x;b\right)$ Kim-forks over $M$ if it implies
a finite disjunction of formulas, each Kim-dividing over $M$. 
\item We write $a\ind_{M}^{K}B$ for $\tp\left(a/MB\right)$ does not Kim-fork
(or Kim-independent) over $M$. 
\end{enumerate}
\end{defn}

Note that if $a\ind_{M}^{u}B$ then $a\ind_{M}^{f}B$ (i.e. $\tp\left(a/BM\right)$
does not fork over $M$) which implies $a\ind_{M}^{K}B$. 
\begin{fact}
\cite[Theorem 3.15]{kaplan2017kim} \label{kimslemma} The following
are equivalent for the complete theory $T$: 
\begin{enumerate}
\item $T$ is NSOP$_{1}$. 
\item (Kim's lemma for Kim-dividing) Given any model $M\models T$ and formula
$\varphi\left(x;b\right)$, $\varphi\left(x;b\right)$ Kim-divides
via $q$ for \emph{some} global $M$-invariant $q\supseteq\text{tp}\left(b/M\right)$
if and only if $\varphi\left(x;b\right)$ Kim-divides via $q$ for
\emph{all} global $M$-invariant $q\supseteq\text{tp}\left(b/M\right)$. 
\end{enumerate}
\end{fact}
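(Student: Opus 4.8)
The plan is to prove $(2)\Rightarrow(1)$ by contraposition and $(1)\Rightarrow(2)$ by contradiction; in both directions the bridge is between a failure of Kim's lemma and an $\mathrm{SOP}_1$ configuration as in Fact~\ref{karyversion}. Two elementary facts are used throughout: a Morley sequence over $M$ in an $M$-invariant global type $q$ is $M$-indiscernible, and any two Morley sequences of length $\omega$ over $M$ in $q$ have the same type over $M$ (both realize $q^{\otimes\omega}\restriction M$, by the defining property of $q^{\otimes\omega}$ recorded above); hence whether $\set{\varphi(x;b_i)}{i<\omega}$ is $k$-inconsistent (or consistent) depends only on $q$, not on the chosen Morley sequence. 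We also use Fact~\ref{average}.

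For $(2)\Rightarrow(1)$: assuming $T$ has $\mathrm{SOP}_1$, we produce a model $M$, a formula $\varphi(x;b)$, and $M$-invariant global types $q_0,q_1\supseteq\tp(b/M)$ such that $\varphi(x;b)$ $k$-Kim-divides over $M$ via $q_1$ but does not Kim-divide over $M$ via $q_0$ --- contradicting (2). By Fact~\ref{karyversion}, fix $\varphi(x;y)$, $k$, and $\sequence{\bar c_i}{i\in I}$ with $\bar c_i=(c_{i,0},c_{i,1})$, $c_{i,0}\equiv_{\bar c_{<i}}c_{i,1}$, the $0$-column $\set{\varphi(x;c_{i,0})}{i\in I}$ consistent and the $1$-column $\set{\varphi(x;c_{i,1})}{i\in I}$ $k$-inconsistent. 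By compactness (to lengthen $I$) and Ramsey (to make $\sequence{\bar c_i}{i\in I}$ indiscernible, still with these properties) we may take $I=I_0+\omega$ with $I_0$ a long linear order. Let $M$ be a model containing $\set{\bar c_i}{i\in I_0}$, and replace the appended $\omega$-part by a Morley sequence over $M$ in the global average type $r$ of $\sequence{\bar c_i}{i\in I_0}$ (finitely satisfiable in $\set{\bar c_i}{i\in I_0}$, hence $M$-invariant); this new sequence has the same Ehrenfeucht--Mostowski type over $\set{\bar c_i}{i\in I_0}$ as the one it replaces, so the three properties persist. Property (1) of Fact~\ref{karyversion}, together with a compatible choice of $M$ (with respect to the automorphism witnessing $c_{i^*,0}\equiv_{\set{\bar c_i}{i\in I_0}}c_{i^*,1}$), ensures that the $0$- and $1$-coordinates of the first term of the new sequence realize the same type over $M$; writing $b$ for that type (realized by the $1$-coordinate, say) and $q_0,q_1$ for the pushforwards of $r$ to the $0$- and $1$-coordinates respectively, we get $q_0,q_1\supseteq\tp(b/M)$, and the elementary facts above turn $k$-inconsistency of the $1$-column into $k$-Kim-dividing of $\varphi(x;b)$ over $M$ via $q_1$, and consistency of the $0$-column into non-Kim-dividing over $M$ via $q_0$.

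For $(1)\Rightarrow(2)$: assume $T$ is $\mathrm{NSOP}_1$ and, toward a contradiction, that for some model $M$, formula $\varphi(x;b)$, and $M$-invariant $q_0,q_1\supseteq p:=\tp(b/M)$, $\varphi(x;b)$ $k$-Kim-divides over $M$ via $q_1$ but does not Kim-divide over $M$ via $q_0$. Fix a Morley sequence $\sequence{b_i}{i<\omega}$ in $q_0$ over $M$; it is $M$-indiscernible and $\set{\varphi(x;b_i)}{i<\omega}$ is consistent, and after an $M$-automorphism we may take $b_0=b$. We then build, by induction on $n<\omega$, pairs $\bar c_n=(c_{n,0},c_{n,1})$ with: (i) $\sequence{c_{m,1}}{m\le n}$ a Morley sequence in $q_1$ over $M$; (ii) $\sequence{c_{m,0}}{m\le n}\equiv_M\sequence{b_m}{m\le n}$, so the $0$-column stays a copy of the indiscernible sequence; (iii) $c_{n,0}\equiv_{M\bar c_{<n}}c_{n,1}$. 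Given a length-$\omega$ such sequence, (i) forces $\set{\varphi(x;c_{m,1})}{m<\omega}$ to be $k$-inconsistent, (ii) makes $\set{\varphi(x;c_{m,0})}{m<\omega}$ consistent, and (iii) gives $c_{m,0}\equiv_{\bar c_{<m}}c_{m,1}$, so $\varphi(x;y)$, $k$, $\sequence{\bar c_m}{m<\omega}$ witness $\mathrm{SOP}_1$ by Fact~\ref{karyversion} --- contradicting $\mathrm{NSOP}_1$.

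The main obstacle, and the heart of the theorem, is the inductive step: given $\bar c_{<n}$ one must choose $c_{n,1}$ realizing $q_1$ over $Mc_{<n,1}$ and $c_{n,0}$ continuing the indiscernible $0$-column over $Mc_{<n,0}$ \emph{while keeping the two conjugate over all of $M\bar c_{<n}$} --- i.e.\ one must amalgamate the $q_1$-generic type over $Mc_{<n,1}$ with the indiscernible-continuation type over $Mc_{<n,0}$ into a single type over $Mc_{<n,0}c_{<n,1}$. Crucially, $c_{n,1}$ must \emph{not} be chosen generic over the $0$-column --- otherwise that column would itself be $q_1$-Morley, hence $k$-inconsistent, and (ii) would fail --- so this amalgamation cannot be avoided. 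My plan to carry it out is to first reduce, via Fact~\ref{average}, to the case where $q_0,q_1$ are coheirs of $p$ and to keep the $0$-column a coheir sequence as well, and then to perform the amalgamation using only finite-satisfiability-over-$M$ extension together with the full indiscernibility (not merely consistency) of the $0$-column, arranging the induction so that the required instance of this ``coheir amalgamation over a model'' is always available. Verifying that this goes through in every $\mathrm{NSOP}_1$ theory --- without circularly invoking the independence theorem, which is proved only later, using Kim's lemma --- is the technical core.
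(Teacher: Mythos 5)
First, a point of comparison: the paper does not prove this statement at all --- it is imported as a Fact with a citation to \cite{kaplan2017kim}, so the only proof to measure yours against is the one in that reference. That proof of the hard direction (NSOP$_{1}$ implies Kim's lemma) does \emph{not} build the array of Fact \ref{karyversion} by a direct induction on pairs. It constructs very wide ``spread out'' trees whose paths are Morley sequences in the relevant invariant type, extracts s-indiscernible and then strongly indiscernible trees by an Erd\H{o}s--Rado/modeling-property argument, and finally reads the array off a path together with its off-path siblings; the crucial condition $c_{i,0}\equiv_{\bar{c}_{<i}}c_{i,1}$ there comes for free from tree indiscernibility, not from any amalgamation of types over the model $M$. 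That machinery exists precisely because the obstacle you isolate has no direct solution.

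This is where your proposal has a genuine gap: in the direction (1)$\implies$(2) you reduce the problem to the inductive step of choosing $c_{n,1}\models q_{1}|_{Mc_{<n,1}}$ and $c_{n,0}$ continuing the $q_{0}$-Morley column while keeping $c_{n,0}\equiv_{M\bar{c}_{<n}}c_{n,1}$, and you then only state a \emph{plan} to carry this out ``using finite-satisfiability-over-$M$ extension together with indiscernibility.'' No such coheir-amalgamation principle is available at this stage of the theory: amalgamating a $q_{1}$-generic extension over one column with a prescribed extension over the other column into a single type over $M\bar{c}_{<n}$ is essentially a weak independence theorem, and (as you note) the independence theorem is proved only downstream of Kim's lemma; nothing about NSOP$_{1}$ gives it directly, which is exactly why the original argument changes strategy and goes through trees. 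So what you have is a correct identification of the hard core, not a proof of it. A secondary, smaller gap is in (2)$\implies$(1): you need the two coordinate pushforwards of the average type of the pairs to agree over $M$, i.e.\ $\operatorname{tp}\left(c_{0,0}/M\right)=\operatorname{tp}\left(c_{0,1}/M\right)$ for the first new pair, and the row-condition $c_{i,0}\equiv_{\bar{c}_{<i}}c_{i,1}$ only controls formulas with parameters among \emph{earlier rows}, not over all of $M$; your appeal to ``a compatible choice of $M$'' does not establish this. This point is repairable (for instance, Skolemize as in the proof of (2)$\implies$(1) of Theorem \ref{mainthm(1)-(6)}, take $M$ to be the Skolem hull of the initial segment, and use an ultrafilter concentrating on end segments, so that for each fixed parameter tuple the two coordinates agree on a set in the ultrafilter), but as written it is asserted rather than proved.
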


From this it easily follows that Kim-forking is equal to Kim-dividing
\cite[Proposition 3.19]{kaplan2017kim}. The notion of Kim independence,
denoted by $\ind^{K}$, satisfies many nice properties which turn
out to be equivalent to NSOP$_{1}$. 
\begin{fact}
\cite[Theorem 8.1]{kaplan2017kim} The following are equivalent for
the complete theory $T$: 
\begin{enumerate}
\item $T$ is NSOP$_{1}$.
\item Symmetry of Kim independence over models: $a\ind_{M}^{K}b$ iff $b\ind_{M}^{K}a$
for any $M\models T$.
\item Independence theorem over models: if $A\ind_{M}B$, $c\ind_{M}A$,
$c'\ind_{M}B$ and $c\equiv_{M}c'$ then there is some $c''\ind_{M}AB$
such that $c''\equiv_{MA}c$ and $c''\equiv c_{MB}'$.
\end{enumerate}
\end{fact}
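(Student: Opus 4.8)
The plan is to establish the equivalences by proving (1)$\Rightarrow$(3), (1)$\Rightarrow$(2), and then (3)$\Rightarrow$(1) and (2)$\Rightarrow$(1) contrapositively. Throughout, once $T$ is assumed NSOP$_{1}$ I would use the structural consequences of Kim's lemma (Fact~\ref{kimslemma}): Kim-dividing over a model does not depend on the witnessing invariant type, Kim-forking over a model equals Kim-dividing, and hence $\ind_{M}^{K}$ has finite character, monotonicity, and extension over models. I would also use Fact~\ref{average} freely, so that every type over a model extends to a global coheir and coheir (hence Morley) sequences are plentiful, and that a Morley sequence in an invariant type is indiscernible over its base.

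The technical heart is (1)$\Rightarrow$(3), and it is also where I expect the main obstacle. Fix $M\models T$ with $a\ind_{M}^{K}b$, $c_{1}\ind_{M}^{K}a$, $c_{2}\ind_{M}^{K}b$ and $c_{1}\equiv_{M}c_{2}$. First, after replacing $b$ by an $Ma$-conjugate if necessary (using $a\ind_{M}^{K}b$ and Fact~\ref{average}), I would fix a coheir Morley sequence $\sequence{b_{j}}{j<\omega}$ over $M$ with $b_{0}=b$ that is indiscernible over $a$, and symmetrically a coheir Morley sequence $\sequence{a_{i}}{i<\omega}$ over $M$ with $a_{0}=a$. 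Since $\tp(c_{1}/Ma)$ and $\tp(c_{2}/Mb)$ do not Kim-divide over $M$, I would transfer $c_{1}$ and $c_{2}$ along these sequences to obtain coherent families $c_{1}^{i}\equiv_{Ma_{i}}c_{1}$ and $c_{2}^{j}\equiv_{Mb_{j}}c_{2}$ (a ``chain condition'' asserting that a type over a model that does not Kim-divide is realized simultaneously along a Morley sequence of its base). I would then consider $\Gamma(x)=\tp(c_{1}/Ma)\cup\tp(c_{2}/Mb)$ and claim it is consistent: were it inconsistent, compactness together with the indiscernibility of the two sequences and the independence $a\ind_{M}^{K}b$ should extract precisely a configuration $\sequence{\bar{c}_{i}}{i\in I}$ as in Fact~\ref{karyversion} --- a formula consistent along the ``$c_{1}$-side'' but $k$-inconsistent along the ``$c_{2}$-side'', with the required conjugacy of $c_{i,0}$ and $c_{i,1}$ over initial segments --- contradicting NSOP$_{1}$. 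A realization $c_{*}$ of $\Gamma$ then satisfies $c_{*}\equiv_{Ma}c_{1}$ and $c_{*}\equiv_{Mb}c_{2}$, and $c_{*}\ind_{M}^{K}ab$ is checked by presenting the array just built as a Morley witness via Fact~\ref{kimslemma}. The delicate point, and the only place NSOP$_{1}$ genuinely enters, is engineering the array so that the failure of $\Gamma$ yields an \emph{honest} SOP$_{1}$ pattern (in particular with matching ``$\equiv$-over-the-past'' conditions) rather than something weaker.

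Given (3), I would derive (1)$\Rightarrow$(2) by bootstrapping. Suppose $a\ind_{M}^{K}b$ but $b\nind_{M}^{K}a$; then some $\psi(y;a)\in\tp(b/Ma)$ Kim-divides over $M$, so that for a coheir Morley sequence $\sequence{a_{i}}{i<\omega}$ over $M$ with $a_{0}=a$ the set $\set{\psi(y;a_{i})}{i<\omega}$ is $k$-inconsistent for some $k$. On the other hand, $a\ind_{M}^{K}b$ lets one choose $b_{i}\equiv_{Ma_{i}}b$ with enough mutual independence to apply the independence theorem (3) finitely many times and amalgamate $k$ of the $b_{i}$ into a single $b_{*}$ realizing $\psi(y;a_{i})$ for $k$ distinct indices $i$ --- contradicting the $k$-inconsistency. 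The delicate bookkeeping here is to preserve, along the induction, the independence hypotheses needed to invoke (3) at each step.

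Finally, I would prove (3)$\Rightarrow$(1) and (2)$\Rightarrow$(1) contrapositively from a single SOP$_{1}$ pattern. Assuming $T$ is not NSOP$_{1}$, take $\varphi(x;y)$, $k$ and $\sequence{\bar{c}_{i}}{i\in I}$ as in the negation of Fact~\ref{karyversion}, with $I$ long (by compactness) and, after extraction, indiscernible over a model $M$ containing the base. For (3)$\Rightarrow$(1): the relation $c_{i,0}\equiv_{\bar{c}_{<i}}c_{i,1}$ is exactly what turns the two sides of the configuration into an instance of the independence theorem (after arranging the ambient independences, again using Fact~\ref{average} and the mutual independence of coheir sequences), whose hypothetical solution would have to satisfy simultaneously the consistency coming from the $c_{i,0}$'s and the $k$-inconsistency coming from the $c_{i,1}$'s, which is absurd. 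For (2)$\Rightarrow$(1): reading the same configuration over a suitable base model produces $a$, $b$ with $a\ind_{M}^{K}b$ --- one side being, up to conjugacy, a coheir --- while a formula of $\tp(b/Ma)$ Kim-divides over $M$ via the invariant type carrying the other side, so $b\nind_{M}^{K}a$ and $\ind_{M}^{K}$ fails symmetry.
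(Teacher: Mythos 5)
This statement is quoted in the paper as a black-box Fact (cited to \cite[Theorem 8.1]{kaplan2017kim}); the paper contains no proof of it, and the result you are trying to reprove is in fact the main technical content of that reference, so a page-long sketch was never going to suffice. More importantly, your proposed decomposition has two concrete gaps. First, your route proves the independence theorem (3) \emph{before} symmetry and then tries to bootstrap (2) from (3); but in the known proof the order is the reverse: symmetry is established first, by a genuinely involved argument with trees (strongly indiscernible trees, tree Morley sequences), and the independence theorem is then proved \emph{using} symmetry, extension, and the chain condition. Your sketch of (1)$\Rightarrow$(3) waves away exactly the step where this machinery is needed: the extraction of an SOP$_1$ array from the inconsistency of $\Gamma=\tp(c_{1}/Ma)\cup\tp(c_{2}/Mb)$ works when the hypotheses are coheir-independence (finite satisfiability is what produces the conditions $c_{i,0}\equiv_{\bar c_{<i}}c_{i,1}$, as in Chernikov--Ramsey), but with only Kim-independence hypotheses there is no such mechanism, and no argument of the shape you describe is known to close this.

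Second, the step (3)$\Rightarrow$(2) is circular as written. To amalgamate the realizations $b_{i}\models\psi(y;a_{i})$ via (3) you must verify hypotheses of the form $b_{i}\ind_{M}^{K}a_{i}$; what you actually have from $a\ind_{M}^{K}b$ is $a_{i}\ind_{M}^{K}b_{i}$, and converting one into the other is precisely the symmetry you are trying to prove. Worse, if $\psi(y;a)$ Kim-divides over $M$ (your reductio hypothesis), then \emph{no} realization of $\psi(y;a_{i})$ can satisfy $b_{i}\ind_{M}^{K}a_{i}$, so the required independence hypotheses are not merely unverified but false, and the induction cannot start. Your sketches of the converse directions (2)$\Rightarrow$(1) and (3)$\Rightarrow$(1) do have the right shape (an SOP$_1$ array yields $a\ind_{M}^{u}b$, hence $a\ind_{M}^{K}b$, on one side while a formula of $\tp(b/Ma)$ Kim-divides over $M$ via the invariant type carrying the other side, and the two halves of a putative amalgam are jointly inconsistent), but the forward directions, which are the heart of the theorem, are not established by your outline.
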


\begin{fact}
\label{fact:Kim Morley is consistent}\cite[Lemma 7.6]{kaplan2017kim}Suppose
that $T$ is NSOP$_{1}$ and that $\sequence{a_{i}}{i<\omega}$ is
an $\ind^{K}$-Morley sequence over $M$ in the sense that $a_{i}\ind_{M}^{K}a_{<i}$
and the sequence is indiscernible. Then if $\varphi\left(x,a_{0}\right)$
does not Kim-divide over $M$, then $\set{\varphi\left(x,a_{i}\right)}{i<\omega}$
does not Kim-divide over $M$, and in particular it is consistent. 
\end{fact}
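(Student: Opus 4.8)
The plan is to reduce the statement to a single construction run by induction on finite initial segments of $\sequence{a_{i}}{i<\omega}$, with the independence theorem as the only substantial input. Recall from the discussion above that in an NSOP$_{1}$ theory Kim-forking coincides with Kim-dividing and has finite character, so that ``$\set{\varphi\left(x,a_{i}\right)}{i<\omega}$ does not Kim-divide over $M$'' means precisely that for every $n$ the formula $\psi_{n}\left(x\right):=\bigwedge_{i<n}\varphi\left(x,a_{i}\right)$ does not Kim-fork over $M$; moreover $\psi_{n}$ does not Kim-fork over $M$ as soon as it is realised by some $c_{n}$ with $c_{n}\ind_{M}^{K}a_{0}\dots a_{n-1}$, since a partial type not Kim-forking over $M$ extends to a complete one, and consistency of $\set{\varphi\left(x,a_{i}\right)}{i<\omega}$ then follows by compactness from consistency of the individual $\psi_{n}$. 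So it suffices to prove, by induction on $n\geq 1$, that there is $c_{n}\models\psi_{n}$ with $c_{n}\ind_{M}^{K}a_{0}\dots a_{n-1}$.

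The case $n=1$ holds because $\varphi\left(x,a_{0}\right)$ does not Kim-divide, hence does not Kim-fork, over $M$. For the step from $n$ to $n+1$, let $c_{n}$ be as given. The Morley condition $a_{n}\ind_{M}^{K}a_{0}\dots a_{n-1}$ together with symmetry of Kim-independence (valid since $T$ is NSOP$_{1}$) gives $a_{0}\dots a_{n-1}\ind_{M}^{K}a_{n}$. As $\sequence{a_{i}}{i<\omega}$ is indiscernible over $M$ we have $a_{0}\equiv_{M}a_{n}$, so fix $\rho\in\Aut\left(\Mm/M\right)$ with $\rho\left(a_{0}\right)=a_{n}$ and set $d:=\rho\left(c_{n}\right)$; then $d\equiv_{M}c_{n}$, and $d\models\varphi\left(x,a_{n}\right)$ because $c_{n}\models\varphi\left(x,a_{0}\right)$, while $d\ind_{M}^{K}a_{n}$ because $c_{n}\ind_{M}^{K}a_{0}$ (which follows from $c_{n}\ind_{M}^{K}a_{0}\dots a_{n-1}$ by right monotonicity). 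Now apply the independence theorem over $M$ with $A=a_{0}\dots a_{n-1}$, $B=a_{n}$, $c=c_{n}$, $c'=d$: the hypotheses $A\ind_{M}^{K}B$, $c\ind_{M}^{K}A$, $c'\ind_{M}^{K}B$ and $c\equiv_{M}c'$ all hold, so there is $c''\ind_{M}^{K}a_{0}\dots a_{n}$ with $c''\equiv_{Ma_{0}\dots a_{n-1}}c_{n}$ and $c''\equiv_{Ma_{n}}d$. The first conjugacy gives $c''\models\psi_{n}$ and the second gives $c''\models\varphi\left(x,a_{n}\right)$, so $c''\models\psi_{n+1}$; take $c_{n+1}:=c''$. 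This completes the induction, and with the reduction of the first paragraph the statement follows.

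The one delicate point is the choice of split for the independence theorem: one must amalgamate the \emph{initial segment} $a_{0}\dots a_{n-1}$ against the \emph{single new node} $a_{n}$, so that the needed independence $a_{0}\dots a_{n-1}\ind_{M}^{K}a_{n}$ is exactly the Morley condition read through symmetry. The naive alternative of peeling off $a_{0}$ and amalgamating against $a_{1}\dots a_{n}$ would instead demand $a_{0}\ind_{M}^{K}a_{1}\dots a_{n}$ -- independence of an element of a Morley sequence from \emph{later} elements -- which can genuinely fail in a non-simple NSOP$_{1}$ theory because base monotonicity fails there; recognising that the right bookkeeping sidesteps this is the main point. Note that both symmetry and the independence theorem are used, and each is equivalent to NSOP$_{1}$, so the hypothesis is genuinely in play.
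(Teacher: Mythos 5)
Your proof is correct: the induction via symmetry and the independence theorem, amalgamating the initial segment $a_{0}\dots a_{n-1}$ against the single new element $a_{n}$ (so that the needed hypothesis is exactly the Morley condition read through symmetry), together with the standard NSOP$_{1}$ facts that Kim-forking equals Kim-dividing and that a non-Kim-forking partial type extends to a complete non-Kim-forking type over the same domain, yields the statement. Note that this paper gives no proof of its own here—the statement is imported as a Fact from \cite{kaplan2017kim} (Lemma 7.6)—and your argument is essentially the proof given in that reference, so there is nothing substantive to contrast.
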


\subsection{The generalized club filter}
\begin{defn}
\label{clubdef} Let $\kappa$ be a cardinal and $X$ a set with $\left|X\right|\geq\kappa$.
We write $\left[X\right]^{\kappa}$ to denote $\set{Y\subseteq X}{\left|Y\right|=\kappa}$. 
\begin{enumerate}
\item A set $C\subseteq\left[X\right]^{\kappa}$ is \emph{unbounded} if
for every $Y\in\left[X\right]^{\kappa}$, there is some $Z\in C$
with $Y\subseteq Z$. 
\item A set $C\subseteq\left[X\right]^{\kappa}$ is \emph{closed} if, whenever
$\sequence{Y_{i}}{i<\alpha\leq\kappa}$ is a chain in $C$, i.e. each
$Y_{i}\in C$ and $i<j<\alpha$ implies $Y_{i}\subseteq Y_{j}$, then
$\bigcup_{i<\alpha}Y_{i}\in C$. 
\item A set $C\subseteq\left[X\right]^{\kappa}$ is \emph{club} if it is
closed and unbounded. 
\item A set $S\subseteq\left[X\right]^{\kappa}$ is \emph{stationary} if
$S\cap C\neq\emptyset$ for every club $C\subseteq\left[X\right]^{\kappa}$. 
\end{enumerate}
The \emph{club filter} on $\left[X\right]^{\kappa}$ is the filter
generated by the clubs. If $\left|X\right|=\kappa$, then the club
filter on $\left[X\right]^{\kappa}$ is the principal ultrafilter
consisting of subsets of $\left[X\right]^{\kappa}$ containing $X$. 
\end{defn}

\begin{example}
\label{exa:elementary substructures}If $M$ is an $L$-structure
of size $\geq\kappa\geq\left|L\right|$, then the collection of elementary
substructures of $M$ of size $\kappa$ is a club in $\left[M\right]^{\kappa}$. 
\end{example}

\begin{rem}
In the literature, e.g. \cite[Definition 8.21]{jech2013set}, the
above definitions are given instead for subsets of $\mathcal{P}_{\kappa^{+}}\left(X\right)=\set{Y\subseteq X}{\left|Y\right|<\kappa^{+}}$
but note that $\left[X\right]^{\kappa}$ is a club subset of $\mathcal{P}_{\kappa^{+}}\left(X\right)$,
hence all definitions relativize to this set in the natural way. 
\end{rem}

\begin{fact}
\label{clubfacts} Let $\kappa$ be a cardinal and $X$ a set with
$\left|X\right|\geq\kappa^{+}$. 
\begin{enumerate}
\item The club filter on $\left[X\right]^{\kappa}$ is $\kappa^{+}$-complete
\cite[Theorem 8.22]{jech2013set}. 
\item For every club $C\subseteq\left[X\right]^{\kappa}$, there is a collection
of finitary functions $\overline{f}=\sequence{f_{i}}{i<\kappa}$ with
$f_{i}:X^{n_{i}}\to X$ such that $C_{\overline{f}}:=\set{Y\in\left[X\right]^{\kappa}}{f_{i}\left(Y^{n_{i}}\right)\subseteq Y\text{ for all }i<\kappa}$
is contained in $C$. Equivalently, there is a function $F:X^{<\omega}\to\left[X\right]^{\kappa}$
such that $C_{F}\subseteq C$ \cite[Lemma 8.26]{jech2013set}. 
\item Conversely, given a collection of finitary functions $\overline{f}=\sequence{f_{i}}{i<\kappa}$
with $f_{i}:X^{n_{i}}\to X$, the set $C_{\overline{f}}$ is club
in $\left[X\right]^{\kappa}$. 
\item When $\kappa=\omega$, for any club $C\subseteq\left[X\right]^{\kappa}$,
there is a function $F:X^{<\omega}\to X$ such that $C_{F}\subseteq C$
\cite[Theorem 8.28]{jech2013set}. 
\end{enumerate}
\end{fact}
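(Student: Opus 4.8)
I would establish the four clauses in the order (3), (4), (2), (1), deducing (1) from (2) and (3). Throughout one may assume that $\kappa$ is infinite and $|X|\geq\kappa^{+}$: if $|X|=\kappa$ the club filter is the principal ultrafilter at $X$ and all four clauses are immediate, and if $\kappa$ is finite no two distinct members of $[X]^{\kappa}$ are $\subseteq$-comparable, so again the club filter is principal. For (3): closedness of $C_{\overline{f}}$ is immediate, since any finite tuple drawn from the union of a $\subseteq$-chain already lies in one member of the chain, and the union of a nonempty chain in $[X]^{\kappa}$ of length $\leq\kappa$ is again in $[X]^{\kappa}$; for unboundedness, given $Z_{0}\in[X]^{\kappa}$ one iterates $Z_{n+1}=Z_{n}\cup\bigcup_{i<\kappa}f_{i}(Z_{n}^{n_{i}})$ for $n<\omega$, checking that each $Z_{n}$, hence $Z=\bigcup_{n}Z_{n}$, stays of size $\kappa$, and that $Z\in C_{\overline{f}}$.

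For (4), the plan is the standard Skolem-hull encoding. First, using that $C\subseteq[X]^{\omega}$ is unbounded even above countable subsets of $X$ (any such set extends to a member of $[X]^{\omega}$, which lies below a member of $C$), I would build by recursion on $|a|$ a $\subseteq$-monotone assignment $a\mapsto c(a)\in C$ on finite $a\subseteq X$ with $a\subseteq c(a)$: at stage $a$, choose $c(a)\in C$ containing the countable set $a\cup\bigcup\set{c(b)}{b\subsetneq a}$. Enumerate each $c(a)=\sequence{c(a)_{m}}{m<\omega}$ so that $c(a)_{m}\in a$ for $m<|a|$, and set $F(s)=c(\operatorname{ran}(s))_{\len(s)}$. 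Since any finite $a\subseteq X$ occurs as $\operatorname{ran}(s)$ for sequences $s$ of every length $\geq|a|$ (pad with repetitions), any $Y$ closed under $F$ satisfies $c(a)\subseteq Y$ for every finite $a\subseteq Y$. Finally, for $Y\in C_{F}$, exhausting $Y$ by an increasing sequence of finite sets $a_{0}\subseteq a_{1}\subseteq\cdots$ makes $\sequence{c(a_{n})}{n<\omega}$ an increasing $\omega$-chain in $C$ with union $Y$, so $Y\in C$ by closedness of $C$; and $C_{F}$ is club by (3).

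For (2): after disposing of the routine equivalence between a single $F:X^{<\omega}\to[X]^{\kappa}$ and a family $\sequence{f_{i}}{i<\kappa}$ of finitary functions (enumerate the values $F(s)$ by $\kappa$; conversely collect $\set{f_{i}(\overline{b})}{i<\kappa,\ \overline{b}\in\operatorname{ran}(s)^{n_{i}}}$), one again builds a $\subseteq$-monotone $c:[X]^{<\omega}\to C$ with $a\subseteq c(a)$ and takes $F(s)=c(\operatorname{ran}(s))$, so that every $Y\in C_{F}$ is the union of the directed family $\set{c(a)}{a\in[Y]^{<\omega}}$ of members of $C$, each contained in $Y$. \textbf{This is the step I expect to be the main obstacle.} For $\kappa>\omega$ a club in $[X]^{\kappa}$ is closed only under unions of chains of length $\leq\kappa$, and $[Y]^{<\omega}$ carries no cofinal chain once $|Y|$ is uncountable, so --- unlike in (4) --- one cannot conclude directly that this directed union lies in $C$; and when $|X|>\kappa^{+}$ the image of $c$ cannot even be arranged to be a single chain, since no increasing chain of $\kappa$-sized sets has union $X$. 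Allowing $\kappa$ many functions rather than one is precisely what lets one add enough closure constraints that the sets $c(a)$, $a\in[Y]^{<\omega}$, can be re-indexed as an increasing chain of length $\leq\kappa$ inside $C$ with union $Y$; I would carry out this bookkeeping following \cite[Lemma 8.26]{jech2013set}.

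Finally, (1). The club filter is a genuine filter: it contains the club $[X]^{\kappa}$, is closed upward by definition, and closed under finite intersections by (2) and (3). It remains to show closure under intersections of $\leq\kappa$ of its members. Given $\sequence{A_{\beta}}{\beta<\lambda}$ with $\lambda\leq\kappa$ and $A_{\beta}\supseteq C_{\beta}$ for clubs $C_{\beta}$, apply (2) to obtain families $\overline{f}^{\beta}$ of $\kappa$ finitary functions with $C_{\overline{f}^{\beta}}\subseteq C_{\beta}$, amalgamate them into one family $\overline{f}$ of size $\leq\lambda\cdot\kappa=\kappa$ (padding with trivial functions if necessary), and note that imposing more closure conditions only shrinks the set, so $C_{\overline{f}}\subseteq C_{\overline{f}^{\beta}}\subseteq A_{\beta}$ for every $\beta<\lambda$. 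By (3), $C_{\overline{f}}$ is a club, so $\bigcap_{\beta<\lambda}A_{\beta}\supseteq C_{\overline{f}}$ lies in the club filter; in particular the filter is proper, since clubs are unbounded and $|X|\geq\kappa$.
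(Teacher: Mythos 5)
The paper offers no proof of Fact \ref{clubfacts} at all --- it is quoted directly from \cite{jech2013set} --- so there is no internal argument to compare yours against; what you wrote is essentially a reconstruction of the standard proofs behind the citation. Your arguments for (3) and (4) are correct (the $\omega$-step closure iteration for unboundedness, and the Kueker-style coding $F(s)=c(\operatorname{ran}(s))_{\len(s)}$ with the first $\left|a\right|$ coordinates of the enumeration of $c(a)$ taken inside $a$), and so is your deduction of (1) from (2) and (3) by amalgamating $\leq\kappa$ families of $\kappa$ finitary functions.

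The only incomplete point is the one you flag yourself in (2), and it is closed by a single lemma rather than elaborate bookkeeping: a club $C\subseteq\left[X\right]^{\kappa}$, being closed under chains of length $\leq\kappa$, is in fact closed under unions of nonempty $\subseteq$-directed subfamilies $D\subseteq C$ with $\left|D\right|\leq\kappa$ (such a union automatically has size exactly $\kappa$, since every member of $C$ does). Prove this by induction on $\lambda=\left|D\right|$: for finite $D$ take its maximum; for countable infinite $D$ extract a cofinal $\omega$-chain; for uncountable $\lambda$ fix an enumeration $D=\set{d_{\xi}}{\xi<\lambda}$ and a binary upper-bound choice function on $D$, let $D_{\alpha}$ (for $0<\alpha<\lambda$) be the closure of $\set{d_{\xi}}{\xi<\alpha}$ under it, so that each $D_{\alpha}$ is directed of size $<\lambda$ and the $D_{\alpha}$ increase with $\alpha$; then $\sequence{\bigcup D_{\alpha}}{0<\alpha<\lambda}$ is, by the induction hypothesis, an increasing chain in $C$ of length $\leq\kappa$ with union $\bigcup D$, which therefore lies in $C$. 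With this lemma, your directed family $\set{c(a)}{a\in\left[Y\right]^{<\omega}}$ (directed because $c$ is monotone) has union $Y$, so $Y\in C$ for every $Y\in C_{F}$; this is precisely the content of \cite[Lemma 8.26]{jech2013set}, and your proof of (2), hence of the whole Fact, becomes self-contained. One peripheral slip: for finite $\kappa$ clause (3) is not immediate but generally false, since a $\kappa$-element set need not extend to a $\kappa$-element set closed under the $f_{i}$ (take $\kappa=1$, $X=\omega$, $f(x)=x+1$); simply assume $\kappa$ infinite, as it is everywhere the Fact is used in the paper.
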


We leave the proof of the next lemma to the reader.
\begin{lem}
\label{niceclubs} Suppose $\lambda$ is a cardinal, $X$ is a set
with $\left|X\right|=\lambda^{+}$, and $\sequence{Y_{\alpha}}{\alpha<\lambda^{+}}$
is an increasing continuous sequence of sets of cardinality $\lambda$
with union $X$. Then $\set{Y_{\alpha}}{\alpha<\lambda^{+}}$ is a
club of $\left[X\right]^{\lambda}$. In particular, if $X=\lambda^{+}$
and $C\subseteq\lambda^{+}\setminus\lambda$ is a club of $\lambda^{+}$,
then $C$ is a club of $\left[X\right]^{\lambda}$.
\end{lem}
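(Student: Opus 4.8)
The plan is to check directly that $C:=\set{Y_\alpha}{\alpha<\lambda^+}$ is closed and unbounded in $\left[X\right]^\lambda$, the only real ingredient being regularity of $\lambda^+$; the ``in particular'' clause will then be an immediate specialization of the first assertion. Note at the outset that $C\subseteq\left[X\right]^\lambda$, since each $Y_\alpha$ is a subset of $X=\bigcup_{\alpha<\lambda^+}Y_\alpha$ of cardinality $\lambda$.

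For unboundedness, given $W\in\left[X\right]^\lambda$ I would cover each of the $\lambda$ elements of $W$ by some term $Y_{\alpha_w}$ of the sequence, put $\alpha:=\sup_{w\in W}\alpha_w$, observe that $\alpha<\lambda^+$ by regularity, and conclude $W\subseteq Y_\alpha\in C$ by monotonicity of the sequence. For closedness, given a $\subseteq$-increasing chain $\sequence{Z_i}{i<\beta}$ in $C$ with $\beta\le\lambda$, I would set $A:=\set{\alpha<\lambda^+}{Y_\alpha\in\set{Z_i}{i<\beta}}$, argue that $\gamma:=\sup A<\lambda^+$, and then identify $\bigcup_{i<\beta}Z_i$ with $Y_\gamma$: directly when $\gamma\in A$, and using continuity of the sequence when $\gamma$ is a limit of $A$.

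For the last sentence, with $X=\lambda^+$ and $C\subseteq\lambda^+\setminus\lambda$ a club of the ordinal $\lambda^+$, I would enumerate $C$ increasingly as $\sequence{c_\alpha}{\alpha<\lambda^+}$ and verify that it is an increasing continuous sequence of sets of cardinality $\lambda$ with union $X$: increasing is trivial, continuity at limits is precisely closedness of $C$ in $\lambda^+$ (together with regularity), every term has cardinality $\lambda$ because $\lambda\le c_\alpha<\lambda^+$ --- this is the one place the hypothesis $C\subseteq\lambda^+\setminus\lambda$ is used --- and $\bigcup_{\alpha<\lambda^+}c_\alpha=\sup C=\lambda^+=X$ since $C$ is unbounded in $\lambda^+$. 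The first part then applies verbatim.

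There is no genuine obstacle here: this is a routine fact about the generalized club filter, which is why it is left to the reader. The only subtlety is that an increasing continuous sequence need not be strictly increasing, so in the closedness argument the indices $\alpha$ with $Y_\alpha$ in the given chain cannot simply be taken to increase with the chain. This is handled by the observation that, because $\left|X\right|=\lambda^+>\lambda$, no single value is attained by the sequence on an unbounded subset of $\lambda^+$ --- were $Y_\alpha=Y$ for unboundedly many $\alpha$, then $Y_\beta\subseteq Y$ for all $\beta<\lambda^+$ and hence $X=Y$ --- and this is exactly what keeps $\left|A\right|\le\lambda$, so that $\sup A<\lambda^+$.
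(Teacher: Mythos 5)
Your proof is correct; the paper explicitly leaves this lemma to the reader, and your argument is the standard verification it has in mind. You also correctly isolate and handle the one genuine subtlety --- bounding the index set $A$ when the sequence $\sequence{Y_{\alpha}}{\alpha<\lambda^{+}}$ is not strictly increasing, via the observation that no value can recur unboundedly often --- so there is nothing to add.
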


\section{\label{sec:Pierre's proof} proof of Theorem \ref{thm:Main-intro}}

\subsection{A short proof of (1) implies (4) in Theorem \ref{thm:Main-intro}
using heirs}

Here we give a short proof of (1) implies (4) in Theorem \ref{thm:Main-intro},
due to Pierre Simon. We thank him for allowing us to include this
proof. 
\begin{lem}
\label{getting heirs} Suppose $p\left(x\right)\in S\left(M\right)$,
$M\models T$. Then the set of $N\prec M$ such that $\left|N\right|=\left|T\right|$
and $p$ is an heir of $p|_{N}$ is a club subset of $\left[M\right]^{\left|T\right|}$. 
\end{lem}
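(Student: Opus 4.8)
The plan is to exhibit an explicit club of ``closure points'' contained in the set in question, and then to check separately that the set itself is closed; since a closed superset of a club is a club, this will finish the proof. Write $S=\set{N\prec M}{\left|N\right|=\left|T\right| \text{ and } p \text{ is an heir of } p|_{N}}$. First I would dispose of the degenerate case $\left|M\right|=\left|T\right|$, in which $\left[M\right]^{\left|T\right|}=\{M\}$, the club filter is the principal ultrafilter containing $M$, and $p$ is trivially an heir of $p|_{M}=p$; so from now on assume $\left|M\right|\geq\left|T\right|^{+}$, as required by Fact~\ref{clubfacts} and Example~\ref{exa:elementary substructures}.

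The main construction is a family of witness functions. Fix $m_{0}\in M$. For each $L$-formula $\psi\left(x;y,z\right)$, where $z$ is a finite tuple of parameter variables, and each tuple $a$ from $M$ of length $\left|z\right|$, let $f_{\psi}\left(a\right)$ be some tuple $b$ from $M$ of length $\left|y\right|$ with $\psi\left(x;b,a\right)\in p$ if such a $b$ exists, and the constant tuple of $m_{0}$'s otherwise. Breaking each $f_{\psi}$ into its $\left|y\right|$ coordinate functions and padding if necessary, I obtain a family $\overline{f}$ of exactly $\left|T\right|$ finitary functions $M^{n_{i}}\to M$, so by Fact~\ref{clubfacts}(3) the set $C_{\overline{f}}$ is a club of $\left[M\right]^{\left|T\right|}$. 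Intersecting with the club of elementary substructures of $M$ of size $\left|T\right|$ (Example~\ref{exa:elementary substructures}) gives a club $C$.

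Next I would verify $C\subseteq S$. Given $N\in C$ and $\varphi\left(x;y\right)\in L\left(N\right)$, write $\varphi\left(x;y\right)=\psi\left(x;y,a\right)$ with $\psi\in L$ and $a$ a tuple from $N$; if $\varphi\left(x;b\right)\in p$ for some $b\in M$, then by construction $f_{\psi}\left(a\right)$ is a witness $b'$ with $\psi\left(x;b',a\right)\in p$, and closure of $N$ under $f_{\psi}$ forces $b'\in N$ — which is exactly the heir condition. The conceptual point that makes a single-valued witness function adequate is that ``$p$ is an heir of $p|_{N}$'' only requires, for each $\psi$ and each $a$ from $N$, \emph{one} witness $b'\in N$ with $\psi\left(x;b',a\right)\in p$, rather than a witness in $N$ for every relevant $b\in M$; this is the subtlety one has to notice, and it is the only real idea in the argument.

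Finally I would check that $S$ is closed, which is not automatic merely from $C\subseteq S$. Let $\sequence{N_{i}}{i<\alpha}$ be a chain in $S$ with $\alpha\leq\left|T\right|$ a limit ordinal, and set $N=\bigcup_{i<\alpha}N_{i}$. Then $N\prec M$ as the union of an elementary chain and $\left|N\right|=\left|T\right|$; and if $\varphi\left(x;y\right)=\psi\left(x;y,a\right)\in L\left(N\right)$, then $a$, being finite, is a tuple from some $N_{i}$, so a witness in $N_{i}\subseteq N$ exists by the heir property of $p$ over $N_{i}$. Hence $N\in S$. Being closed and containing the club $C$, $S$ is a club of $\left[M\right]^{\left|T\right|}$. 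I do not anticipate a genuine obstacle; beyond the observation above the only care needed is routine bookkeeping with coordinate functions, tuple lengths, and remembering to argue the closure of $S$ in addition to unboundedness.
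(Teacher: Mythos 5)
Your proof is correct and is essentially the paper's argument in a different dress: the paper obtains your club $C$ by taking elementary substructures of the expansion $M_{p}$ of $M$ by predicates $R_{\varphi}\left(y\right)$ naming $\set{b}{\varphi\left(x;b\right)\in p}$, which encodes exactly the same witness-closure condition as your Skolem-style functions $f_{\psi}$, and it likewise notes separately that the set is closed under increasing unions. No gap.
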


\begin{proof}
It is easy to verify that this set is closed under increasing unions,
so it is enough to show that it contains a club. 

Consider the $L_{p}$-structure $M_{p}$ expanding $M$ by forcing
$p$ to be definable \textemdash{} i.e. for every $L$-formula $\varphi\left(x;y\right)$
add a relation $R_{\varphi}\left(y\right)$ interpreted as $\set{b\in M^{\left|y\right|}}{\varphi\left(x,b\right)\in p}$.
Note that $\left|L_{p}\right|=\left|L\right|$. Then if $N'\prec M_{p}$
then its $L$-reduct $N$ is such that $p$ is an heir of $p|_{N}$.
 Thus we are done by Example \ref{exa:elementary substructures}.
\end{proof}
\begin{thm}
Suppose $T$ is NSOP$_{1}$. If $M\models T$ and $p\in S\left(M\right)$,
then the set of elementary substructures $N\prec M$ with $\left|N\right|=\left|T\right|$
such that $p$ does not Kim-divide over $N$ contains a club. 
\end{thm}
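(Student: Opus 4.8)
The plan is to reduce the statement immediately to Lemma~\ref{getting heirs}. By that lemma, the set $C$ of $N\prec M$ with $\left|N\right|=\left|T\right|$ such that $p$ is an heir of $p|_{N}$ is a club of $\left[M\right]^{\left|T\right|}$, so it suffices to show that $p$ does not Kim-divide over any $N\in C$. Since Kim-forking equals Kim-dividing in an NSOP$_{1}$ theory (as recorded in the preliminaries), it is in fact enough to prove that $p$ does not Kim-fork over $N$, i.e. that $c\ind_{N}^{K}M$ for $c\models p$.

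The key point is that the heir hypothesis, read from the other side, is a coheir statement. Fix $N\in C$ and $c\models p$. By definition $p$ being an heir of $p|_{N}$ means $c\ind_{N}^{h}M$, which the preliminaries record as being equivalent to $M\ind_{N}^{u}c$. Now I would run the chain of implications noted just after the definition of Kim-forking: $M\ind_{N}^{u}c$ implies $M\ind_{N}^{f}c$, which implies $M\ind_{N}^{K}c$. Since $N$ is an elementary substructure of $M\models T$, it is itself a model of $T$, and $T$ is NSOP$_{1}$, so symmetry of Kim-independence over models applies and yields $c\ind_{N}^{K}M$. As $N\subseteq M$, this says exactly that $p=\tp\left(c/M\right)$ does not Kim-fork over $N$, hence does not Kim-divide over $N$, as desired. (If one wishes, closure of the relevant set under increasing unions is checked exactly as in the proof of Lemma~\ref{getting heirs}, upgrading ``contains a club'' to ``is a club''.)

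I do not expect a genuine obstacle here: the substantive inputs are already in hand, namely Lemma~\ref{getting heirs} (the heir-witnessing substructures form a club), the implication coheir $\Rightarrow$ non-forking $\Rightarrow$ Kim-independence, and --- crucially --- symmetry of Kim-independence over models, which is one of the deep equivalents of NSOP$_{1}$. The only thing to get right is the bookkeeping: that ``$p$ is an heir of $p|_{N}$'' translates into the coheir relation $M\ind_{N}^{u}c$ with the roles of $M$ and $c$ interchanged, so that symmetry is what transfers Kim-independence back to the side we care about. One should also note that no cardinality issue intervenes: $M$ and $c$ are small relative to the monster model, and $N$ has size $\left|T\right|\geq\left|L\right|$, so $N$ is genuinely a model of $T$ over which Kim-dividing is defined.
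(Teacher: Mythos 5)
Your proof is correct and is essentially the paper's own argument: reduce to Lemma \ref{getting heirs}, translate the heir condition into $M\ind_{N}^{u}c$, pass through $\ind^{u}\Rightarrow\ind^{f}\Rightarrow\ind^{K}$, and apply symmetry of Kim-independence to get $c\ind_{N}^{K}M$. (The paper notes only that a weak form of symmetry suffices here, but otherwise the two proofs coincide.)
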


\begin{proof}
By Lemma \ref{getting heirs}, it suffices to show that if $p$ is
an heir of $p|_{N}$, then $p$ does not Kim-divide over $N$. But
if $p$ is an heir of $p|_{N}$, then, given $c\models p$, $M\ind_{N}^{u}c$,
hence $M\ind_{N}^{K}c$ by symmetry of Kim-independence (in fact one
needs only a weak version of symmetry, see \cite[Proposition 3.22]{kaplan2017kim})
which implies $c\ind_{N}^{K}M$. This shows that $p$ does not Kim-divide
over $N$. 
\end{proof}

\subsection{A proof of (1) implies (6) in Theorem \ref{thm:Main-intro}}
\begin{lem}
\label{clubfinding} Suppose $T$ is an arbitrary theory and $M\models T$
with $\left|M\right|\geq\left|T\right|=\kappa$. Given any global
$M$-finitely satisfiable type $q$, let $C_{q}$ denote the set of
$N\prec M$ with $\left|N\right|=\kappa$ such that $q^{\otimes\omega}|_{N}=r^{\otimes\omega}|_{N}$
for some global $N$-finitely satisfiable type $r$. Then: 
\begin{enumerate}
\item $C_{q}$ is in the club filter on $\left[M\right]^{\kappa}$. 
\item Given any set $A$, there is some $N\prec M$ of size $\leq\left|T\right|+\left|A\right|$
such that $A\subseteq N$ and $q^{\otimes\omega}|_{N}$ is a type
of a Morley sequence generated by some global type $r$ finitely satisfiable
in $N$ and if $\varphi\left(x,c\right)$ Kim-divides over $M$ via
$q$ then $\varphi\left(x,c\right)$ Kim-divides over $N$ via $r$. 
\end{enumerate}
\end{lem}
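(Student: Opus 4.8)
\textbf{Proof plan for Lemma \ref{clubfinding}.}

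The plan is to build an expansion of $M$ in an enlarged language so that elementary substructures of the expansion automatically satisfy the defining property of $C_q$, and then invoke Example \ref{exa:elementary substructures}. First I would fix a global $M$-finitely satisfiable type $q$ and work with the sequence-type $q^{\otimes\omega}$ over $M$, which by the discussion after the definition of Morley sequences is $M$-invariant (indeed $M$-finitely satisfiable). The key observation is that ``being a Morley sequence in a coheir of its own restriction'' is, for a substructure $N$, detectable from the $L$-type of a realization together with which parameters from $M$ ``witness'' finite satisfiability. Concretely, I would expand $M$ to an $L_q$-structure by adding, for each $L$-formula $\varphi(x_0,\ldots,x_{n-1};y)$, a relation $R_\varphi(y)$ interpreted as $\set{b\in M^{|y|}}{\varphi(x_0,\ldots,x_{n-1};b)\in q^{\otimes\omega}|_M}$, and also Skolem functions that, given parameters $b\in M$ and a formula $\varphi(x_0,\ldots,x_{n-1};b)$ in $q^{\otimes\omega}|_M$, pick out an element of $M$ realizing the corresponding instance (these exist because $q^{\otimes\omega}$ is finitely satisfiable in $M$). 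Note $|L_q|=|L|=\kappa$.

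The main step is then to check that for any $N'\prec M_q$ with $L$-reduct $N$, the restriction $q^{\otimes\omega}|_N$ is the $\otimes\omega$-power of some global type $r$ finitely satisfiable in $N$. Here one defines $r$ to be any global coheir extending the ``average''-style type read off from the $R_\varphi$ predicates relativized to $N$ — more precisely, $q$ restricted to $N$ generates (via Fact \ref{average}) a global $N$-finitely satisfiable type $r$, and because $N'$ is closed under the Skolem functions above, every formula in $q^{\otimes\omega}|_N$ is witnessed inside $N$, so the coheir $r^{\otimes\omega}$ agrees with $q^{\otimes\omega}$ on all parameters from $N$. This gives $q^{\otimes\omega}|_N = r^{\otimes\omega}|_N$, so $N\in C_q$; since the set of such $L$-reducts is a club by Example \ref{exa:elementary substructures}, part (1) follows. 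I expect the delicate point to be verifying that the finite-satisfiability witnesses for $q^{\otimes\omega}$ can genuinely be encoded by finitely many Skolem functions at each stage (so that closing under them suffices) and that the resulting $r$ is well-defined and $N$-finitely satisfiable — this is the heart of the argument, though it is a standard ``expand-and-reduce'' manoeuvre.

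For part (2), I would apply part (1) together with Fact \ref{clubfacts}(2): given $A$, start with $N_0\prec M$ of size $\leq|T|+|A|$ containing $A$, then use that $C_q$ contains a club $C_{\overline f}$ defined by finitary functions to close $N_0$ up under those functions (and under elementarity), arriving at $N\prec M$ of size $\leq |T|+|A|$ with $A\subseteq N$ and $N\in C_q$, so $q^{\otimes\omega}|_N = r^{\otimes\omega}|_N$ for some global $N$-finitely satisfiable $r$. It remains to see that if $\varphi(x,c)$ Kim-divides over $M$ via $q$ then it Kim-divides over $N$ via $r$: a Morley sequence $\sequence{c_i}{i<\omega}$ in $r$ over $N$ has the same $L$-type over $N$ (in particular the same type as a sequence, hence its $k$-inconsistency of $\set{\varphi(x,c_i)}{i<\omega}$ transfers) as a Morley sequence in $q$ over $N$, because $r^{\otimes\omega}|_N = q^{\otimes\omega}|_N$; and any Morley sequence in $q$ over $N$ extends to one over $M$ since $q$ is $M$-invariant, so the $k$-inconsistency witnessing Kim-dividing via $q$ over $M$ applies. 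Hence $\set{\varphi(x,c_i)}{i<\omega}$ is $k$-inconsistent, i.e. $\varphi(x,c)$ $k$-Kim-divides over $N$ via $r$. The only subtlety here is bookkeeping the parameter $c$: one may freely enlarge $N$ to contain $c$ while staying inside $C_q$ if desired, or simply note that Kim-dividing via a global type does not require the parameter to lie in the base.
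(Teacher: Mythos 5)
Your overall strategy (expand $M$ so that elementary substructures of the expansion land in $C_q$, then quote Example \ref{exa:elementary substructures}) is the same as the paper's, which routes everything through Lemma \ref{getting heirs}. But the key step is not closed, for two related reasons. First, taking $r$ to be \emph{any} global $N$-finitely satisfiable extension of $q|_{N}$ cannot work: two coheirs over $N$ with the same restriction to $N$ can already have different squares (in DLO with $N=\Qq$, the two coheirs filling the cut at $\sqrt{2}$ from below and from above agree on $N$ but their $\otimes 2$-powers restricted to $N$ order $x_{0},x_{1}$ oppositely), so $r^{\otimes\omega}|_{N}=q^{\otimes\omega}|_{N}$ genuinely depends on choosing the right $r$. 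Second, the condition your Skolem functions secure --- that every formula of $q^{\otimes\omega}|_{N}$ with parameters in $N$ has a realization in $N$ --- is not the condition you need. What is needed is that for a realization $\bar{a}=\sequence{a_{i}}{i<\omega}\models q^{\otimes\omega}|_{M}$, each $\tp\left(a_{i}/Na_{<i}\right)$ is finitely satisfiable in $N$; here the parameters $a_{<i}$ live outside $M$, so no finitary function on $M$ can hand you these witnesses, and finite satisfiability of the product type in $N$ does not imply the coordinate-wise statement. The paper gets the coordinate-wise statement from the heir property of $\tp\left(\bar{a}/M\right)$ over $N$ (Lemma \ref{getting heirs}, i.e., exactly your $R_{\varphi}$ predicates, but used to move the $M$-witness $c$ of $\varphi\left(c,a_{<i}\right)$ into $N$, with $a_{<i}$ kept fixed), and even then one still needs the argument of \cite[Lemma 4.4]{kaplan2017kim} to package an $N$-indiscernible sequence satisfying the coordinate-wise coheir condition into a single global $N$-finitely satisfiable $r$ with $r^{\otimes\omega}|_{N}=q^{\otimes\omega}|_{N}$. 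So the heart of the argument, which you flag as delicate, is indeed where the proposal breaks down as written.

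For part (2) there is a smaller issue: $C_{q}$ lives in $\left[M\right]^{\left|T\right|}$, so its elements have size exactly $\left|T\right|$ and cannot contain $A$ when $\left|A\right|>\left|T\right|$; closing under the club functions does not fix this. The paper's remedy is to pass to $T\left(A\right)$ (adding constants for $A$) and apply part (1) there, which simultaneously forces $A\subseteq N$ and raises the cardinality to $\left|T\right|+\left|A\right|$. Your argument for the transfer of Kim-dividing from $q$ over $M$ to $r$ over $N$ (via equality of the sequence types over $N$ and the fact that $k$-inconsistency is part of the type of the sequence) is correct once the main gap is repaired.
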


\begin{proof}
One proof of (1) essentially follows from the proof of \cite[Lemma 4.4]{kaplan2017kim},
so we also give an alternative one. Let $\bar{a}=\sequence{a_{i}}{i<\omega}$
be a coheir sequence generated by $q$ over $M$. Then, $N\in C_{q}$
iff $N\prec M$ and $\bar{a}$ is a coheir sequence over $N$ in the
sense that $\tp\left(a_{i}/a_{<i}N\right)$ if finitely satisfiable
in $N$. Thus it is easy to see that $C_{q}$ is closed under unions. 

Note that if $N\prec M$ is such that $\tp\left(\bar{a}/M\right)$
is an heir extension of its restriction to $N$, then $N\in C_{q}$:
if $\varphi\left(a_{i},a_{<i}\right)$ holds when $\varphi\left(x,y\right)$
is some formula over $N$, then for some $c\in M$, $\varphi\left(c,a_{<i}\right)$
holds, and by choice of $N$, we may assume that $c\in N$. Now Lemma
\ref{getting heirs} finishes the proof. 

(2) is immediate from (1), applied to the theory $T\left(A\right)$
obtained from $T$ by adding constants for the elements of $A$.
\end{proof}
\begin{thm}
\label{clubinbase} Suppose $T$ is NSOP$_{1}$ with $|T|=\kappa$
and $M\models T$. Then for a finite tuple $b$ and any set $A$,
the following are equivalent: 
\begin{enumerate}
\item $A\ind_{M}^{K}b$. 
\item There is a club $C\subseteq\left[M\right]{}^{\kappa}$ of elementary
substructures of $M$ such that $A\ind_{N}^{K}b$ for all $N\in C$. 
\item There is a stationary set $S\subseteq\left[M\right]{}^{\kappa}$ of
elementary substructures of $M$ such that $A\ind_{N}^{K}b$ for all
$N\in S$. 
\end{enumerate}
\end{thm}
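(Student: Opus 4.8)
The proof proceeds by establishing (1) $\Rightarrow$ (2) $\Rightarrow$ (3) $\Rightarrow$ (1) in a cycle, with the implications passing through the structural facts already assembled. The implication (2) $\Rightarrow$ (3) is immediate, since every club is stationary. The heart of the matter is the pair (1) $\Rightarrow$ (2) and (3) $\Rightarrow$ (1).

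For (1) $\Rightarrow$ (2): assume $A \ind_M^K b$. Since Kim-forking equals Kim-dividing in an NSOP$_1$ theory, and by symmetry of Kim-independence we may instead work with $b \ind_M^K A$; but $b$ is a finite tuple, so it suffices to find a club of $N \prec M$ over which $\tp(b/MA)$ — or rather the relevant finitely-many formulas — do not Kim-divide. The plan is to fix, for each $L$-formula $\varphi(x;y)$ with $\varphi(b;a)$ holding for some $a \in MA$ and each witness $a$, the information that $\varphi(x;a)$ does \emph{not} Kim-divide over $M$; by Kim's Lemma (Fact~\ref{kimslemma}) this means that for \emph{every} global $M$-invariant $q \supseteq \tp(a/M)$, a Morley sequence generated by $q$ yields a consistent set $\{\varphi(x;a_i) : i < \omega\}$. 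I would then invoke Lemma~\ref{clubfinding}(1): for a fixed global $M$-finitely satisfiable $q$ extending $\tp(a/M)$, the set $C_q$ of $N \prec M$ of size $\kappa$ over which $q^{\otimes \omega}$ is realized as a Morley sequence of some $N$-finitely satisfiable $r$ is in the club filter. On such an $N$, if $\varphi(x;a)$ were to Kim-divide over $N$ via $r$, then by the transfer property in Lemma~\ref{clubfinding}(2) (read in the contrapositive direction — Kim-dividing over $N$ via $r$ propagates upward to Kim-dividing over $M$ via $q$, using that the Morley sequence of $r$ over $N$ remains a Morley sequence of $q$ over $M$), we would contradict non-Kim-dividing over $M$. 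Intersecting these club-filter sets over the (at most $\kappa$-many up to the relevant data) formulas and witnesses — using $\kappa^+$-completeness of the club filter, Fact~\ref{clubfacts}(1), when $|M| \geq \kappa^+$, and handling the trivial case $|M| = \kappa$ separately — yields a set in the club filter of $N$ over which no formula in $\tp(b/MA)$ Kim-divides, hence $b \ind_N^K A$, hence $A \ind_N^K b$ by symmetry. Shrinking to an actual club inside this club-filter set completes (2).

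For (3) $\Rightarrow$ (1): this is the direction that should give the NSOP$_1$ characterization, and I expect it to mirror the contrapositive argument already used for (2) $\Rightarrow$ (1) in the main theorem, but localized to the base. Suppose $T$ has SOP$_1$; then by Fact~\ref{karyversion} there are $\varphi$, $k$, and a sequence witnessing the $k$-ary version. The plan is to build a counterexample to (3): a model $M$, a type $p = \tp(b/M)$, and a \emph{stationary set worth} of small substructures — in fact a club's complement, or at least enough to meet every club — over which $p$ Kim-divides. Concretely one would arrange an increasing continuous chain $\langle M_i : i < \kappa^+ \rangle$ with union $M$ such that $p \upharpoonright M_{i+1}$ Kim-divides over $M_i$ for all $i$, contradicting stationarity via Lemma~\ref{niceclubs} (which says $\{M_i : i < \kappa^+\}$ is a club of $[M]^\kappa$, so a set over which $p$ does not Kim-divide that is disjoint from a club $\{M_i\}$ in fact misses a club of $[M]^\kappa$, contradicting stationarity of its complement — one has to be careful here that it is the \emph{non}-Kim-dividing set which must be stationary). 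So really (3) $\Rightarrow$ (2) of the main theorem, whose contrapositive is already available: failure of (2)$_{\text{main}}$ plus Lemma~\ref{niceclubs} shows the non-Kim-dividing set is disjoint from a club, hence not stationary.

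\textbf{Main obstacle.} The delicate point is the direction (1) $\Rightarrow$ (2): getting the transfer of non-Kim-dividing from $M$ \emph{down} to club-many $N \prec M$ uniformly across all the formulas in $\tp(b/MA)$, while keeping the set of relevant $(\varphi, a)$-data small enough (cardinality $\leq \kappa$) that $\kappa^+$-completeness of the club filter applies — and simultaneously ensuring that the global invariant types chosen for different witnesses $a$ can be coherently handled, perhaps by working with a single global $M$-finitely satisfiable type extending $\tp(MA/\emptyset)$ or $\tp(a_\bullet/M)$ for an enumeration, rather than type-by-type. A second subtlety is the bookkeeping around $|M| = \kappa$ versus $|M| \geq \kappa^+$: when $|M| = \kappa$ the club filter on $[M]^\kappa$ is principal (generated by $\{M\}$), so (2) and (3) reduce to the statement $A \ind_M^K b$ itself, and one should note this makes the equivalence trivial in that case. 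The symmetry step $A \ind_N^K b \Leftrightarrow b \ind_N^K A$ is licensed by NSOP$_1$ but is worth flagging since it is used freely.
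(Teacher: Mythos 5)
Your implication (1)$\Rightarrow$(2) takes a wrong turn at the very first step, and the ``main obstacle'' you flag is a direct consequence of that turn rather than a genuine feature of the problem. By flipping via symmetry to $b\ind_{M}^{K}A$, you must test Kim-dividing of formulas $\varphi(x;a)\in\tp(b/NA)$ where the parameter $a$ ranges over finite tuples from $NA$; each such $a$ requires its own global type $q_a\supseteq\tp(a/M)$ and its own club $C_{q_a}$ from Lemma \ref{clubfinding}(1), and there are up to $|M|+|A|$ many relevant witnesses. The club filter on $\left[M\right]^{\kappa}$ is only $\kappa^{+}$-complete, so the intersection need not be in the filter, and neither of your suggested repairs (a single global type extending $\tp(MA/\emptyset)$, or more careful bookkeeping) closes this gap --- in particular $A$ may have size $>\kappa$, so even the $A$-side witnesses alone already defeat $\kappa^{+}$-completeness. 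The paper avoids the problem by \emph{not} applying symmetry: it reads $A\ind_{M}^{K}b$ directly as ``no $\varphi(x;b,n)\in\tp(a/Mb)$, for $a$ a finite tuple from $A$ and $n\in M$, Kim-divides over $M$'', so every instance of Kim-dividing is tested via Morley sequences in $\tp(b/M)$ for the \emph{single fixed finite tuple} $b$. One global $M$-finitely satisfiable $q\supseteq\tp(b/M)$ and the single club $C_{q}$ suffice: for $N\in C_{q}$ the coheir sequence $\langle b_{i}:i<\omega\rangle$ is simultaneously a Morley sequence in an $N$-finitely satisfiable type, the set $\{\varphi(x;b_{i},n):i<\omega\}$ is consistent because $a\ind_{M}^{K}b$, and Kim's lemma (Fact \ref{kimslemma}) over $N$ then gives non-Kim-dividing over $N$. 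No completeness of the club filter is used at all.

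Your (3)$\Rightarrow$(1) proves the wrong statement. In Theorem \ref{clubinbase} the theory is assumed NSOP$_{1}$ throughout, and item (1) is the assertion $A\ind_{M}^{K}b$ about the fixed data $A,M,b$ --- not the assertion that $T$ is NSOP$_{1}$. Assuming SOP$_{1}$ and building a continuous chain witnessing the failure of clause (2) of the main theorem is therefore beside the point; you have conflated this local statement with Theorem \ref{thm:Main-intro}. What is actually needed is the contrapositive: if $A\nind_{M}^{K}b$, witnessed by some Kim-dividing $\varphi(x;b,m)\in\tp(a/Mb)$ with $m\in M$, then the set of $N$ with $A\ind_{N}^{K}b$ misses a set in the club filter, so no stationary $S$ as in (3) can exist. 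The paper obtains this from the same club $C_{q}$ intersected with the (club) set of $N$ containing $m$: on any such $N$ the sequence $\langle b_{i}:i<\omega\rangle$ is a Morley sequence in an $N$-finitely satisfiable type along which $\{\varphi(x;b_{i},m):i<\omega\}$ is inconsistent, so $\varphi(x;b,m)$ Kim-divides over $N$ and $a\nind_{N}^{K}b$. This downward transfer of a single Kim-dividing formula to club-many $N$ is the ingredient missing from your sketch.
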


\begin{proof}
(1)$\implies$(2) Suppose that $A\ind_{M}^{K}b$. Let $q\supseteq\text{tp}\left(b/M\right)$
be a global $M$-finitely satisfiable type and choose $\sequence{b_{i}}{i<\omega}\models q^{\otimes\omega}|_{M}$
with $b_{0}=b$. By Lemma \ref{clubfinding}, there is a club $C_{q}$
of elementary substructures $N\prec M$ with $|N|=|T|$ so that $q^{\otimes\omega}|_{N}=r^{\otimes\omega}|_{N}$
for some global $N$-finitely satisfiable type $r$. Fix $N\in C$,
$a$ a finite tuple from $A$ and $\varphi\left(x;b,n\right)\in\text{tp}\left(a/Nb\right)$.
As $a\ind_{M}^{K}b$, we know $\set{\varphi\left(x;b_{i},n\right)}{i<\omega}$
is consistent. As $\sequence{b_{i}}{i<\omega}$ is also a Morley sequence
over $N$ in a global $N$-finitely satisfiable type, it follows from
Kim's lemma for Kim-dividing (Fact \ref{kimslemma}) that $\varphi\left(x;b,n\right)$
does not Kim-divide over $N$. As $\varphi\left(x;b,n\right)$ was
arbitrary, we conclude $a\ind_{N}^{K}b$. Since this was true for
any $a$, we have that $A\ind_{N}^{K}b$. 

(2)$\implies$(3) is immediate.

(3)$\implies$(1) Suppose $a\nind_{M}^{K}b$ for some finite tuple
$a$ from $A$. Let $\varphi\left(x;b,m\right)\in\text{tp}\left(a/Mb\right)$
be a formula witnessing this. Fix $q\supseteq\text{tp}\left(b/M\right)$
a global $M$-finitely satisfiable type and $\sequence{b_{i}}{i<\omega}\models q^{\otimes\omega}|_{M}$.
Let $C'=\set{N\prec M}{\left|N\right|=\left|T\right|\text{ and }m\in N}$.
The set $C'$ is clearly club so the intersection $C''=C_{q}\cap C'$
is in the club filter on $\left[M\right]^{\kappa}$. If $N\in C''$
and $q^{\otimes\omega}|_{N}=r^{\otimes\omega}|_{N}$ for some global
type $r$ finitely satisfiable in $N$, then $\varphi\left(x;b,m\right)\in\text{tp}\left(a/Nb\right)$
and $\sequence{b_{i}}{i<\omega}$ realizes $r^{\otimes\omega}|_{N}$.
As $\set{\varphi\left(x;b_{i},m\right)}{i<\omega}$ is inconsistent,
we have $a\nind_{N}^{K}b$. As $S$ is stationary, it must intersect
$C''$, so we get a contradiction. 
\end{proof}
\begin{cor}
\label{cor:club in base infinite} Suppose $T$ is NSOP$_{1}$ with
$|T|=\kappa$ and $M\models T$. Then for a finite tuple $a$ and
any set $B$, the following are equivalent: 
\begin{enumerate}
\item $a\ind_{M}^{K}B$. 
\item There is a club $C\subseteq\left[M\right]{}^{\kappa}$ of elementary
substructures of $M$ such that $a\ind_{N}^{K}B$ for all $N\in C$. 
\item There is a stationary set $S\subseteq\left[M\right]{}^{\kappa}$ of
elementary substructures of $M$ such that $a\ind_{N}^{K}B$ for all
$N\in S$.
\end{enumerate}
\end{cor}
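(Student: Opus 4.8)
The plan is to derive this from Theorem \ref{clubinbase} by flipping the two sides using symmetry of Kim-independence. The key observation is that Theorem \ref{clubinbase} is already stated with an \emph{arbitrary} set (there called $A$) on the base-changing side and a finite tuple on the other side, so once we apply symmetry we land exactly within its hypotheses. Moreover, the club produced by Theorem \ref{clubinbase} depends only on a chosen global $M$-finitely satisfiable extension of the type of the finite tuple, i.e.\ on finitely much data, so a single club suffices; there is no need to intersect clubs indexed by the finite subtuples of $B$, which would not even stay in the club filter when $\left|B\right| > \kappa$.

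First I would record the form of symmetry needed: for any $M \models T$ and any tuples $a,b$ (not necessarily finite), $a \ind_M^K b$ if and only if $b \ind_M^K a$. For finite tuples this is symmetry of Kim-independence over models, \cite[Theorem 8.1]{kaplan2017kim}. The general case reduces to this: Kim-forking of a type is witnessed by a single formula, which involves only finitely many object variables and finitely many parameters, so $a \ind_M^K b$ iff $a_0 \ind_M^K b_0$ for all finite subtuples $a_0 \subseteq a$ and $b_0 \subseteq b$; symmetry then passes through this equivalence.

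With this in hand, fix a finite tuple $a$ and a set $B$. Then $a \ind_M^K B$ iff $B \ind_M^K a$ (symmetry over $M$) iff there is a club $C \subseteq \left[M\right]^{\kappa}$ of elementary substructures of $M$ with $B \ind_N^K a$ for all $N \in C$ (Theorem \ref{clubinbase}, with the finite tuple taken to be $a$ and the set taken to be $B$) iff there is such a club $C$ with $a \ind_N^K B$ for all $N \in C$ (symmetry over each $N \in C$, each of which is a model of $T$), which is (2). This gives (1) $\iff$ (2); the implication (2) $\implies$ (3) is immediate, and (3) $\implies$ (1) follows by running the same chain with the stationary clause of Theorem \ref{clubinbase} in place of the club clause. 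The only point requiring care is the extension of symmetry to infinite tuples, which is routine as indicated; since the substantive content is already contained in Theorem \ref{clubinbase}, there is no genuine obstacle here.
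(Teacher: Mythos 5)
Your proof is correct and is essentially the paper's own argument: the paper derives the corollary from Theorem \ref{clubinbase} by symmetry of Kim-independence, exactly as you do. Your additional remarks — that symmetry extends to infinite tuples via finite character of Kim-forking, and that one application of symmetry avoids intersecting possibly more than $\kappa$ many clubs — are correct and just make explicit what the paper leaves implicit.
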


\begin{proof}
Follows immediately from symmetry of Kim-independence and Theorem
\ref{clubinbase}. 
\end{proof}
\begin{lem}
\label{lem:if phi(x,a) forks over big then also over small}Suppose
T is NSOP$_{1}$.  Assume $M\prec N$. Suppose that $a\ind_{M}^{K}N$
and $\varphi\left(x,a\right)$ Kim-divides over $N$ for $\varphi\left(x,y\right)\in L\left(M\right)$.
Then $\varphi\left(x,a\right)$ Kim-divides over $M$.
\end{lem}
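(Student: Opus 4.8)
The plan is to argue by contradiction, isolating the whole difficulty in a single ``transfer of a Morley sequence from $N$ down to $M$.''

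Assume $\varphi(x,a)$ does not Kim-divide over $M$; I will contradict the hypothesis that it Kim-divides over $N$. Since $\varphi(x,a)$ Kim-divides over $N$, fix by Fact~\ref{average} a global type $q\supseteq\tp(a/N)$ finitely satisfiable in $N$. By Kim's lemma for Kim-dividing (Fact~\ref{kimslemma}), $\varphi(x,a)$ $k$-Kim-divides over $N$ via $q$ for some $k<\omega$, so if $\sequence{a_i}{i<\omega}\models q^{\otimes\omega}|_N$ with $a_0=a$, then $\set{\varphi(x,a_i)}{i<\omega}$ is $k$-inconsistent. This $\sequence{a_i}{i<\omega}$ is a coheir sequence over $N$, hence $N$-indiscernible, hence $M$-indiscernible.

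The key claim is that $\sequence{a_i}{i<\omega}$ is in fact an $\ind^{K}$-Morley sequence over $M$, i.e. $a_i\ind_{M}^{K}a_{<i}$ for all $i$. Granting this, Fact~\ref{fact:Kim Morley is consistent} applies with $\varphi(x,a_0)=\varphi(x,a)$, which by assumption does not Kim-divide over $M$, and yields that $\set{\varphi(x,a_i)}{i<\omega}$ is consistent --- contradicting $k$-inconsistency. Hence $\varphi(x,a)$ Kim-divides over $M$, as wanted. For the claim, note that each $a_i$ realizes $q|_N=\tp(a/N)$, so $a_i\equiv_N a$; transporting $a\ind_{M}^{K}N$ along an automorphism over $N$ (which fixes $M$) gives $a_i\ind_{M}^{K}N$, and $a_i\ind_{N}^{u}a_{<i}$ because the sequence is a coheir sequence over $N$. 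So the claim reduces to the implication: $a_i\ind_{M}^{K}N$ and $a_i\ind_{N}^{u}a_{<i}$ imply $a_i\ind_{M}^{K}Na_{<i}$.

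This last implication is the main obstacle; it is a restricted transitivity statement for Kim-independence over the base, and it cannot be obtained from base monotonicity, which fails for $\ind^{K}$ as soon as $T$ is not simple --- so this is where NSOP$_{1}$ is genuinely used. I would establish it in one of two ways. Directly: given a putative $\theta(x,a_{<i})\in\tp(a_i/Na_{<i})$ with $\theta\in L(M)$ that Kim-divides over $M$, take a coheir sequence over $M$ in a global $M$-finitely-satisfiable type extending $\tp(a_{<i}/M)$ witnessing this, and use symmetry and the independence theorem over $M$ to amalgamate $a_i$ against that sequence (after making it $Ma_i$-indiscernible) into a single realization of $\set{\theta(x,b_j)}{j<\omega}$, contradicting its inconsistency. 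Alternatively, one does not take $q$ arbitrarily but builds the sequence $\sequence{a_i}{i<\omega}$ term by term, using extension for $\ind^{K}$ together with the independence theorem to choose at stage $i$ a coheir extension of $\tp(a/N)$ over $Na_{<i}$ that is simultaneously a non-Kim-forking extension over $M$. Either version can also be packaged by first unwinding ``$\varphi(x,a)$ does not Kim-divide over $M$'' as the existence of $c$ with $\models\varphi(c,a)$ and $c\ind_{M}^{K}a$ (hence, by extension, $c\ind_{M}^{K}Na$), which reduces the lemma to producing a conjugate $N^{*}\equiv_{Ma}N$ with $c\ind_{N^{*}}^{K}a$ --- again the same transfer, to be obtained from $a\ind_{M}^{K}N$ via the independence theorem and the chain condition for Kim-independence.
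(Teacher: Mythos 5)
Your overall frame matches the paper's: produce a sequence witnessing the Kim-dividing of $\varphi(x,a)$ over $N$, show it is an $\ind^{K}$-Morley sequence over $M$, and invoke Fact \ref{fact:Kim Morley is consistent} to contradict the assumption that $\varphi(x,a)$ does not Kim-divide over $M$. But there is a genuine gap at the central step. By taking a forward coheir sequence over $N$ (so $a_{i}\ind_{N}^{u}a_{<i}$), you are forced to prove the implication ``$a_{i}\ind_{M}^{K}N$ and $a_{i}\ind_{N}^{u}a_{<i}$ imply $a_{i}\ind_{M}^{K}Na_{<i}$.'' This is an instance of transitivity of Kim-independence over an intermediate model, which is not among the quoted facts and cannot be dispatched by ``symmetry plus the independence theorem'': note that the paper itself lists the weak form of transitivity as an open question in the very next subsection, and the dual of this lemma as Question \ref{que:Is-the-dual}. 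Concretely, with your orientation the finite satisfiability points the wrong way: if $\psi(a_{<i},a_{i})$ holds with $\psi\in L(M)$ and $\psi(z,a_{i})$ Kim-divides over $M$, the coheir condition $a_{i}\ind_{N}^{u}a_{<i}$ only lets you replace $a_{i}$ (the parameter of the Kim-dividing formula, which must be preserved) by an element of $N$, not $a_{<i}$. Your two sketches for the missing step are restatements of the difficulty rather than proofs; in particular the independence theorem does not preserve finite satisfiability of the amalgamated type, so the ``build the sequence term by term'' variant does not go through as described.

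The paper's proof avoids this entirely by reversing the sequence: starting from a $\mathbb{Z}$-indexed coheir sequence in $\tp(a/N)$ and reading it backwards, one obtains an $N$-indiscernible sequence with $a_{0}=a$, $\set{\varphi(x,a_{i})}{i<\omega}$ inconsistent, and crucially $a_{<i}\ind_{N}^{u}a_{i}$ (i.e.\ $a_{i}\ind_{N}^{h}a_{<i}$). Now if $\psi(z,a_{i})$ Kim-divides over $M$ and $\psi(a_{<i},a_{i})$ holds, finite satisfiability of $\tp(a_{<i}/Na_{i})$ in $N$ produces $n\in N$ with $\psi(n,a_{i})$, directly contradicting $N\ind_{M}^{K}a_{i}$ (which follows from $a\ind_{M}^{K}N$, symmetry, and $a_{i}\equiv_{N}a$). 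You should adopt this reversal; as written, your argument rests on an unproved transitivity principle.
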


\begin{proof}
Let $\sequence{a_{i}}{i<\omega}$ be an indiscernible sequence over
$N$ starting with $a_{0}=a$ such that $a_{i}\ind_{N}^{h}a_{<i}$
and $\set{\varphi\left(x,a_{i}\right)}{i<\omega}$ is inconsistent
(to construct it, let $\sequence{b_{i}}{i\in\Zz}$ be a coheir sequence
in the type of $\tp\left(a/N\right)$, so in particular $b_{i}\ind_{N}^{u}b_{<i}$
for $i<0$, hence $b_{>i}\ind_{N}^{u}b_{i}$ by transitivity of $\ind^{u}$,
and let $a_{i}=b_{-i}$ for $i<\omega$). 

Then $\sequence{a_{i}}{i<\omega}$ is an $\ind^{K}$-Morley sequence
over $M$ in the sense that $a_{i}\ind_{M}^{K}a_{<i}$. To see this,
suppose not, i.e., by symmetry suppose that $a_{<i}\nind_{M}^{K}a_{i}$.
Then for some formula $\psi\left(z,x\right)$ over $M$, $\psi\left(a_{<i},a_{i}\right)$
holds and $\psi\left(z,a_{i}\right)$ Kim-divides over $M$. Since
$a_{<i}\ind_{N}^{u}a_{i}$, for some $n\in N$, $\psi\left(n,a_{i}\right)$
holds. However, since $a_{i}\equiv_{N}a$, by symmetry $N\ind_{M}^{K}a_{i}$
\textemdash{} contradiction. 

Suppose that $\varphi\left(x,a\right)$ does not Kim-divide over $M$.
Then by Fact \ref{fact:Kim Morley is consistent}, $\set{\varphi\left(x,a_{i}\right)}{i<\omega}$
is consistent \textemdash{} contradiction. 
\end{proof}
\begin{lem}
\label{lem:closed under union} Suppose T is NSOP$_{1}$. Suppose
that $\sequence{M_{i}}{i\leq\alpha}$ is an increasing sequence of
elementary substructures of a model $N$, that $M_{\alpha}=\bigcup\set{M_{i}}{i<\alpha}$
and that $p\in S\left(N\right)$. Assume that $p$ does not Kim-fork
over $M_{i}$ for all $i<\alpha$. Then $p$ does not Kim-fork over
$M_{\alpha}$.
\end{lem}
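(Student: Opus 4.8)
The plan is to argue by contradiction and reduce, via symmetry of Kim-independence, to an application of Lemma \ref{lem:if phi(x,a) forks over big then also over small}. First I would dispose of the degenerate cases: if $\alpha$ is $0$ or a successor ordinal $\beta+1$, then $M_\alpha$ is just $M_\beta$ (or the statement is vacuous), so the conclusion is immediate from the hypothesis; hence I may assume $\alpha$ is a limit ordinal, and note that $M_i\prec M_\alpha\prec N$ for $i<\alpha$ by Tarski--Vaught. Now suppose toward a contradiction that $p$ Kim-forks over $M_\alpha$ and fix $c\models p$, so that $c\nind^K_{M_\alpha}N$. By symmetry of Kim-independence (valid since $T$ is NSOP$_1$) we get $N\nind^K_{M_\alpha}c$, and since Kim-forking coincides with Kim-dividing there are a finite tuple $\bar n$ from $N$, an $L$-formula $\chi(y;z,w)$, and a finite tuple $\bar m$ from $M_\alpha$ with $\models\chi(\bar n;c,\bar m)$ and $\chi(y;c,\bar m)$ Kim-dividing over $M_\alpha$. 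Because $\alpha$ is a limit ordinal, $\bar m$ lies in some $M_{i_0}$ with $i_0<\alpha$; setting $\psi(y;z):=\chi(y;z,\bar m)\in L(M_{i_0})$, I then have $\models\psi(\bar n;c)$ and $\psi(y;c)$ Kim-divides over $M_\alpha$.

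The heart of the argument is to apply Lemma \ref{lem:if phi(x,a) forks over big then also over small} with $M_{i_0}$ as the small model, $M_\alpha$ as the ambient model, $c$ as the parameter tuple, and $\psi$ as the formula (with the obvious matching of variables). The only hypothesis needing checking is $c\ind^K_{M_{i_0}}M_\alpha$, and this holds because the hypothesis of the lemma being proved gives that $p$ does not Kim-fork over $M_{i_0}$, i.e. $c\ind^K_{M_{i_0}}N$, so monotonicity lets us restrict the right-hand side to $M_\alpha\subseteq N$. The lemma then yields that $\psi(y;c)$ Kim-divides over $M_{i_0}$, so $\bar n\nind^K_{M_{i_0}}c$. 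But $\bar n$ is a finite tuple from $N$ and $c\ind^K_{M_{i_0}}N$ gives $\bar n\ind^K_{M_{i_0}}c$ by symmetry and monotonicity---the desired contradiction.

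The step I expect to be the crux is the symmetry reduction at the start. One cannot argue directly---taking a Kim-dividing formula $\varphi(x,a)\in p$ over $M_\alpha$, localizing its parameters to some $M_{i_0}$, and trying to push Kim-dividing down to $M_{i_0}$---because base monotonicity fails for Kim-independence in NSOP$_1$ theories, so knowing that a formula does not Kim-divide over $M_{i_0}$ says nothing about whether it Kim-divides over the larger base $M_\alpha$. The purpose of first applying symmetry is to swap the roles of the realization $c$ of $p$ and the ``small side'': it is precisely $c$ for which the needed Kim-independence $c\ind^K_{M_{i_0}}N$ is available, and only after this swap does $c$ occur as the \emph{parameter} of the Kim-dividing formula, which is what Lemma \ref{lem:if phi(x,a) forks over big then also over small} requires. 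Everything else is bookkeeping with monotonicity, symmetry, the limit-ordinal hypothesis, and the equivalence of Kim-forking and Kim-dividing.
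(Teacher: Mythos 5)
Your proof is correct and follows essentially the same route as the paper's: apply symmetry to move the realization of $p$ into parameter position, localize the $M_\alpha$-parameters of the Kim-dividing formula into some $M_{i_0}$, and invoke Lemma \ref{lem:if phi(x,a) forks over big then also over small} (with $c\ind^K_{M_{i_0}}M_\alpha$ obtained by monotonicity from $c\ind^K_{M_{i_0}}N$) to push Kim-dividing down to $M_{i_0}$ and contradict $\bar n\ind^K_{M_{i_0}}c$. Your closing remark about why symmetry is needed first (failure of base monotonicity) accurately identifies the crux.
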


\begin{proof}
Let $a\models p$. We want to show that $a\ind_{M_{\alpha}}^{K}N$,
so by symmetry it is enough to show that $N\ind_{M_{\alpha}}^{K}a$.
Suppose not. Then there is some formula $\varphi\left(x,y\right)$
in $L\left(M_{\alpha}\right)$ and some $b\in N$ such that $\varphi\left(b,a\right)$
holds and $\varphi\left(x,a\right)$ Kim-divides over $M_{\alpha}$.
Let $i<\alpha$ be such that $\varphi\left(x,y\right)\in L\left(M_{i}\right)$.
Since $M_{\alpha}\subseteq N$ and $a\ind_{M_{i}}^{K}N$ by assumption,
$a\ind_{M_{i}}^{K}M_{\alpha}$. Hence by Lemma \ref{lem:if phi(x,a) forks over big then also over small},
$\varphi\left(x,a\right)$ Kim-divides over $M_{i}$. Hence $b\nind_{M_{i}}^{K}a$.
But this is a contradiction since $a\ind_{M_{i}}^{K}N$ so by symmetry
$b\ind_{M_{i}}^{K}a$. 
\end{proof}
We can now prove (1) $\implies$ (6) from Theorem \ref{thm:Main-intro}. 
\begin{thm}
\label{thm:(1) implies (6)}Suppose that $T$ is NSOP$_{1}$. Suppose
that $a$ is a finite tuple, $a\ind_{M}^{K}N$ and $M\prec N$. Then
the set $E$ of $M'\in\left[M\right]^{\left|T\right|}$ such that
$M'\prec M$ and $a\ind_{M'}^{K}N$ is a club. 
\end{thm}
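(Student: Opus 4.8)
The goal is to show that the set $E$ of $M' \in [M]^{|T|}$ with $M' \prec M$ and $a \ind^K_{M'} N$ is a club in $[M]^{|T|}$. The two tasks are \emph{closed} and \emph{unbounded}. Closure under increasing unions is precisely the content of Lemma \ref{lem:closed under union}: if $\sequence{M'_i}{i<\alpha}$ is an increasing chain from $E$ with union $M'_\alpha \prec M$, then applying that lemma (with $N$ in the role of the ambient model and $p = \tp(a/N)$) gives $a \ind^K_{M'_\alpha} N$, so $M'_\alpha \in E$; of course I also need $M'_\alpha \prec M$, which holds since a union of an elementary chain of elementary substructures is again an elementary substructure.

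\textbf{Unboundedness.} Fix $Y \in [M]^{|T|}$; I must find $M' \in E$ with $Y \subseteq M'$. The idea is to combine the ``club in the base'' result (Corollary \ref{cor:club in base infinite}) with the ``passing to a small submodel of the big model'' mechanism from Lemma \ref{clubfinding}. Concretely: since $a \ind^K_M N$, by Corollary \ref{cor:club in base infinite} there is a club $C \subseteq [M]^{|T|}$ of elementary substructures $N' \prec M$ with $a \ind^K_{N'} N$. A club is unbounded, so pick $M' \in C$ with $Y \subseteq M'$; then $M' \prec M$, $|M'| = |T|$, $Y \subseteq M'$, and $a \ind^K_{M'} N$, i.e. $M' \in E$. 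This already gives unboundedness. (One could equivalently note that the set $C$ from Corollary \ref{cor:club in base infinite} is itself contained in $E$ once we also intersect with the club of elementary substructures containing any prescribed finite data, but unboundedness alone plus the closure above suffices to conclude $E$ contains a club; and since $E$ is closed, a closed set containing a club is itself a club.)

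\textbf{Putting it together.} So the structure is: (i) $E$ is closed, by Lemma \ref{lem:closed under union} together with the elementary-chain fact; (ii) $E$ is unbounded, because it contains the club $C$ provided by Corollary \ref{cor:club in base infinite} applied to the hypothesis $a \ind^K_M N$, and clubs are unbounded; (iii) a closed unbounded set is a club, so $E$ is a club in $[M]^{|T|}$. In fact, since $E \supseteq C$ and $C$ is a club and $E$ is closed, one gets directly that $E$ is a club. The hypothesis that $a$ is a finite tuple is used exactly where Corollary \ref{cor:club in base infinite} and Lemma \ref{lem:closed under union} are invoked (the latter is stated for types $p \in S(N)$, which covers finite tuples).

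\textbf{Main obstacle.} The only real subtlety is making sure the two prior results combine cleanly: Corollary \ref{cor:club in base infinite} gives a club $C \subseteq [M]^{|T|}$ with $a \ind^K_{N'} N$ for $N' \in C$, and I must check that each such $N'$ is genuinely an elementary substructure of $M$ (this is built into the statement of the corollary) so that the hypothesis ``$M' \prec M$'' in the definition of $E$ is met, and that the closure argument via Lemma \ref{lem:closed under union} applies with $M'_\alpha \prec M \prec N$ forming a legitimate tower. Neither is difficult, but the write-up should be explicit that $C \subseteq E$, whence $E$ contains a club and, being closed, is a club.
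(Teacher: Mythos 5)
Your proposal is correct and follows essentially the same route as the paper: closure under unions via Lemma \ref{lem:closed under union}, and the fact that $E$ contains a club via Corollary \ref{cor:club in base infinite} applied to the hypothesis $a\ind_{M}^{K}N$, whence a closed set containing a club is a club. The extra care you take (checking $M'_\alpha\prec M$ for the union of a chain, and that $C\subseteq E$) is exactly what the paper leaves implicit.
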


\begin{proof}
The family $E$ is closed under unions by Lemma \ref{lem:closed under union}.
Hence to finish we only need to show that $E$ contains a club, and
this follows from Corollary \ref{cor:club in base infinite} (1) $\implies$
(2). 
\end{proof}

\subsection{The equivalence (1)\textendash (6)}

We finish the proof of Theorem \ref{thm:Main-intro} with the following. 
\begin{thm}
\label{mainthm(1)-(6)} Suppose $T$ is a complete theory. The following
are equivalent: 
\begin{enumerate}
\item $T$ is NSOP$_{1}$. 
\item There is no continuous increasing sequence of $|T|$-sized models
$\sequence{M_{i}}{i<\left|T\right|^{+}}$ with union $M$ and $p\in S\left(M\right)$
such that $p\upharpoonright M_{i+1}$ Kim-forks over $M_{i}$ for
all $i<|T|^{+}$. 
\item For any $M\models T$, $p\in S\left(M\right)$, the set of elementary
substructures of $M$ of size $\left|T\right|$ over which $p$ does
not Kim-divide is a stationary subset of $\left[M\right]^{\left|T\right|}$. 
\item For any $M\models T$, $p\in S\left(M\right)$, the set of elementary
substructures of $M$ of size $\left|T\right|$ over which $p$ does
not Kim-divide contains a club subset of $\left[M\right]^{\left|T\right|}$. 
\item For any $M\models T$, $p\in S\left(M\right)$, the set of elementary
substructures of $M$ of size $\left|T\right|$ over which $p$ does
not Kim-divide is a club subset of $\left[M\right]^{\left|T\right|}$. 
\item Suppose that $N\models T$, $M\prec N$ and $p\in S\left(N\right)$
does not Kim-divide over $M$. Then the set of elementary substructures
of $M$ of size $\left|T\right|$ over which $p$ does not Kim-divide
is a club subset of $\left[M\right]^{\left|T\right|}$. 
\end{enumerate}
\end{thm}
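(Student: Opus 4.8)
The plan is to establish the cycle of implications $(1)\Rightarrow(5)\Rightarrow(4)\Rightarrow(3)\Rightarrow(2)\Rightarrow(1)$, drawing on the results already proved in this section. The genuinely new content is $(3)\Rightarrow(2)\Rightarrow(1)$; the forward implications among $(1)$, $(4)$, $(5)$, $(6)$ are essentially bookkeeping once Theorem~\ref{thm:(1) implies (6)} and the ``short proof'' of $(1)\Rightarrow(4)$ are in hand.

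For $(1)\Rightarrow(5)$: assume $T$ is NSOP$_1$, fix $M\models T$ and $p\in S(M)$, and let $E$ be the set of $N\prec M$ with $|N|=|T|$ over which $p$ does not Kim-divide. That $E$ contains a club is exactly the theorem proved in the ``short proof'' subsection (or one can invoke Theorem~\ref{thm:(1) implies (6)} with $N=M$). That $E$ is closed under increasing unions is Lemma~\ref{lem:closed under union} applied with the ambient model $N=M$: if $p$ does not Kim-fork over each $M_i$ in a chain, it does not Kim-fork over $\bigcup M_i$, and Kim-forking equals Kim-dividing in NSOP$_1$ theories. A closed set containing a club is a club, so $E$ is a club, giving $(5)$; in fact this argument with $M$ replaced by a substructure also hands us $(6)$, since Theorem~\ref{thm:(1) implies (6)} is precisely $(1)\Rightarrow(6)$. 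The implications $(5)\Rightarrow(4)\Rightarrow(3)$ are trivial: a club is contained in a club, and a set containing a club is stationary (every club meets it).

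The heart of the argument is $(3)\Rightarrow(2)\Rightarrow(1)$, and I will actually prove the contrapositive of $(3)\Rightarrow(1)$ by way of $(2)$, i.e.\ assume $T$ has SOP$_1$ and produce a failure of $(3)$. First, $\neg(2)\Rightarrow\neg(3)$: given a continuous increasing chain $\sequence{M_i}{i<|T|^+}$ with union $M$ and $p\in S(M)$ such that $p\restriction M_{i+1}$ Kim-forks (equivalently Kim-divides) over $M_i$ for all $i$, the set $\set{M_i}{i<|T|^+}$ is a club of $[M]^{|T|}$ by Lemma~\ref{niceclubs}. Any $N\prec M$ with $|N|=|T|$ over which $p$ does not Kim-divide must, if $N$ lies in this club, equal some $M_i$; but $p\restriction M_{i+1}$ already Kim-divides over $M_i$, and since $N=M_i\subseteq M_{i+1}\subseteq M$, base monotonicity fails but we do get that $p$ Kim-divides over $M_i$ — so no member of the club $\set{M_i}{i<|T|^+}$ lies in the ``good'' set, and the good set is not stationary. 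The remaining and genuinely hard implication is $\neg(1)\Rightarrow\neg(2)$: from a formula with SOP$_1$, build a continuous increasing chain of $|T|$-sized models of length $|T|^+$ and a type over the union that Kim-forks at every successor step. The natural approach is to use the tree of tuples $\sequence{a_\eta}{\eta\in 2^{<\omega}}$ witnessing SOP$_1$, or better the Fact~\ref{karyversion} reformulation, iterated transfinitely: one arranges a sequence $\sequence{\bar c_i}{i<|T|^+}$ of pairs with $c_{i,0}\equiv_{\bar c_{<i}}c_{i,1}$, with $\set{\varphi(x;c_{i,0})}{i<|T|^+}$ consistent and $\set{\varphi(x;c_{i,1})}{i}$ $k$-inconsistent, then let $M_i$ be an elementary substructure of size $|T|$ containing $\bar c_{<i}$ and arranged continuously, and let $p$ be (an extension to $M=\bigcup M_i$ of) the type saying $\varphi(x;c_{i,0})$ for all $i$; the pair structure $c_{i,0}\equiv c_{i,1}$ together with the $k$-inconsistency should force $\varphi(x;c_{i,0})$ — hence $p\restriction M_{i+1}$ — to Kim-divide over $M_i$ via the invariant type extending $\tp(c_{i,0}/M_i)$, after replacing the single witness by a Morley sequence.

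I expect the main obstacle to be exactly this last construction: making the Kim-dividing at each successor step happen \emph{via an $M_i$-invariant type} (so that it is genuinely Kim-dividing, not mere dividing) and ensuring the $c_{i,0}$'s can be chosen so that $\varphi(x;c_{i,0})$ divides over $M_i$ along a Morley sequence in such a type, while simultaneously keeping $\set{\varphi(x;c_{i,0})}{i<|T|^+}$ globally consistent and the models continuous and of the right size. One clean way around this, which I would pursue, is to run the transfinite construction so that each pair $(c_{i,0},c_{i,1})$ is produced by a coheir sequence: take a coheir sequence over $M_i$ in $\tp(c_{i,0}/M_i)$, use $k$-inconsistency along the full sequence to witness Kim-dividing over $M_i$, and use the conjugacy $c_{i,0}\equiv_{\bar c_{<i}}c_{i,1}$ plus a compactness/$\Delta$-system argument on formulas (the pigeonhole step alluded to in the introduction, but now forced by the SOP$_1$ tree rather than recovered from a failure of local character) to keep the union consistent. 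Cardinal arithmetic — there are only $|T|$ formulas $\varphi$ and only $|T|$ possible $k$'s, so one formula and one $k$ recur cofinally in $|T|^+$ — is what makes the length $|T|^+$ (rather than $(2^{|T|})^+$) suffice, and this is the improvement over \cite[Corollary 4.6]{kaplan2017kim}.
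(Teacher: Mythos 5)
Your architecture matches the paper's (the forward implications via Theorem \ref{thm:(1) implies (6)} and Lemma \ref{lem:closed under union}, and $(3)\Rightarrow(2)$ via Lemma \ref{niceclubs}, are exactly right), but the implication $(2)\Rightarrow(1)$ --- which you correctly identify as the heart of the matter --- is left with a genuine gap, and the workaround you sketch points in the wrong direction. You propose to witness Kim-dividing of $\varphi(x;c_{i,0})$ over $M_i$ by ``a coheir sequence over $M_i$ in $\tp(c_{i,0}/M_i)$, using $k$-inconsistency along the full sequence.'' But in the array of Fact \ref{karyversion} it is the column $\set{\varphi(x;c_{i,1})}{i}$ that is $k$-inconsistent, while $\set{\varphi(x;c_{i,0})}{i}$ is consistent; a Morley sequence in $\tp(c_{i,0}/M_i)$ carries no inconsistency by itself, so this step does not go through as stated. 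Likewise, the pigeonhole on formulas and the $\Delta$-system argument you invoke play no role: the single formula $\varphi$ and the single $k$ come directly from the SOP$_1$ witness, and the length $|T|^+$ is obtained simply by stretching an indiscernible array by compactness.

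The two devices that actually close the gap are: (i) pass to a Skolemized expansion $T^{sk}$ and set $N_i=\dcl(\bar c_{<i})$, so that each $N_i$ is a model of size $|T|$, the chain is automatically continuous, and crucially $c_{\delta,0}\equiv_{\bar c_{<\delta}}c_{\delta,1}$ upgrades to $c_{\delta,0}\equiv_{N_\delta}c_{\delta,1}$; and (ii) make the array $\sequence{\bar c_i}{i<|T|^+}$ indiscernible, so that at each limit $\delta$ the tail $\sequence{c_{j,1}}{\delta\le j<|T|^+}$ is finitely satisfiable in $\bar c_{<\delta}\subseteq N_\delta$ (one has $\bar c_j\ind^{u}_{N_\delta}\bar c_{>j}$), hence yields an honest $N_\delta$-finitely satisfiable global type along which $\set{\varphi(x;c_{j,1})}{j\ge\delta}$ is $k$-inconsistent. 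This shows $\varphi(x;c_{\delta,1})$ Kim-divides over $N_\delta$ via an invariant type, and the equivalence from (i) transfers this to $\varphi(x;c_{\delta,0})$, whose column remains consistent and so extends to a type $p$ over $\bigcup N_i$ refuting $(2)$ along the limit ordinals. Without the Skolemization the transfer $c_{\delta,0}\equiv_{N_\delta}c_{\delta,1}$ is unavailable, and without the indiscernibility you have no invariant type witnessing the dividing; these are precisely the points your sketch flags but does not supply.
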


\begin{proof}
(1)$\implies$(6) is Theorem \ref{thm:(1) implies (6)}.

(6) $\implies$ (5)$\implies$(4)$\implies$(3) is trivial (for (6)
implies (5), note that for $p\in S\left(M\right)$, $p$ does not
Kim-divide over $M$ trivially). 

(3)$\implies$(2) By Lemma \ref{niceclubs}, $C=\set{M_{i}}{i<\left|T\right|^{+}}$
is a club of $\left[M\right]^{\left|T\right|}$. As $T$ is NSOP$_{1}$,
there is a stationary set $S\subseteq\left[M\right]^{\left|T\right|}$
such that $N\in S$ implies $p$ does not Kim-fork over $N$. Choose
any $M_{i}\in C\cap S$ to obtain a contradiction.

(2)$\implies$(1). Suppose $T$ has SOP$_{1}$ as witnessed by some
formula $\varphi\left(x,y\right)$. Let $T^{sk}$ be a Skolemized
expansion of $T$. Then $T^{sk}$ also has SOP$_{1}$ as witnessed
by $\varphi\left(x,y\right)$. Thus by Proposition \ref{karyversion},
we can find a formula $\varphi\left(x,y\right)$ and an array $\sequence{c_{i,j}}{i<\omega,j<2}$
such that $c_{i,0}\equiv_{\overline{c}_{<i}}c_{i,1}$ for all $i<\omega$,
$\set{\varphi\left(x,c_{i,0}\right)}{i<\omega}$ is consistent and
$\set{\varphi\left(x;c_{i,1}\right)}{i<\omega}$ is 2-inconsistent
(all in $\mathbb{M}^{sk}$). By Ramsey and compactness we may assume
that $\sequence{\overline{c}_{i}}{i<\omega}$ is indiscernible (with
respect to $\mathbb{M}^{sk}$) and extend this sequence to length
$\left|T\right|^{+}$.

For $i\leq|T|^{+}$, let $N_{i}=\text{dcl}\left(\overline{c}_{<i}\right)$
(in $\mathbb{M}^{sk}$). Then for every limit ordinal $\delta<|T|^{+}$,
$\varphi\left(x,c_{\delta,1}\right)$ Kim-divides over $N_{\delta}$
as the sequence $\sequence{c_{j,1}}{\delta\leq j<\left|T\right|^{+}}$
is indiscernible and for all $\delta\leq j$, $\overline{c}_{j}\ind_{N_{\delta}}^{u}\overline{c}_{>j}$.
As $c_{\delta,1}\equiv_{\bar{c}_{<\delta}}c_{\delta,0}$, it follows
that $c_{\delta,1}\equiv_{N_{\delta}}c_{\delta,0}$, and hence $\varphi\left(x,c_{\delta,0}\right)$
also Kim-divides. Let $p\in S\left(N_{\left|T\right|^{+}}\right)$
be any complete type containing $\set{\varphi\left(x,c_{\delta,0}\right)}{\delta<\kappa}$,
which is possible as this partial type is consistent. The sequence
$\sequence{N_{\delta}}{\delta\in\text{lim}\left(\left|T\right|^{+}\right)}$
is an increasing and continuous sequence of elementary substructures
of $N_{\left|T\right|^{+}}$ of size $\left|T\right|$ with union
$N_{\left|T\right|^{+}}$ witnessing that (2) fails.
\end{proof}
\begin{cor}
\label{bigger sets} Suppose $T$ is NSOP$_{1}$, $M\models T$, $M\prec N$,
and $p\in S\left(N\right)$. Then $p$ does not Kim-fork over $M$
iff for every $\kappa$ with $\left|T\right|\leq\kappa\leq\left|M\right|$,
the set of elementary substructures of $M$ of size $\kappa$ over
which $p$ does not Kim-divide is a club subset of $\left[M\right]^{\kappa}$.
\end{cor}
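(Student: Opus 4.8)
The plan is to split the biconditional and place essentially all the work into the forward implication. Throughout I use freely that, as $T$ is NSOP$_{1}$, Kim-dividing and Kim-forking over models agree (see the remark after Fact~\ref{kimslemma}). The reverse implication I would dispatch at once by specializing the hypothesis to $\kappa=|M|$: a club of $[M]^{|M|}$ is in particular unbounded, and the only member of $[M]^{|M|}$ containing $M$ is $M$ itself (Definition~\ref{clubdef}), so the hypothesis forces $M$ to belong to the family of elementary substructures of size $|M|$ over which $p$ does not Kim-divide; hence $p$ does not Kim-divide, i.e.\ does not Kim-fork, over $M$. From now on I assume $p$ does not Kim-fork over $M$, fix $\kappa$ with $|T|\le\kappa\le|M|$, and let $E_\kappa$ denote the set of $N'\in[M]^{\kappa}$ with $N'\prec M$ over which $p$ does not Kim-divide; the task is to show $E_\kappa$ is a club of $[M]^{\kappa}$.

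Closedness of $E_\kappa$ is uniform in $\kappa$ and immediate: given a chain $\sequence{Y_i}{i<\alpha\le\kappa}$ in $E_\kappa$, its union is an elementary substructure of $M$ of size $\kappa$, and $p$ does not Kim-divide over it by Lemma~\ref{lem:closed under union}. I would prove unboundedness by induction on $\kappa\in[|T|,|M|]$. The base case $\kappa=|T|$ is exactly item~(6) of Theorem~\ref{mainthm(1)-(6)} (equivalently Theorem~\ref{thm:(1) implies (6)}) applied to the given $N\succ M$ and $p$: since $p$ does not Kim-divide over $M$, the set of $N'\prec M$ of size $|T|$ over which $p$ does not Kim-divide is a club, hence unbounded, in $[M]^{|T|}$. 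For the inductive step, suppose $|T|<\kappa\le|M|$ and $E_\mu$ is a club of $[M]^{\mu}$ for every $\mu\in[|T|,\kappa)$, and let $Y\in[M]^{\kappa}$. I would build recursively an increasing continuous chain $\sequence{Z_i}{i\le\delta}$ of elementary substructures of $M$, each of size $<\kappa$, with $p$ not Kim-dividing over $Z_i$ for $i<\delta$, such that $Y\subseteq Z:=\bigcup_{i<\delta}Z_i$ and $|Z|=\kappa$; then $Z\in E_\kappa$ by Lemma~\ref{lem:closed under union}, and $Y\subseteq Z$, which finishes the step. At successor stages the idea is to set $\mu=\max(|Z_i|,|T|)<\kappa$ and use that $E_\mu$ is unbounded (inductive hypothesis, or the base case when $\mu=|T|$) to find $Z_{i+1}\in E_\mu$ absorbing $Z_i$ together with the portion of $Y$ allotted to that stage; at limit stages one takes unions, and $p$ still does not Kim-divide over the union by Lemma~\ref{lem:closed under union}.

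The one genuinely delicate point — and what I would regard as the main obstacle, although it is not a model-theoretic one — is choosing $\delta$ and the sizes of the $Z_i$ so that $|Z|=\kappa$ while every proper initial union stays below $\kappa$; this forces a split into two cases. If $\kappa=\lambda^{+}$ is a successor cardinal, then $\lambda\in[|T|,|M|)$, and I would take $\delta=\lambda^{+}$, make every $|Z_i|=\lambda$, and introduce one new element of $Y$ per step; a union of at most $\lambda$ many $\lambda$-sized sets has size $\lambda$, so the intermediate unions stay in range, while $|Z|=\lambda^{+}=\kappa$ because $Y\subseteq Z$. If $\kappa$ is a limit cardinal, fix a strictly increasing continuous sequence of cardinals $\sequence{\kappa_i}{i<\cof(\kappa)}$ cofinal in $\kappa$ with $\kappa_0\ge|T|$, set $\delta=\cof(\kappa)$, split $Y=\bigcup_{i<\cof(\kappa)}Y_i$ with $\sequence{Y_i}{i<\cof(\kappa)}$ increasing continuous and $|Y_i|=\kappa_i$, and arrange $Y_i\subseteq Z_i$ and $\kappa_i\le|Z_i|<\kappa$; since $\cof(\kappa)$ is regular, a union of fewer than $\cof(\kappa)$ many sets of size $<\kappa$ has size $<\kappa$, so the intermediate unions again stay in range, whereas $\bigl|\bigcup_{i<\delta}Z_i\bigr|=\sup_i\kappa_i=\kappa$. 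In both cases the recursion goes through exactly as described, completing the induction and hence the proof. I emphasize that no model theory beyond Lemma~\ref{lem:closed under union} and the case $\kappa=|T|$ enters; everything else is the set-theoretic bookkeeping above.
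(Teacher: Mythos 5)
Your proof is correct, but it takes a genuinely different route from the paper's, which is a two-line reduction. For the forward direction the paper fixes an arbitrary $A\in\left[M\right]^{\kappa}$, passes to the theory $T\left(A\right)$ obtained by adding constants for $A$ (so that $\left|T\left(A\right)\right|=\kappa$), and invokes Theorem \ref{thm:Main-intro}(6) for that theory to get unboundedness of $E_{\kappa}$ in one step; you instead run a transfinite induction on $\kappa$ with a chain construction whose only model-theoretic inputs are the base case $\kappa=\left|T\right|$ and Lemma \ref{lem:closed under union}. Your version is longer and needs the cardinal-arithmetic case split, but it avoids having to check that Kim-dividing and the hypotheses of Theorem \ref{thm:(1) implies (6)} transfer to the expanded language (which the paper leaves implicit). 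For the reverse direction the paper specializes to $\kappa=\left|T\right|$ and quotes Corollary \ref{cor:club in base infinite}, whereas you specialize to $\kappa=\left|M\right|$ and use that the club filter on $\left[M\right]^{\left|M\right|}$ is principal at $M$; yours is the more elementary observation. One small bookkeeping slip: in the limit-cardinal case of your inductive step, the successor rule $\mu=\max\left(\left|Z_{i}\right|,\left|T\right|\right)$ is too small to let $Z_{i+1}$ absorb $Y_{i+1}$, whose size is $\kappa_{i+1}$; you need $\mu=\max\left(\left|Z_{i}\right|,\kappa_{i+1},\left|T\right|\right)$, which is still below $\kappa$, so the argument is unaffected.
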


\begin{proof}
Suppose that $p$ does not Kim-fork over $M$. Let $A\subseteq M$
be any subset of $M$ of size $\kappa$ and apply Theorem \ref{thm:Main-intro}
to the theory $T\left(A\right)$ obtained from $T$ by adding new
constant symbols for the elements of $A$. 

For the other direction, apply the left hand side with $\kappa=\left|T\right|$
and use Corollary \ref{cor:club in base infinite}. 
\end{proof}
\begin{cor}
\label{big objects} Suppose $T$ is NSOP$_{1}$ and $M\models T$.
Then given any set $A$, there is a club $E\subseteq\left[M\right]^{\left|T\right|+\left|A\right|}$
such that $N\in E$ iff $A\ind_{N}^{K}M$. 
\end{cor}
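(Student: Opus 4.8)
The plan is to reduce to the main theorem applied to an expansion of $T$ by constants. Write $\mu=\left|T\right|+\left|A\right|$. We may assume $\left|M\right|\geq\mu$, since otherwise $\left[M\right]^{\mu}=\emptyset$ and there is nothing to prove. Fix a subset $A_{0}\subseteq M$ with $\left|A_{0}\right|=\mu$ and let $T'=T\left(A_{0}\right)$ be the expansion of $T$ by a new constant for each element of $A_{0}$; then $\left|T'\right|=\mu$, the theory $T'$ is again NSOP$_{1}$ (naming parameters cannot introduce SOP$_{1}$, since a tree of tuples witnessing SOP$_{1}$ in $T'$ becomes one in $T$ after appending the fixed values of the new constants), and $M$ is naturally a model of $T'$. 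The bookkeeping point I would check directly is that for $N$ with $A_{0}\subseteq N\subseteq M$, being an $L\left(A_{0}\right)$-elementary substructure of $M$ is equivalent to being an $L$-elementary substructure of $M$, and for such $N$ the relation ``$\tp\left(A/M\right)$ does not Kim-fork over $N$'' is the same computed in $T$ or in $T'$ (as $N$ already contains the interpretations of the new constants, the relevant formulas, invariant types and Morley sequences are literally the same); moreover the $\mu$-sized $L\left(A_{0}\right)$-elementary substructures of $M$ are exactly the sets $N$ with $A_{0}\subseteq N\prec M$ and $\left|N\right|=\mu$.

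Next I would pass to finite subtuples. Enumerate $A$ as a tuple $\bar{a}=\sequence{a_{j}}{j<\left|A\right|}$ in variables $\sequence{x_{j}}{j<\left|A\right|}$, and for finite $w\subseteq\left|A\right|$ let $a_{w}$ be the corresponding finite subtuple. Since any one formula witnessing that $\tp\left(A/M\right)$ Kim-forks over some $N$ involves only finitely many of the variables $x_{j}$, we have $A\ind_{N}^{K}M$ if and only if $a_{w}\ind_{N}^{K}M$ for every finite $w$. For each fixed $w$, applying the equivalence (1)$\Leftrightarrow$(5) of Theorem \ref{thm:Main-intro} to $T'$ and the type $\tp\left(a_{w}/M\right)\in S\left(M\right)$ (recalling that Kim-dividing and Kim-forking coincide), together with the translation of the previous paragraph, shows that $C_{w}:=\set{N}{A_{0}\subseteq N\prec M,\ \left|N\right|=\mu,\ a_{w}\ind_{N}^{K}M}$ is a club subset of $\left[M\right]^{\mu}$.

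Now I would intersect. There are at most $\left|A\right|+\aleph_{0}\leq\mu$ finite subsets $w$, and the club filter on $\left[M\right]^{\mu}$ is $\mu^{+}$-complete by Fact \ref{clubfacts} (and principal in the degenerate case $\left|M\right|=\mu$), so $C:=\bigcap_{w}C_{w}$ is again a club in $\left[M\right]^{\mu}$; by the finite-subtuple reduction, $C=\set{N}{A_{0}\subseteq N\prec M,\ \left|N\right|=\mu,\ A\ind_{N}^{K}M}$. Let $E$ be the set of $\mu$-sized elementary substructures $N\prec M$ with $A\ind_{N}^{K}M$; then $E\supseteq C$, so $E$ contains a club. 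Finally, $E$ is closed under increasing unions of chains of length $\leq\mu$: if $\sequence{N_{i}}{i<\delta}$ is such a chain in $E$ with union $N_{\delta}$, then $N_{\delta}\prec M$ and $\left|N_{\delta}\right|=\mu$, and for each finite $w$ the type $\tp\left(a_{w}/M\right)\in S\left(M\right)$ does not Kim-fork over any $N_{i}$, hence not over $N_{\delta}$ by Lemma \ref{lem:closed under union}; so $A\ind_{N_{\delta}}^{K}M$ and $N_{\delta}\in E$. A closed set containing a club is a club, so $E$ is the desired club.

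The genuinely non-routine point --- and the reason one cannot simply quote Corollary \ref{cor:club in base infinite} and ``lift'' its $\left[M\right]^{\left|T\right|}$-clubs up to $\left[M\right]^{\mu}$ --- is that Kim-independence does \emph{not} satisfy base monotonicity, so $a_{w}\ind_{N_{0}}^{K}M$ for a $\left|T\right|$-sized $N_{0}\subseteq N$ does not give $a_{w}\ind_{N}^{K}M$. Expanding the language by $\mu$ constants drawn from $M$ and invoking the full strength of Theorem \ref{thm:Main-intro} for $T'$ is exactly what sidesteps this difficulty; everything else (the language-expansion bookkeeping, the finitary reduction of Kim-forking, $\mu^{+}$-completeness of the club filter, and closure under unions via Lemma \ref{lem:closed under union}) is routine given what has already been established.
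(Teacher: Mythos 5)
Your proof is correct and takes essentially the same route as the paper's: the paper applies Corollary \ref{bigger sets} (whose proof is exactly your constant-expansion argument, i.e.\ adding $\kappa$-many constants from $M$ and invoking Theorem \ref{thm:Main-intro}) to each finite tuple from $A$ and then intersects the resulting clubs using $\kappa^{+}$-completeness of the club filter together with finite character of Kim-forking. You merely inline that corollary and make explicit the final closure-under-unions step via Lemma \ref{lem:closed under union}, which the paper leaves implicit.
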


\begin{proof}
Let $\kappa=\left|A\right|+\left|T\right|$. By Corollary \ref{bigger sets},
we know for each finite tuple $a$ from $A$, there is a club $E_{a}\subseteq\left[M\right]^{\kappa}$
so that $N\in E_{a}$ iff $a\ind_{N}^{K}M$. Let $E=\bigcap_{a\in A}E_{a}$.
As $|A|\leq\kappa$ and the club filter on $\left[M\right]^{\kappa}$
is $\kappa^{+}$-complete (Fact \ref{clubfacts}(1)), $E$ is a club
of $\left[M\right]^{\kappa}$. By the strong finite character of Kim-independence,
we have $A\ind_{N}^{K}M$ iff $N\in E$. 
\end{proof}

\subsection{A sample application}
\begin{prop}
Suppose $T$ is NSOP$_{1}$ and $A\models T$. Given any set $C$,
there is some $C'\supseteq C$ with $\left|C'\right|=\left|C\right|+\left|T\right|$
such that $C'\cap A$ is a model and $C'\ind_{A\cap C'}^{K}A$. 
\end{prop}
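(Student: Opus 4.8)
The plan is to extract the statement essentially for free from Corollary \ref{big objects}. Set $\kappa=\left|C\right|+\left|T\right|$. First I would dispose of the degenerate case $\left|A\right|\leq\kappa$: here one simply takes $C'=C\cup A$, so that $C'\cap A=A$ is a model, $\left|C'\right|=\kappa$, and $C'\ind_{A}^{K}A$ holds trivially since $\tp\left(C'/A\right)$ never Kim-forks over $A$. From now on I assume $\left|A\right|>\kappa$.

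In the main case I would apply Corollary \ref{big objects} to the model $A$ and the set $C$ (the set there is arbitrary, so $C$ need not be a subset of $A$): this produces a club $E\subseteq\left[A\right]^{\kappa}$ such that $N\in E$ if and only if $C\ind_{N}^{K}A$. Intersecting $E$ with the club of elementary substructures of $A$ of size $\kappa$ (Example \ref{exa:elementary substructures}), I may assume every member of $E$ is an elementary substructure of $A$. Since $\left|C\cap A\right|\leq\left|C\right|\leq\kappa$ and a club is in particular unbounded, I can choose $N\in E$ with $C\cap A\subseteq N$, and then put $C'=C\cup N$.

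It then remains to verify the three requirements on $C'$. As $C\cap A\subseteq N\subseteq A$, we get $C'\cap A=N$, which is an elementary substructure of $A$, hence a model; and $\left|C'\right|=\left|C\right|+\left|N\right|=\kappa$. For the independence clause, I would note that $N\subseteq A$ gives $N\cup C'=N\cup C$, so, using symmetry of Kim-independence (which holds because $T$ is NSOP$_{1}$),
\[
C'\ind_{N}^{K}A\;\Longleftrightarrow\;A\ind_{N}^{K}C'\;\Longleftrightarrow\;A\ind_{N}^{K}C\;\Longleftrightarrow\;C\ind_{N}^{K}A,
\]
the middle equivalence holding because $\tp\left(A/N\cup C'\right)=\tp\left(A/N\cup C\right)$, and the last independence holding since $N\in E$.

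Since Corollary \ref{big objects} already packages all of the local-character machinery, I do not expect a genuine obstacle here. The only steps that need a moment's attention are: (i) remembering to intersect $E$ with the club from Example \ref{exa:elementary substructures}, so that $C'\cap A$ is an honest model rather than an arbitrary subset of $A$; (ii) the observation that enlarging the right-hand side of $C\ind_{N}^{K}A$ by the set $N$, which is already contained in the base, changes nothing — this is the only place symmetry is invoked; and (iii) separating out the small-$A$ case, which Corollary \ref{big objects} does not directly cover.
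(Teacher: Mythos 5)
Your proof is correct, but it takes a genuinely different and shorter route than the paper's. The paper iterates Corollary \ref{big objects} $\omega$ times: it builds an increasing chain $C_{0}\subseteq C_{1}\subseteq\cdots$ with $C_{i+1}=C_{i}\cup X_{i}$ and chooses a \emph{fresh} club $E_{i+1}$ for each $C_{i+1}$, precisely because enlarging $C_{i}$ by the substructure $X_{i}$ is not covered by the club $E_{i}$ chosen for $C_{i}$; the construction is then closed off at stage $\omega$ using closure of clubs under unions of chains together with finite character of $\ind^{K}$. You short-circuit the iteration by observing that the only elements you add to $C$ are elements of the intended base $N$ itself, so that $N\cup C'=N\cup C$ and hence, after two applications of symmetry, $C\cup N\ind_{N}^{K}A$ is equivalent to $C\ind_{N}^{K}A$, which holds since $N\in E$. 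This reduces the argument to a single application of Corollary \ref{big objects} plus unboundedness of the club; the ingredients are the same (symmetry is already what drives Corollary \ref{big objects}, and symmetry for infinite tuples is used freely elsewhere in the paper), so nothing extra is assumed. Two very minor points: the degenerate case $\left|A\right|\leq\kappa$ does need the separate treatment you give it, since the club machinery presupposes $\left|A\right|\geq\kappa$ (and when $\left|A\right|=\kappa$ the club filter on $\left[A\right]^{\kappa}$ is principal at $A$, yielding the same $C'=C\cup A$); and in that case, if $\left|C\right|+\left|A\right|<\kappa$, you should pad $C'$ up to size exactly $\kappa$, which is harmless because no type over $A$ Kim-forks over $A$.
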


\begin{proof}
Let $\kappa=\left|C\right|+\left|T\right|$. Let $C_{0}=C$ and, by
Corollary \ref{big objects}, we may let $E_{0}\subseteq\left[A\right]^{\kappa}$
be a club of elementary substructures of $A$ such that $N\in E_{0}$
implies $C_{0}\ind_{N}^{K}A$. By induction, we will choose sets $C_{i}$,
clubs $E_{i}\subseteq\left[A\right]^{\kappa}$, and models $X_{i}\prec A$
such that 
\begin{enumerate}
\item $X_{i}\in\bigcap_{j\leq i}E_{i}$ and $C_{i}\cap A\subseteq X_{i}$. 
\item $C_{i+1}=C_{i}\cup X_{i}$. 
\item For all $N\in E_{i}$, we have $C_{i}\ind_{N}^{K}A$. 
\end{enumerate}
Given $\sequence{C_{i},X_{i},E_{i}}{i\leq n}$, let $C_{n+1}=C_{n}\cup X_{n}$.
By Corollary \ref{big objects}, we may let $E_{n+1}\subseteq\left[A\right]^{\kappa}$
be a club such that $N\in E_{n+1}$ implies $C_{n+1}\ind_{N}^{K}A$.
As 
\[
\set{X\in\left[A\right]^{\kappa}}{C_{n+1}\cap A\subseteq X}
\]
 is a club of $\left[A\right]^{\kappa}$, we may choose $X_{n+1}\in\bigcap_{i\leq n+1}E_{i}$
containing $C_{n+1}\cap A$. This completes the induction.

Let $C_{\omega}=\bigcup_{i<\omega}C_{i}$. By construction, $C_{\omega}\cap A=\bigcup_{i<\omega}X_{i}$.
As $i<j$ implies $X_{i}\subseteq X_{j}$, and $i\geq n$ implies
$X_{i}\in E_{n}$, it follows that 
\[
C_{\omega}\cap A=\bigcup_{i\geq n}X_{i}\in E_{n}
\]
for all $n$, as $E_{n}$ is club. Also as each $X_{i}$ is a model,
this additionally shows that $C_{\omega}\cap A$ is a model. Moreover,
if $c\in C_{\omega}$ is a finite tuple, there is some $n$ so that
$c\in C_{n}$, hence $c\ind_{C_{\omega}\cap A}^{K}A$, by the choice
of $E_{n}$. Setting $C'=C_{\omega}$, we finish. 
\end{proof}

\subsection{Open questions}
\begin{question}
\label{que:Is-the-dual}Is the dual of Lemma \ref{lem:if phi(x,a) forks over big then also over small}
also true? Namely, suppose that $a\ind_{M}^{K}N$ and $\varphi\left(x,a\right)$
Kim-divides over $M$ for $\varphi\left(x,y\right)\in L\left(M\right)$.
Then is it true that $\varphi\left(x,a\right)$ Kim-divides over $N$?
\end{question}

If the answer to Question \ref{que:Is-the-dual} is ``yes'', then
we have the following weak form of transitivity (note that a full
version of transitivity does not hold, see \cite[Section 9.2]{kaplan2017kim}).
\begin{claim}
(Weak form of transitivity) Suppose the answer is ``yes''. Let $M\prec N$.
Suppose that $a\ind_{M}^{K}N$ and $a\ind_{N}^{K}B$. Then $a\ind_{M}^{K}B$. 
\end{claim}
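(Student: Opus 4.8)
The plan is to deduce the claim directly from Lemma \ref{lem:if phi(x,a) forks over big then also over small} together with its hypothesized dual, using finite character of Kim-forking and symmetry. Fix $a$ with $a\ind_{M}^{K}N$ and $a\ind_{N}^{K}B$; without loss of generality $a$ is a finite tuple by finite character. I want $a\ind_{M}^{K}B$, i.e., no formula $\varphi(x,b)\in\tp(a/MB)$ with $\varphi(x,y)\in L(M)$ and $b$ a finite tuple from $B$ Kim-divides over $M$. So suppose towards a contradiction that some such $\varphi(x,b)\in\tp(a/MB)$ Kim-divides over $M$.

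The first step would be to move the base up to $N$ using the dual. Since $\varphi(x,y)\in L(M)\subseteq L(N)$ and $a\ind_{M}^{K}N$, the affirmative answer to Question \ref{que:Is-the-dual} (applied with $\varphi(x,a)$ in place of $\varphi(x,y)$ — note $b$ now plays the role of the invisible extra parameter, so strictly I should view $\varphi(x,b)$ as an $L(N)$-formula $\psi(x,b)$ in the variable $a$ — actually the cleanest formulation: apply the dual to the $L(M)$-formula $\varphi(x,y)$ evaluated at $a$, concluding that $\varphi(x,a)$ Kim-divides over $N$) gives that $\varphi(x,a)$ Kim-divides over $N$. Here I should be a little careful: the dual as stated speaks of $\varphi(x,a)$ for $\varphi(x,y)\in L(M)$, and what I have is $\varphi(x,b)$ with $b\in B$; but since $b\in B$ and I have not assumed $a\ind_M^K B$, I instead reinterpret: let $\chi(x;y,z)=\varphi(x;y)$ where I suppress $z$, or more simply absorb $b$ into the displayed parameter and treat the situation as: the formula $\varphi(x,a)$, over $N$, contains $b$ only through the fact that $\varphi(b,a)$ holds. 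The honest route is: $\varphi(x,a)\in L(M\cup\{a\})$ Kim-divides over $M$, $a\ind_M^K N$, so by the dual $\varphi(x,a)$ Kim-divides over $N$; then since $\varphi(b,a)$ holds with $b\in B$ and $\varphi(x,y)\in L(M)\subseteq L(N)$, the formula $\varphi(x,b)\in\tp(a/NB)$ Kim-divides over $N$, contradicting $a\ind_{N}^{K}B$.

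So the proof is essentially two lines: apply the (conjectural) dual to push the Kim-dividing of $\varphi(x,a)$ from base $M$ up to base $N$, then observe that this exhibits a Kim-forking formula in $\tp(a/NB)$ over $N$, contradicting $a\ind_N^K B$; reduce to finite $a$ at the start by strong finite character of Kim-independence. The only subtlety — and the single place requiring care rather than the ``main obstacle'', since the hard content is assumed — is the bookkeeping with parameters: making sure that when we apply the dual to $\varphi(x,a)$, the tuple $a$ is exactly the ``$a$'' of the dual statement, $M$ is its ``$M$'', $N$ is its ``$N$'', and $\varphi(x,y)\in L(M)$; and then recognizing that "$\varphi(x,a)$ Kim-divides over $N$" together with $\varphi(b,a)$ for $b\in B$ is precisely what it means for $\tp(a/NB)$ to Kim-fork over $N$. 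I would write it out as follows.

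\begin{proof}
By strong finite character of Kim-independence we may assume $a$ is a finite tuple. Suppose towards a contradiction that $a\nind_{M}^{K}B$. Then there is a finite tuple $b$ from $B$ and a formula $\varphi\left(x;y\right)\in L\left(M\right)$ with $\varphi\left(b,a\right)$ — equivalently $\varphi\left(a,b\right)$ — such that $\varphi\left(x,a\right)$ Kim-divides over $M$ (recall Kim-forking equals Kim-dividing). Since $\varphi\left(x,y\right)\in L\left(M\right)$ and $a\ind_{M}^{K}N$, the affirmative answer to Question \ref{que:Is-the-dual} yields that $\varphi\left(x,a\right)$ Kim-divides over $N$. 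As $\varphi\left(x,y\right)\in L\left(M\right)\subseteq L\left(N\right)$ and $b$ is a finite tuple from $B$ with $\varphi\left(a,b\right)$ holding, this shows $\varphi\left(x,b\right)\in\tp\left(a/NB\right)$ Kim-divides over $N$, so $a\nind_{N}^{K}B$, contradicting our hypothesis.
\end{proof}
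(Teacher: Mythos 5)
Your proof is correct and follows essentially the same route as the paper's: assume $a\nind_M^K B$, use symmetry to extract a formula $\varphi(x,y)\in L(M)$ with $\varphi(b,a)$ holding and $\varphi(x,a)$ Kim-dividing over $M$, push the Kim-dividing up to base $N$ via the conjectured dual, and contradict $a\ind_N^K B$. The only cosmetic point is your last step, where ``$\varphi(x,a)$ Kim-divides over $N$'' literally yields $b\nind_N^K a$ rather than ``$\varphi(x,b)\in\tp(a/NB)$ Kim-divides over $N$''; one further (already acknowledged) appeal to symmetry closes that, exactly as in the paper.
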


\begin{proof}
Suppose not. Then by symmetry there is a formula $\varphi\left(x,y\right)$
over $M$ such that $\varphi\left(b,a\right)$ holds for some $b\in B$
and $\varphi\left(x,a\right)$ Kim-divides over $M$. However, since
$b\ind_{N}^{K}a$, $\varphi\left(x,a\right)$ does not Kim-divide
over $N$. By assumption we arrive at a contradiction. 
\end{proof}
\begin{question}
Does the weak form of transitivity hold in NSOP$_{1}$ theories?
\end{question}

\begin{question}
The proof of (1) implies (6) in Theorem \ref{thm:Main-intro} relied
heavily on symmetry of Kim-independence, whose proof assumes that
the whole theory is NSOP$_{1}$. However, a closer look at the proof
of (1) implies (4) given in Section \ref{sec:Pierre's proof}, or
observing the proof using stationary logic given below, we see that
for (1) implies (4), we only need that a particular formula $\varphi\left(x,y\right)$
does not have an SOP$_{1}$ array as in Fact \ref{karyversion}. Can
the same be said for (1) implies (6)?
\end{question}

\begin{question}
Is there a local counterpart to Lemma \ref{lem:closed under union}.
Namely, under NSOP$_{1}$, assume that $\varphi\left(x,a\right)$
does not Kim-fork over $M_{i}$ for $i<\alpha$ an increasing union.
Is it true that $\varphi\left(x,a\right)$ does not Kim-fork over
$\bigcup_{i<\alpha}M_{i}$?
\end{question}

\begin{question}
Is it true that $T$ is NSOP$_{1}$ if and only if for every $M\models T$
and complete type $p\in S\left(M\right)$, there is some $N\prec M$
of cardinality $\leq\left|T\right|$ such that $p$ does not Kim-fork
over $N$?
\end{question}

\section{\label{sec:A proof using stationary logic}A proof of (1) implies
(4) in Theorem \ref{thm:Main-intro} using stationary logic}

\subsection{More on clubs}
\begin{defn}
Suppose $\kappa$ is a cardinal and $A\subseteq B$, $S\subseteq\left[A\right]^{\kappa}$,
and $T\subseteq\left[B\right]^{\kappa}$. We define $S^{B}\in\left[A\right]^{\kappa}$
and $T\upharpoonright A\in\left[A\right]^{\kappa}$ by 
\begin{eqnarray*}
S^{B} & = & \set{Y\in\left[B\right]^{\kappa}}{Y\cap A\in S}\\
T\upharpoonright A & = & \set{X\in\left[A\right]^{\kappa}}{\text{ there is }Y\in T\text{ such that }X=Y\cap A}.
\end{eqnarray*}
\end{defn}

\begin{fact}
\cite[Theorem 8.27]{jech2013set} \label{jechfacts} Suppose $\kappa$
is a cardinal, $A\subseteq B$, $S\subseteq\left[A\right]^{\kappa}$,
and $T\subseteq\left[B\right]^{\kappa}$. 
\begin{enumerate}
\item If $S$ is stationary in $\left[A\right]^{\kappa}$, then $S^{B}$
is stationary in $\left[B\right]^{\kappa}$. 
\item If $T$ is stationary in $\left[B\right]^{\kappa}$, then $T\upharpoonright A$
is stationary in $\left[A\right]^{\kappa}$. 
\end{enumerate}
\end{fact}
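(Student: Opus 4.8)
The plan is to obtain both parts from the description of clubs by closure functions (Fact~\ref{clubfacts}(2)), using a more involved argument for (1) than for (2); neither part will logically require the other.

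For (1), I would fix an arbitrary club $C\subseteq\left[B\right]^{\kappa}$ and, by Fact~\ref{clubfacts}(2), a function $F:B^{<\omega}\to\left[B\right]^{\kappa}$ with $C_{F}\subseteq C$. For $X\in\left[B\right]^{\kappa}$ let $\overline{X}$ denote the $F$-closure of $X$, i.e. the least $Y\supseteq X$ with $F\left(Y^{<\omega}\right)\subseteq Y$, built by the usual $\omega$-step closing-off; since $\kappa$ is infinite this is well-defined with $\overline{X}\in\left[B\right]^{\kappa}$, and by construction $\overline{X}\in C_{F}\subseteq C$. The crucial point is that
\[
D:=\set{X\in\left[A\right]^{\kappa}}{\overline{X}\cap A=X}
\]
is a club of $\left[A\right]^{\kappa}$. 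For unboundedness, given $X_{0}\in\left[A\right]^{\kappa}$ put $X_{n+1}=\overline{X_{n}}\cap A$; then $X_{n}\subseteq A$ for all $n$, so $X_{n}\subseteq\overline{X_{n}}\cap A=X_{n+1}$, and with $X_{\omega}=\bigcup_{n}X_{n}$ the fact that $F$ only sees finite tuples gives $\overline{X_{\omega}}=\bigcup_{n}\overline{X_{n}}$, whence $\overline{X_{\omega}}\cap A=\bigcup_{n}\left(\overline{X_{n}}\cap A\right)=X_{\omega}$; thus $X_{\omega}\in D$ and $X_{\omega}\supseteq X_{0}$. Closure of $D$ under chains of length $\leq\kappa$ is the same computation. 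Since $S$ is stationary and $D$ is a club, pick $X\in S\cap D$ and set $Y:=\overline{X}$; then $Y\in C_{F}\subseteq C$ and $Y\cap A=X\in S$, so $Y\in S^{B}\cap C$. As $C$ was arbitrary, $S^{B}$ is stationary.

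For (2), it is enough to show that for every club $D\subseteq\left[A\right]^{\kappa}$ the set $D^{B}$ is a club of $\left[B\right]^{\kappa}$: then $T\cap D^{B}\neq\emptyset$, and any $Y$ in this intersection satisfies $Y\cap A\in\left(T\upharpoonright A\right)\cap D$, so $T\upharpoonright A$ meets every club. Closure of $D^{B}$ under a chain $\sequence{Y_{i}}{i<\delta}$ with $\delta\leq\kappa$ is immediate, since then $\sequence{Y_{i}\cap A}{i<\delta}$ is a chain in $D$ whose first term has size $\kappa$, and its union is $\left(\bigcup_{i}Y_{i}\right)\cap A$. For unboundedness, given $Y_{0}\in\left[B\right]^{\kappa}$ enlarge $Y_{0}\cap A$ to some $Z\in\left[A\right]^{\kappa}$ (possible as $\left|A\right|\geq\kappa$), use unboundedness of $D$ to find $X\in D$ with $Z\subseteq X$, and put $Y=X\cup\left(Y_{0}\setminus A\right)$; then $Y\in\left[B\right]^{\kappa}$, $Y_{0}\subseteq Y$, and $Y\cap A=X\in D$, so $Y\in D^{B}$.

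The step I expect to be the main obstacle is checking that $D$ in the proof of (1) is genuinely a club: this rests on the $F$-closure operator being well-defined with values in $\left[B\right]^{\kappa}$ (where one uses $\kappa^{<\omega}=\kappa$, i.e. $\kappa$ infinite) and on its commuting with increasing unions of chains of length $\leq\kappa$; everything else is routine cardinality bookkeeping. One should also note separately the degenerate case $\left|A\right|=\kappa$, in which, by Definition~\ref{clubdef}, a subset of $\left[A\right]^{\kappa}$ is stationary exactly when it contains $A$, and both claims become trivial.
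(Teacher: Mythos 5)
Your proof is correct, and since the paper states this as a Fact cited from Jech (Theorem 8.27) without reproducing an argument, the relevant comparison is with the standard textbook proof — which is exactly what you give: both directions via the closure-function presentation of clubs, with the key point for (1) being that $\set{X\in\left[A\right]^{\kappa}}{\overline{X}\cap A=X}$ is a club of $\left[A\right]^{\kappa}$, and for (2) that $D^{B}$ is a club of $\left[B\right]^{\kappa}$. Nothing further is needed.
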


\begin{lem}
\label{stationaryunion} Suppose $X$ is a set and $\lambda$ and
$\kappa$ are cardinals with $\lambda\leq\kappa<\left|X\right|$.
Suppose, moreover, we are given a stationary subset $\mathcal{S}\subseteq\left[X\right]^{\kappa}$
and, for every $Y\in\mathcal{S}$, a stationary subset $S_{Y}\in\left[Y\right]^{\lambda}$.
Then $\mathcal{S}'=\bigcup_{Y\in\mathcal{S}}S_{Y}$ is a stationary
subset of $\left[X\right]^{\lambda}$. 
\end{lem}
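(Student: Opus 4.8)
The plan is to show $\mathcal{S}'$ meets every club of $\left[X\right]^{\lambda}$ directly, using the function-representation of clubs from Fact \ref{clubfacts}. Fix a club $C\subseteq\left[X\right]^{\lambda}$. Since $\lambda\leq\kappa<\left|X\right|$ we have $\left|X\right|\geq\lambda^{+}$, so by Fact \ref{clubfacts}(2) there is $F:X^{<\omega}\to\left[X\right]^{\lambda}$ with $C_{F}\subseteq C$, where $C_{F}=\set{Z\in\left[X\right]^{\lambda}}{F\left(s\right)\subseteq Z\text{ for all }s\in Z^{<\omega}}$.

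The first key step is to ``lift'' $F$ to the level of $\kappa$. Enumerating each value $F\left(s\right)\in\left[X\right]^{\lambda}$ as $\set{f_{j}\left(s\right)}{j<\lambda}$ turns $F$ into a family of $\lambda\cdot\omega=\lambda\leq\kappa$ many finitary functions $X^{n}\to X$, so by Fact \ref{clubfacts}(3) the set $D=\set{Y\in\left[X\right]^{\kappa}}{F\left(s\right)\subseteq Y\text{ for all }s\in Y^{<\omega}}$ is a club of $\left[X\right]^{\kappa}$ (here we use $\left|X\right|\geq\kappa^{+}$, i.e. $\kappa<\left|X\right|$). Since $\mathcal{S}$ is stationary, pick $Y\in\mathcal{S}\cap D$; thus $Y$ is ``closed under $F$'', meaning $F\left(s\right)\subseteq Y$ for every $s\in Y^{<\omega}$, and in particular $F\!\restriction\! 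Y^{<\omega}$ is a well-defined map into $\left[Y\right]^{\lambda}$.

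The second key step is to run the same argument one level down, inside $Y$. If $\kappa=\lambda$ then $\left[Y\right]^{\lambda}=\{Y\}$, so $S_{Y}$ stationary forces $S_{Y}=\{Y\}$, and $Y\in C_{F}\subseteq C$ directly since $Y$ is closed under $F$; so $Y\in\mathcal{S}'\cap C$. If $\kappa>\lambda$ then $\left|Y\right|=\kappa\geq\lambda^{+}$, so applying Fact \ref{clubfacts}(3) to the $\lambda$ many functions obtained by enumerating $F\left(s\right)\in\left[Y\right]^{\lambda}$ ($s\in Y^{<\omega}$), the set $C_{F}^{Y}=\set{Z\in\left[Y\right]^{\lambda}}{F\left(s\right)\subseteq Z\text{ for all }s\in Z^{<\omega}}$ is a club of $\left[Y\right]^{\lambda}$. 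Since $S_{Y}$ is stationary in $\left[Y\right]^{\lambda}$, choose $Z\in S_{Y}\cap C_{F}^{Y}$. Then $Z\in\left[X\right]^{\lambda}$, $Z$ is closed under $F$ (as a subset of $X$, since $Z^{<\omega}\subseteq Y^{<\omega}$ and the closure condition is about the same $F$), hence $Z\in C_{F}\subseteq C$; and $Z\in S_{Y}\subseteq\mathcal{S}'$. So $Z\in\mathcal{S}'\cap C$, and $\mathcal{S}'$ is stationary.

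The only real friction I anticipate is the bookkeeping: keeping straight which ``$\kappa$'' and ``$X$'' each invocation of Fact \ref{clubfacts} is applied to, verifying the cardinality hypotheses ($\left|X\right|\geq\lambda^{+}$ for $F$, $\left|X\right|\geq\kappa^{+}$ for $D$, $\left|Y\right|\geq\lambda^{+}$ for $C_{F}^{Y}$), checking that we always have at most ``index-many'' generating functions, and separating off the degenerate case $\kappa=\lambda$ where $\left[Y\right]^{\lambda}$ collapses to a point. The conceptual content — that closure under $F$ is preserved when passing from $X$ to $Y$ to $Z$ — is straightforward.
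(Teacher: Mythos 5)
Your proof is correct and follows essentially the same route as the paper's: pull the given club of $\left[X\right]^{\lambda}$ back (via its generating functions) to a club of $\left[X\right]^{\kappa}$, pick $Y\in\mathcal{S}$ closed under those functions, restrict them to $Y$ to obtain a club of $\left[Y\right]^{\lambda}$, and intersect with $S_{Y}$. (One tiny slip: when $\kappa=\lambda$ it is not true that $\left[Y\right]^{\lambda}=\left\{ Y\right\}$, but stationarity of $S_{Y}$ still forces $Y\in S_{Y}$ because $\left\{ Y\right\}$ is itself a club of $\left[Y\right]^{\lambda}$, so your conclusion stands; in fact the uniform argument needs no case split.)
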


\begin{proof}
Suppose $D\subseteq\left[X\right]^{\lambda}$ is a club. We must show
$\mathcal{S}'\cap D\neq\emptyset$. By Fact \ref{clubfacts}(3), there
is a sequence of finitary functions $\overline{f}=\sequence{f_{i}}{i<\lambda}$
where for all $i<\lambda$, $f_{i}:X^{n_{i}}\to X$ and the set $C_{\overline{f}}\subseteq\left[X\right]^{\lambda}$
of $\lambda$-sized subsets of $X$ closed under $\overline{f}$ is
a club with $C_{\overline{f}}\subseteq D$. The subsets of $X$ of
size $\kappa$ closed under $\overline{f}$ form a club $C_{\overline{f}}^{*}\subseteq\left[X\right]^{\kappa}$,
hence $C_{\overline{f}}^{*}\cap\mathcal{S}\neq\emptyset$. Fix $Y\in C_{\overline{f}}^{*}\cap\mathcal{S}$.
Define a sequence of functions $\overline{g}=\sequence{g_{i}}{i<\lambda}$
by $g_{i}=f_{i}\upharpoonright Y^{n_{i}}$ for all $i<\lambda$. This
definition makes sense as $Y$ is closed under the functions $f_{i}$
so that $C_{\overline{f}}\cap\left[Y\right]^{\lambda}=C_{\overline{g}}$,
the subsets of $Y$ closed under $\overline{g}$, hence is a club
of $\left[Y\right]^{\lambda}$. Therefore $C_{\overline{f}}\cap\left[Y\right]^{\lambda}\cap S_{Y}\neq\emptyset$.
In particular, this shows $D\cap S'\neq\emptyset$, which completes
the proof. 
\end{proof}
The club filter on $\left[X\right]^{\omega}$ was characterized by
Kueker in terms of games of length $\omega$ \cite{kueker1972lowenheim}.
The natural analogue for games of length $\lambda$ determines a filter
on $\mathcal{P}_{\lambda^{+}}\left(X\right)$, which, in general,
differs from the club filter. In generalizing stationary logic to
quantification over sets of some uncountable size $\lambda$, it turns
out that this filter provides a more useful analogue to the club filter
on $\left[X\right]^{\omega}$ than the club filter on $\left[X\right]^{\lambda}$.
\begin{defn}
Suppose $X$ is a set and $\lambda$ is a regular cardinal. Given
a subset $F\subseteq\mathcal{P}_{\lambda^{+}}\left(X\right)$, we
define the game $G\left(F\right)$, to be the game of length $\lambda$
where Players I and II alternate playing an increasing $\lambda$
sequence of elements of $\mathcal{P}_{\lambda^{+}}\left(X\right)$.
In this game, Player II wins if and only if the union of the sets
played is in $F$. The filter $D_{\lambda}\left(X\right)$ is defined
to be the filter generated by the sets $F\subseteq\mathcal{P}_{\lambda^{+}}\left(X\right)$
in which Player II has a winning strategy in $G\left(F\right)$. We
say $Y\subseteq\mathcal{P}_{\lambda^{+}}\left(X\right)$ is $D_{\lambda}\left(X\right)$\emph{-stationary}
if $Y$ intersects every set in $D_{\lambda}\left(X\right)$.
\end{defn}

It is easy to check that every club $C\subseteq\left[X\right]^{\lambda}$
is an element of $D_{\lambda}(X)$ and, therefore, that every $S\subseteq\left[X\right]^{\lambda}$
that is $D_{\lambda}\left(X\right)$-stationary is also stationary
with respect to the usual club filter on $\left[X\right]^{\lambda}$.
It was remarked in \cite{mekler1986stationary} that if $\lambda=\lambda^{<\lambda}$,
then $D_{\lambda}\left(\lambda^{+}\right)$ is just the filter generated
by the clubs of $\lambda^{+}$ intersected with the set of ordinals
of cofinality $\lambda$ (considered as initial segments of $\lambda^{+}$).
More precisely, we have the following fact. (We omit its proof since
it is not necessary for the rest.)
\begin{fact}
\label{fancyclubfacts} Suppose $\lambda$ is an infinite cardinal
and write $S_{\lambda}^{\lambda^{+}}$ for the stationary set $\set{\alpha<\lambda^{+}}{\cof\left(\alpha\right)=\lambda}$. 
\begin{enumerate}
\item If $C\subseteq\lambda^{+}$ is a club, then $C\cap S_{\lambda}^{\lambda^{+}}\in D_{\lambda}\left(\lambda^{+}\right)$. 
\item Suppose $\lambda=\lambda^{<\lambda}$. Then $D_{\lambda}\left(\lambda^{+}\right)$
is generated by sets of the form $C\cap S_{\lambda}^{\lambda^{+}}$,
where $C\subseteq\lambda^{+}$ is a club.
\end{enumerate}
\end{fact}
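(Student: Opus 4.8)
Throughout I take $\lambda$ regular (as in the definition of $D_{\lambda}$) and identify an ordinal $\alpha<\lambda^{+}$ with the initial segment $[0,\alpha)\in\mathcal P_{\lambda^{+}}(\lambda^{+})$. The plan is to prove (1) by exhibiting an explicit winning strategy for Player II, and (2) by extracting a club from an arbitrary winning strategy of Player II.

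For (1): since any set $F$ on which Player II has a winning strategy in $G(F)$ is by definition a generator of $D_{\lambda}(\lambda^{+})$, it suffices to give Player II a winning strategy in $G\!\left(C\cap S_{\lambda}^{\lambda^{+}}\right)$. The strategy is the obvious one: at each of her turns, having seen the increasing partial play $\langle a_{i}\rangle$ so far, Player II plays some ordinal $\gamma\in C$ with $\gamma>\sup\bigcup_{i}a_{i}$ — this is possible because $C$ is unbounded in $\lambda^{+}$ and $\sup\bigcup_{i}a_{i}<\lambda^{+}$. Two checks remain. First, the union of the completed play is a genuine ordinal: every move of Player I is contained in the next move made by Player II (an initial segment lying strictly above it), so the union is downward closed and equals $\delta:=\sup\{\gamma:\gamma\text{ a move of II}\}$. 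Second, $\delta\in C\cap S_{\lambda}^{\lambda^{+}}$: Player II's moves form a strictly increasing sequence in $C$ indexed cofinally in $\lambda$, so $\delta\in C$ by closedness, and $\cof(\delta)=\lambda$ by regularity of $\lambda$. Hence Player II wins. Taking $C=\lambda^{+}$ also records that $S_{\lambda}^{\lambda^{+}}\in D_{\lambda}(\lambda^{+})$.

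For (2): the inclusion of the filter generated by the sets $C\cap S_{\lambda}^{\lambda^{+}}$ into $D_{\lambda}(\lambda^{+})$ is exactly (1), so the content is the converse, namely that every generator $F$ of $D_{\lambda}(\lambda^{+})$ — equivalently, every $F$ carrying a winning strategy $\sigma$ for Player II — contains some $C\cap S_{\lambda}^{\lambda^{+}}$ with $C$ a club. The idea is to bound the ``reach'' of $\sigma$ by a function $\rho:\lambda^{+}\to\lambda^{+}$ and take the club $C$ of its limit closure points. Define $\rho(\beta)$ to be the supremum of $\sup\sigma(p)$ as $p$ ranges over all partial plays of length $<\lambda$ in which Player II follows $\sigma$ and \emph{every move of Player I is an initial segment $[0,\gamma)$ with $\gamma\le\beta$}. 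The key observation is that there are at most $\lambda$ such $p$: each is determined by Player I's moves (Player II's being forced by $\sigma$), each such move is one of the $\le|\beta|+1\le\lambda$ ordinals $\le\beta$, and the length is $<\lambda$, so the count is at most $\lambda^{<\lambda}=\lambda$; hence $\rho(\beta)<\lambda^{+}$ (a supremum of $\le\lambda$ ordinals below $\lambda^{+}$). Now fix $\delta\in C$ with $\cof(\delta)=\lambda$ and a strictly increasing cofinal sequence $\langle\delta_{i}\rangle_{i<\lambda}$ in $\delta$. Let Player I play, at his $i$-th turn, an initial segment $[0,\beta_{i})$ with $\beta_{i}<\delta$ chosen above $\delta_{i}$ and above everything played so far, while Player II follows $\sigma$; an induction (taking suprema of $<\lambda$-many ordinals below $\delta$, using $\cof(\delta)=\lambda$ and $\delta\in C$) shows that at every stage the partial play is among those counted by $\rho\big(\sup_{k}\beta_{k}\big)$, so all of Player II's replies stay below $\delta$. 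The union of the completed play is then $\bigcup_{i}[0,\beta_{i})\cup\bigcup_{i}(\text{II-moves})=[0,\delta)=\delta$; since Player II followed the winning strategy $\sigma$, we get $\delta\in F$. Thus $C\cap S_{\lambda}^{\lambda^{+}}\subseteq F$, as needed.

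The main obstacle is making $\rho$ well-defined in (2). The naive attempt — supremum of $\sup\sigma(p)$ over \emph{all} partial plays whose moves lie in $\mathcal P_{\lambda^{+}}(\delta)=[\delta]^{\le\lambda}$ — fails, because $[\delta]^{\le\lambda}$ can have size $2^{\lambda}>\lambda$ and the supremum can then reach $\lambda^{+}$. The fix is the restriction to initial-segment moves for Player I: there are only $\le\lambda$ of these below any level (here $\lambda^{<\lambda}=\lambda$ is used, whereas (1) needed only regularity of $\lambda$), and initial-segment moves still suffice for Player I to drive the union of a play up to an arbitrary prescribed $\delta$ of cofinality $\lambda$. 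The remaining ingredients — ``the closure points of a function $\lambda^{+}\to\lambda^{+}$ form a club'' and the routine bookkeeping with suprema of $<\lambda$-many ordinals — are standard.
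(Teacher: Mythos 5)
The paper explicitly omits a proof of this Fact (deferring to the remark in Mekler--Shelah), so there is nothing to compare against; your argument is the standard one and it is correct. Part (1) is fine: Player II's responses form a strictly increasing $\lambda$-sequence through $C$ whose union is an initial segment $[0,\delta)$ with $\delta\in C$ of cofinality $\lambda$, using only regularity of $\lambda$. Part (2) correctly isolates the one non-routine point -- bounding the strategy's reach requires restricting Player I to initial-segment moves so that the count of relevant partial plays is $\lambda^{<\lambda}=\lambda$ rather than $2^{\lambda}$ -- and the closure-point club together with the induction keeping Player II's replies below $\delta$ completes the argument; this is exactly where the hypothesis $\lambda=\lambda^{<\lambda}$ enters, matching its appearance only in clause (2) of the Fact.
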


\begin{lem}
\label{realstationary} Suppose $X$ is a set of size $\lambda^{+}$,
and $\sequence{X_{\alpha}}{\alpha<\lambda^{+}}$ is an increasing
and continuous sequence from $\mathcal{P}_{\lambda^{+}}\left(X\right)$
with union $X$. Suppose $S\subseteq\mathcal{P}_{\lambda^{+}}\left(X\right)$
is $D_{\lambda}\left(X\right)$-stationary. Then the set $S_{*}=\set{\alpha<\lambda^{+}}{\cof(\alpha)=\lambda,X_{\alpha}\in S}$
is a stationary subset of $\lambda^{+}$.
\end{lem}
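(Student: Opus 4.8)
The plan is to prove that $S_* = \set{\alpha < \lambda^+}{\cof(\alpha) = \lambda,\ X_\alpha \in S}$ is stationary by showing it meets every club of $\lambda^+$. So fix a club $C \subseteq \lambda^+$; the goal is $C \cap S_* \neq \emptyset$. The strategy is to feed $C$ into the game $G(F)$ for an appropriate $F$: I would like to produce a set $F \in D_\lambda(X)$ such that membership in $F$ forces the run of the game to have union $X_\alpha$ for some $\alpha \in C$ with $\cof(\alpha) = \lambda$. Then since $S$ is $D_\lambda(X)$-stationary, $S \cap F \neq \emptyset$, and any witness $Y \in S \cap F$ is of the form $X_\alpha$ with $\alpha \in C$, $\cof(\alpha) = \lambda$, and $X_\alpha \in S$, giving $\alpha \in C \cap S_*$.

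**The construction of the winning strategy.** First I would use the increasing continuous enumeration $\sequence{X_\alpha}{\alpha < \lambda^+}$ and regularity of $\lambda^+$ to transfer the club structure. Given $C \subseteq \lambda^+$ a club, let $F = \set{Y \in \mathcal{P}_{\lambda^+}(X)}{Y = X_\alpha \text{ for some } \alpha \in C \text{ with } \cof(\alpha) = \lambda}$. I claim Player II has a winning strategy in $G(F)$, so $F \in D_\lambda(X)$. The strategy: at each stage $\beta < \lambda$ of the game, after Player I plays some $A_\beta \in \mathcal{P}_{\lambda^+}(X)$, Player II responds with $X_{\gamma_\beta}$ where $\gamma_\beta \in C$ is chosen large enough that $\bigcup_{\delta \le \beta} A_\delta \cup \bigcup_{\delta < \beta}X_{\gamma_\delta} \subseteq X_{\gamma_\beta}$ and $\gamma_\beta > \sup_{\delta < \beta}\gamma_\delta$ — possible since $C$ is unbounded, each of the sets being absorbed has size $\le \lambda < \lambda^+$, and $\lambda^+$ is regular so the supremum of the relevant ordinals is below $\lambda^+$. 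Since this is a continuous increasing sequence, the union of everything played equals $\bigcup_{\beta < \lambda} X_{\gamma_\beta} = X_{\sup_{\beta<\lambda}\gamma_\beta}$; call this ordinal $\alpha$. Then $\alpha$ is a limit of elements of $C$ (the $\gamma_\beta$), so $\alpha \in C$ as $C$ is closed, and $\cof(\alpha) = \cof(\lambda) = \lambda$ since $\lambda$ is regular and $\sequence{\gamma_\beta}{\beta<\lambda}$ is a cofinal increasing $\lambda$-sequence. Hence the union lies in $F$ and Player II wins.

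**Finishing.** With $F \in D_\lambda(X)$ established, $D_\lambda(X)$-stationarity of $S$ gives $Y \in S \cap F$. By definition of $F$, $Y = X_\alpha$ for some $\alpha \in C$ with $\cof(\alpha) = \lambda$, and $X_\alpha = Y \in S$, so $\alpha \in C \cap S_*$, completing the proof.

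**The main obstacle** I anticipate is the bookkeeping in the strategy: I must confirm that Player II, who plays at odd stages only (or whatever the convention), can still choose the $\gamma_\beta$ continuously through limit stages $\beta < \lambda$ while keeping $\sup_{\delta<\beta}\gamma_\delta < \lambda^+$ — this is where regularity of $\lambda^+$ is essential and one should be careful that the sets played by Player I at limit stages of the game are genuinely in $\mathcal{P}_{\lambda^+}(X)$, i.e. have size $< \lambda^+$, which holds because the game has length $\lambda$ and each earlier move has size $< \lambda^+$, so a union of $< \lambda^+$-many such still has size $< \lambda^+$ (using $\lambda^+$ regular). A secondary point to get right is the claim $\bigcup_{\beta < \lambda}X_{\gamma_\beta} = X_\alpha$ for $\alpha = \sup \gamma_\beta$, which is exactly continuity of the sequence $\sequence{X_\alpha}{\alpha < \lambda^+}$.
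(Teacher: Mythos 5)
Your proof is correct, and it takes a more self-contained route than the paper's. The paper first normalizes $X=\lambda^{+}$, observes that $C=\set{\alpha<\lambda^{+}}{X_{\alpha}=\alpha}$ is a club, and then invokes Fact \ref{fancyclubfacts}(1) (stated without proof) to get $C_{*}\cap C\cap S_{\lambda}^{\lambda^{+}}\in D_{\lambda}\left(\lambda^{+}\right)$ for an arbitrary club $C_{*}$, after which intersecting with $S$ finishes exactly as you do. You instead bypass the normalization and the cited fact entirely: you define $F=\set{X_{\alpha}}{\alpha\in C,\ \cof\left(\alpha\right)=\lambda}$ directly from the given continuous sequence and exhibit an explicit winning strategy for Player II in $G\left(F\right)$ (absorb all previous moves into some $X_{\gamma_{\beta}}$ with $\gamma_{\beta}\in C$ strictly increasing, then use continuity, closedness of $C$, and regularity of $\lambda$ and $\lambda^{+}$ to see the union of the play is $X_{\alpha}$ with $\alpha\in C$ of cofinality $\lambda$). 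In effect you have supplied the proof of the relevant instance of Fact \ref{fancyclubfacts}(1), which the paper omits; the paper's argument is shorter modulo that black box, while yours is elementary and checkable from the definition of $D_{\lambda}\left(X\right)$ alone. All the delicate points you flag (sizes of unions at limit stages, $\sup_{\delta<\beta}\gamma_{\delta}<\lambda^{+}$ by regularity, and $\bigcup_{\beta<\lambda}X_{\gamma_{\beta}}=X_{\sup_{\beta}\gamma_{\beta}}$ by continuity) are handled correctly.
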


\begin{proof}
As $\left|X\right|=\lambda^{+}$, we may assume $X=\lambda^{+}$.
Let $C\subseteq\lambda^{+}$ consist of the ordinals $\alpha<\lambda^{+}$
such that $X_{\alpha}=\alpha$. This set is easily seen to be a club.

Let $C_{*}\subseteq\lambda^{+}$ be a club. We must show $C_{*}\cap S_{*}\neq\emptyset$.
By Fact \ref{fancyclubfacts}(1), $C_{*}\cap C\cap S_{\lambda}^{\lambda^{+}}\in D_{\lambda}\left(X\right)$,
hence $\left(S_{\lambda}^{\lambda^{+}}\cap C\cap C_{*}\right)\cap S\neq\emptyset$.
Pick $Y$ in this intersection. Then by definition of $C$, $Y=X_{\alpha}=\alpha$
for some $\alpha\in S_{\lambda}^{\lambda^{+}}$. As $X_{\alpha}\in S$,
we have $\alpha\in S_{*}$. This shows $S_{*}\cap C_{*}\neq\emptyset$.
\end{proof}
\begin{lem}
\label{fancyrestriction} Suppose $A\subseteq B$ and $S\subseteq\mathcal{P}_{\lambda^{+}}\left(B\right)$
is $D_{\lambda}\left(B\right)$-stationary. Then the set $S\upharpoonright A=\set{X\cap A}{X\in S}$
is $D_{\lambda}\left(A\right)$-stationary.
\end{lem}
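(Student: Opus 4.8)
The plan is to transfer the argument behind Fact~\ref{jechfacts}(2) from the club filter to the game filter $D_\lambda$. Fix an arbitrary $F\in D_\lambda(A)$; we must produce $X\in S$ with $X\cap A\in F$. By the definition of $D_\lambda(A)$ together with a routine dovetailing of finitely many strategies (so that the generating sets may be taken closed under finite intersection, since a length-$\lambda$ game can interleave $\le\lambda$ many of Player~II's strategies), we may assume that Player~II has a winning strategy $\sigma$ in the game $G(F)$ played inside $\mathcal{P}_{\lambda^{+}}(A)$; if $F$ merely contains such a set, the same strategy still wins $G(F)$. Put $F'=\set{W\in\mathcal{P}_{\lambda^{+}}(B)}{W\cap A\in F}$. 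The key claim is that $F'\in D_\lambda(B)$. Granting this, $D_\lambda(B)$-stationarity of $S$ gives some $X\in S\cap F'$, and then $X\cap A\in F$ while $X\cap A\in S\upharpoonright A$, which is exactly what is needed.

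To prove $F'\in D_\lambda(B)$ I would exhibit a winning strategy $\tau$ for Player~II in $G(F')$ obtained by running a shadow copy of the $A$-game governed by $\sigma$. Along a play of $G(F')$ with $\subseteq$-increasing moves $W_\alpha\subseteq B$ for $\alpha<\lambda$, whenever Player~I has just played, Player~II feeds $W_\alpha\cap A$ to the shadow $A$-game as Player~I's move there, lets $\sigma$ respond with some $V\in\mathcal{P}_{\lambda^{+}}(A)$, and plays $W_\alpha\cup V$ in $G(F')$; this move is legal since $V\subseteq A\subseteq B$ and $|W_\alpha\cup V|\le\lambda$ (each set in sight is a union of $\le\lambda$ sets of size $\le\lambda$, using regularity of $\lambda$), and at limit stages both games pass to unions. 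One checks by induction on $\alpha<\lambda$ that the shadow $A$-play is a legal $\subseteq$-increasing play to which $\sigma$ applies — here one uses that $\sigma$ produces moves extending the given history and that $V\subseteq A$, so that $(W_\alpha\cup V)\cap A=(W_\alpha\cap A)\cup V$ — and that the $\alpha$-th move of the shadow play equals $W_\alpha\cap A$. At stage $\lambda$, the outcome $W=\bigcup_{\alpha<\lambda}W_\alpha$ of the $G(F')$-play satisfies $W\cap A=\bigcup_{\alpha<\lambda}(W_\alpha\cap A)$, which is the outcome of a play of $G(F)$ according to $\sigma$ and hence lies in $F$; thus $W\in F'$, so $\tau$ is winning and $F'\in D_\lambda(B)$.

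The step I expect to be the main obstacle is the bookkeeping around the move structure of these length-$\lambda$ games: verifying carefully that the shadow $A$-play really is a legal play to which $\sigma$ applies (the ``padding'' $W_\alpha$ inserted into Player~II's $B$-moves must not disturb the monotonicity on the $A$-side), and correctly tracking whose turn it is at limit stages — this requires committing to the precise convention (continuous at limits, or a designated player moves) and adapting the construction of $\tau$ accordingly. Everything else — the size estimates, the passage through the filter $D_\lambda$, and the final intersection with the $D_\lambda(B)$-stationary set $S$ — is routine once the strategy transfer is set up cleanly. Conceptually the lemma records that a winning strategy for Player~II over the larger base $B$ pushes forward, along $X\mapsto X\cap A$, to meet every $D_\lambda(A)$-large set, which is the game-theoretic analogue of the club-filter statement in Fact~\ref{jechfacts}(2).
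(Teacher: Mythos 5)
Your proposal is correct and follows essentially the same route as the paper: reduce to showing that the pullback $F^{B}=\set{W\in\mathcal{P}_{\lambda^{+}}\left(B\right)}{W\cap A\in F}$ lies in $D_{\lambda}\left(B\right)$ for each $F\in D_{\lambda}\left(A\right)$, and then transfer Player II's winning strategy by playing $W_{\alpha}\cup\sigma\left(\text{shadow history}\right)$ so that the trace on $A$ is a play according to $\sigma$. Your extra care about dovetailing finitely many strategies (since $D_{\lambda}$ is only generated by the sets where Player II wins) and about legality of the shadow play at limit stages is a minor tightening of points the paper passes over with ``we may assume.''
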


\begin{proof}
It is enough to show that if $F\in D_{\lambda}\left(A\right)$ then
$F^{B}=\set{X\in\mathcal{P}_{\lambda^{+}}\left(B\right)}{X\cap A\in F}\in D_{\lambda}\left(B\right)$.
We may assume that there is some winning strategy $f$ for Player
II in the game $G\left(F\right)$, since $F\in D_{\lambda}\left(A\right)$.
That is, the function $f$ is defined so that if, at stage $i$, Player
I has played $\sequence{A_{j}}{j\leq i}$ then $f\left(\sequence{A_{j}}{j\leq i}\right)$
outputs the play for Player II.

Now we will define a winning strategy for Player II in the game $G\left(F^{B}\right)$.
At stage $i$, if Player I has played $\sequence{A_{j}}{j\leq i}$,
Player II plays $B_{i}=A_{i}\cup f\left(\sequence{A_{j}\cap A}{j\leq i}\right)$.
As the rules of the game require that the sets are increasing, we
have 
\[
A_{i}\cap A\subseteq f\left(\sequence{A_{j}\cap A}{j\leq i}\right)\subseteq A,
\]
hence $B_{i}\cap A=f\left(\sequence{A_{j}\cap A}{j\leq i}\right)$.
It follows that

\begin{center}%
\begin{tabular}{l|ccccc}
I  & $A_{0}\cap A$  &  & $A_{1}\cap A$  &  & $\cdots$ \tabularnewline
\hline 
II  &  & $B_{0}\cap A$  &  & $B_{1}\cap A$  & $\cdots$ \tabularnewline
\end{tabular}\end{center} is a play according to $f$ in $G\left(F\right)$. Therefore,
\[
\left(\bigcup_{i<\lambda}A_{i}\cup B_{i}\right)\cap A=\bigcup_{i<\lambda}\left(A_{i}\cap A\right)\cup\left(B_{i}\cap A\right)\in F,
\]
which shows $\bigcup_{i<\lambda}A_{i}\cup B_{i}\in F^{B}$. We have
shown that Player II has a winning strategy in $G\left(F^{B}\right)$
so $F^{B}\in D_{\lambda}\left(B\right)$.
\end{proof}

\subsection{Stationary logic}

The stationary logic $L\left(\textsf{aa}\right)$ was introduced in
\cite{shelah1975generalized} (where it was called $L\left(Q_{\aleph_{1}}^{ss}\right)$).
The logic is defined as follows: given a first-order language $L$,
expand the language with countably many new unary predicates $\set{S_{i}}{i<\omega}$
and a new quantifier $\textsf{aa}$. The formulas of $L$ in $L\left(\textsf{aa}\right)$
are the the smallest class containing the first-order formulas of
$L$, closed under the usual first-order formation rules together
with the rule that if $\varphi$ is a formula, then $\left(\textsf{aa}S_{i}\right)\varphi$
is also a formula, for any new unary predicate $S_{i}$. Satisfaction
is defined as usual, together with the rule that $M\models\left(\textsf{aa}S\right)\varphi\left(S\right)$
if and only if $M\models\varphi\left(S\right)$ when $S^{M}=X$ for
``almost all'' $X\in\left[M\right]^{\omega}$\textemdash that is,
$\set{X\in\left[M\right]^{\omega}}{\text{ if }S^{M}=X\text{ then }M\models\varphi\left(S\right)}$
contains a club of $[M]^{\omega}$. We define the quantifier $\textsf{stat}$
dually: $M\models\left(\textsf{stat}S\right)\varphi\left(S\right)$
if and only if $M\models\neg\left(\textsf{aa}S\right)\neg\varphi\left(S\right)$.
Note that $M\models\left(\textsf{stat}S\right)\varphi\left(S\right)$
if and only if $\set{X\in\left[M\right]^{\omega}}{\text{ if }S^{M}=X\text{ then }M\models\varphi\left(S\right)}$
is stationary. Given an $L$-structure $M$, we write $\text{Th}_{\textsf{aa}}\left(M\right)$
for the set of $L\left(\text{aa}\right)$-sentences satisfied by $M$.
We refer the reader to \cite[Section 1]{barwise1978stationary} for
a detailed treatment of stationary logic.

Later work by Mekler and the third-named author extended stationary
logic, which quantifies over \emph{countable sets}, to a logic that
permits quantification over sets of higher cardinality \cite{mekler1986stationary}.
For $\lambda$ a regular cardinal, the logic $L\left(\aa^{\lambda}\right)$
is defined analogously to $L\left(\aa\right)$, with semantics defined
so that $M\models\left(\mathsf{aa}^{\lambda}S\right)\varphi\left(S\right)$
if and only if $\set{X\in\left[M\right]^{\lambda}}{\text{ if }S^{M}=X\text{ then }M\models\varphi(S)}\in D_{\lambda}\left(M\right)$.
The quantifier $\mathsf{stat}^{\lambda}$ is also understood dually:
$M\models\left(\mathsf{stat}^{\lambda}S\right)\varphi\left(S\right)$
if and only if $M\models\neg\left(\mathsf{aa}^{\lambda}S\right)\neg\varphi\left(S\right)$.
If $T$ is an $L\left(\mathsf{aa}\right)$-theory, one obtains an
$L\left(\mathsf{aa}^{\lambda}\right)$-theory by replacing the quantifier
$\mathsf{aa}$ with $\mathsf{aa}^{\lambda}$. We call this theory
the $\lambda$\emph{-interpretation of} $T$. By working with $D_{\lambda}\left(M\right)$
instead of the full club filter on $\left[M\right]^{\lambda}$, one
is able to relate satisfiability of an $L\left(\mathsf{aa}\right)$-theory
to the satisfiability of its $\lambda$-interpretation. Below, the
``moreover'' clause about $\lambda$-saturation is not stated in
\cite{mekler1986stationary}, but is immediate from the proof.
\begin{fact}
\label{lambdainterp} \cite[Theorem 1.3]{mekler1986stationary} Suppose
$\lambda=\lambda^{<\lambda}$ and $T$ is a consistent $L\left(\aa\right)$-theory
of size at most $\lambda$. Then the $\lambda$-interpretation of
$T$ has a model of size at most $\lambda^{+}$. In fact, there is
such a model which is, moreover, $\lambda$-saturated.
\end{fact}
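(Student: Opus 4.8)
The natural approach is a Lowenheim--Skolem-style construction: produce the model as the union of a continuous increasing chain of length $\lambda^{+}$ of $L(\aa)$-elementary submodels of a fixed model of $T$, each of size $\lambda$, and then verify that the $\aa$-quantifier over such a union is correctly computed by the filter $D_{\lambda}$ of the union. The base case $\lambda=\aleph_{0}$ is exactly the downward Lowenheim--Skolem theorem for $L(\aa)$ (recall that $D_{\aleph_{0}}$ on $\left[X\right]^{\omega}$ is Kueker's game description of the usual club filter), so we may assume $\lambda\geq\aleph_{1}$. Using that $L(\aa)$ admits downward Lowenheim--Skolem to size $\lambda$, fix a model $M^{*}\models T$ and build $\sequence{M_{\alpha}}{\alpha<\lambda^{+}}$, continuous and increasing, with each $M_{\alpha}\prec_{\aa}M^{*}$ of size $\lambda$ and $M_{\alpha}\prec_{\aa}M_{\alpha+1}$; here the hypothesis $\lambda^{<\lambda}=\lambda$ is what keeps all the intermediate closures of size $\lambda$. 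Let $N=\bigcup_{\alpha<\lambda^{+}}M_{\alpha}$, a structure of size $\lambda^{+}$.

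The main point is then that $N$, interpreting $\aa$ as $\aa^{\lambda}$, is a model of the $\lambda$-interpretation of $T$. The engine is the club-reflection made available by $\lambda=\lambda^{<\lambda}$: identifying $N$ with $\lambda^{+}$ and arranging (as in the proof of Lemma \ref{realstationary}) that $M_{\alpha}=\alpha$ on a club, Fact \ref{fancyclubfacts}(2) gives that a definable $F\subseteq\mathcal{P}_{\lambda^{+}}(N)$ lies in $D_{\lambda}(N)$ precisely when $\set{\alpha<\lambda^{+}}{\cof(\alpha)=\lambda\text{ and }M_{\alpha}\in F}$ contains a club of $\lambda^{+}$. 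Thus $\aa^{\lambda}$ in $N$ only "sees" the links $M_{\alpha}$ of the chain, and one shows, by induction on $L(\aa)$-formulas, that for each formula there is a club of $\alpha$ on which the truth value computed in $N$ (reading $\aa$ as $\aa^{\lambda}$, and allowing the set variables to be filled by $M_{\alpha}$ and earlier links) agrees with the value forced by $L(\aa)$-elementarity of $M_{\alpha}\prec_{\aa}M^{*}$ together with $M^{*}\models T$; the Boolean and first-order quantifier cases use only $\lambda^{+}$-completeness of the club filter on $\lambda^{+}$. Applied to the sentences of $T$ this makes $N$ a model of the $\lambda$-interpretation of $T$. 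Finally, $\lambda$-saturation is obtained by a routine bookkeeping folded into the construction of the chain: at successors, realize in $M_{\alpha+1}$ all relevant types over $<\lambda$-sized subsets of $M_{\alpha}$; since $\lambda^{<\lambda}=\lambda$ there are only $\lambda$ such subsets per stage and $(\lambda^{+})^{<\lambda}=\lambda^{+}$ in all, and only $2^{<\lambda}=\lambda$ types over each, so the process closes off in $\lambda^{+}$ steps without leaving the size-$\lambda$ regime at each link.

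The step I expect to be genuinely hard is the $\aa$-quantifier case of the induction above. The obstacle is a semantic mismatch: $\aa$ in a link $M_{\alpha}$ ranges over \emph{countable} subsets of $M_{\alpha}$, whereas $\aa^{\lambda}$ in $N$, cut down along the chain, presents $M_{\alpha}$ together with its own filtration $\sequence{M_{\beta}}{\beta<\alpha}$ by \emph{$\lambda$-sized} pieces; so one cannot just transport the truth of $\aa$-sentences between $M_{\alpha}$ and $N$, and must instead build --- and preserve along the entire chain --- a correspondence between the countable-filter semantics of $L(\aa)$ on the links and the $D_{\lambda}$-semantics of $L(\aa^{\lambda})$ on $N$. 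It is exactly in keeping this correspondence coherent that $\lambda=\lambda^{<\lambda}$ enters essentially (via Fact \ref{fancyclubfacts}(2), which is what lets $D_{\lambda}$ on $\lambda^{+}$ behave like a club filter); the downward Lowenheim--Skolem input for $L(\aa)$, the cardinal arithmetic, and the saturation bookkeeping are all routine once this bridge is set up.
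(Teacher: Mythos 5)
This Fact is quoted by the paper from Mekler--Shelah \cite{mekler1986stationary}; the paper gives no proof of it (only the remark that the $\lambda$-saturation clause ``is immediate from the proof'' there), so your attempt is measured against the actual argument in that reference rather than anything in this paper. Measured that way, there is a genuine gap, and it sits exactly where you flag it: the $\aa$-quantifier step. Your plan is to take a standard model $M^{*}\models T$ (countable-club semantics), extract a continuous $\prec_{\aa}$-chain $\sequence{M_{\alpha}}{\alpha<\lambda^{+}}$ of $\lambda$-sized links, and argue that the union $N$ satisfies the $\lambda$-interpretation because Fact \ref{fancyclubfacts}(2) reduces $D_{\lambda}(N)$ to ``club-many $\alpha$ with $M_{\alpha}\in F$.'' But that reduction only tells you \emph{which} sets $\aa^{\lambda}$ ranges over in $N$; it gives no bridge between the statement $N\models\varphi(M_{\alpha})$ in the $\lambda$-interpretation (where the set variable is interpreted by a $\lambda$-sized set in a $\lambda^{+}$-sized model) and the statement $M^{*}\models(\aa S)\varphi(S)$ in the countable interpretation (where the witnesses are countable subsets of $M^{*}$). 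Already for the atomic occurrences of the set predicate there is no induction hypothesis converting a countable witness $X\subseteq M^{*}$ into one of the links $M_{\alpha}$, or vice versa; the two quantifiers simply range over disjoint families of objects, and ``preserve a correspondence along the chain'' is a restatement of the theorem, not a step toward it. A secondary gap: the existence of $\lambda$-sized $L(\aa)$-elementary substructures $M_{\alpha}\prec_{\aa}M^{*}$ for uncountable $\lambda$ is itself a nontrivial reflection statement (downward L\"owenheim--Skolem for $L(\aa)$ gives models of size $\aleph_{1}$, not elementary substructures of prescribed size $\lambda$ of a given model), and you use it without justification.

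The actual proof does not pass through a standard model at all. It is proof-theoretic: by the completeness theorem for $L(\aa)$ (Barwise--Kaufmann--Makkai), consistency of $T$ in the standard semantics is equivalent to consistency of $T$ together with the $L(\aa)$-axioms in the weak (first-order, two-sorted) semantics; one then builds a model of the $\lambda$-interpretation of size $\lambda^{+}$ directly, as the union of a $\lambda^{+}$-chain of weak models in which the designated family of ``small'' sets is steered, using the axioms and the hypothesis $\lambda=\lambda^{<\lambda}$ (which bounds the number of requirements per stage), so that in the limit the designated family generates the game filter $D_{\lambda}$ and the weak satisfaction of $\aa$ coincides with $\aa^{\lambda}$. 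The $\lambda$-saturation is indeed folded into that same bookkeeping, as you suggest, but only after the construction has been rerouted through the weak semantics. Your cardinal arithmetic and the base case $\lambda=\aleph_{0}$ are fine; the missing content is the completeness-theorem detour, without which the semantic transfer you need cannot be carried out.
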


The following easy observation is also useful:
\begin{lem}
\label{nottoosmall} Suppose $\varphi$ is a first-order formula,
possibly with parameters from $M$ and $\left|\varphi\left(M\right)\right|>\aleph_{0}$.
Then if $M'\models\text{Th}_{\textsf{aa}}\left(M\right)$ in the $\lambda$-interpretation,
then $\left|\varphi\left(M'\right)\right|>\lambda$.
\end{lem}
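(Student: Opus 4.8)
The plan is to encode the second-order statement ``$\varphi$ has uncountably many realizations'' by a single sentence of $L(\aa)$ and then let the $\lambda$-interpretation do the rest. Expanding the language by constants naming the parameters of $\varphi$ if necessary (so that $\text{Th}_{\aa}(M)$ is taken in a language in which $\varphi$ has no free parameters), consider the $L(\aa)$-sentence
\[
\sigma \;:=\; (\aa S)\,(\exists x)\bigl(\varphi(x)\wedge\neg S(x)\bigr).
\]
First I would verify that for every $L$-structure $N$ one has $N\models\sigma$ if and only if $|\varphi(N)|>\aleph_{0}$. Interpreting $S$ as $X\in[N]^{\omega}$, the inner first-order formula holds exactly when $\varphi(N)\not\subseteq X$; if $\varphi(N)$ is uncountable this is true for every countable $X$, so the set of witnesses is all of $[N]^{\omega}$ and $\sigma$ holds, whereas if $\varphi(N)$ is countable then $\set{X\in[N]^{\omega}}{\varphi(N)\subseteq X}$ is a club on which the inner formula fails, so the witness set of $\sigma$ is disjoint from a club and hence contains none, i.e.\ $\sigma$ fails. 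In particular our hypothesis $|\varphi(M)|>\aleph_{0}$ gives $\sigma\in\text{Th}_{\aa}(M)$.

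Next, since $M'\models\text{Th}_{\aa}(M)$ in the $\lambda$-interpretation, $M'$ satisfies the $\lambda$-interpretation of $\sigma$; unwinding the semantics of $\aa^{\lambda}$, this says
\[
F\;:=\;\set{X\in[M']^{\lambda}}{\varphi(M')\not\subseteq X}\;\in\;D_{\lambda}(M').
\]
Suppose towards a contradiction that $|\varphi(M')|\le\lambda$. Then $C:=\set{X\in[M']^{\lambda}}{\varphi(M')\subseteq X}$ is a club of $[M']^{\lambda}$: it is closed under increasing unions, and it is unbounded because for any $Y\in[M']^{\lambda}$ the set $Y\cup\varphi(M')$ has size $\lambda$ and belongs to $C$. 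As observed in the paragraph preceding the lemma, every club of $[M']^{\lambda}$ lies in $D_{\lambda}(M')$, so $C\in D_{\lambda}(M')$; but $F\cap C=\emptyset$, contradicting that $D_{\lambda}(M')$ is a (proper) filter. Hence $|\varphi(M')|>\lambda$.

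There is essentially no real obstacle here: the only idea is the translation of ``uncountably many'' into the quantifier $\aa$, after which everything is automatic. The only points meriting a line of care are the equivalence $N\models\sigma\iff|\varphi(N)|>\aleph_{0}$ and the properness of $D_{\lambda}(M')$ (so that it cannot contain two disjoint sets); the degenerate possibility $|M'|<\lambda$ does not arise, since then $[M']^{\lambda}=\emptyset$ and $M'$ would already fail $\sigma$.
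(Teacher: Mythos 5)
Your proof is correct and is essentially the paper's argument run in the opposite direction: the paper assumes $\left|\varphi\left(M'\right)\right|\leq\lambda$, notes that $\set{X}{\varphi\left(M'\right)\subseteq X}$ is a club hence in $D_{\lambda}\left(M'\right)$, and transfers $\left(\aa S\right)\forall x\left(\varphi\left(x\right)\to S\left(x\right)\right)$ back to $M$ to contradict uncountability of $\varphi\left(M\right)$, whereas you transfer the (equivalent, dual) sentence $\left(\aa S\right)\exists x\left(\varphi\left(x\right)\wedge\neg S\left(x\right)\right)$ forward from $M$ to $M'$ and contradict properness of $D_{\lambda}\left(M'\right)$. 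Both hinge on the same two facts (clubs lie in $D_{\lambda}$, and the filter is proper), so this counts as the same approach.
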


\begin{proof}
Suppose not. Then $\set{S\in\left[M'\right]^{\lambda}}{\varphi\left(M'\right)\subseteq S}$
is a club of $\left[M'\right]^{\lambda}$ hence an element of $D_{\lambda}\left(M'\right)$.
Therefore $M'\models\left(\mathsf{aa}^{\lambda}S\right)\forall x\left(\varphi\left(x\right)\to S\left(x\right)\right)$.
As $M'\models\text{Th}_{\textsf{aa}}\left(M'\right)$ in the $\lambda$-interpretation,
$M\models\left(\mathsf{aa}S\right)\forall x\left(\varphi\left(x\right)\to S\left(x\right)\right)$,
so $\varphi\left(M\right)$ is countable, a contradiction.
\end{proof}

\subsection{Reduction to a countable language}
\begin{rem}
\label{rem:changing language doesn't matter for K-dividing}Suppose
that $T$ is an NSOP$_{1}$ theory in the language $L$. Suppose that
$M\models T$ and $\varphi\left(x,y\right)$ is any formula. Then
for any language $L'\subseteq L$ containing $\varphi$, and any $b\in\Mm$,
$\varphi\left(x,b\right)$ Kim-divides over $M$ in $L$ iff $\varphi\left(x,b\right)$
Kim-divides over $M':=M\restriction L'$ (in the sense of $T\restriction L'$).
Indeed, this follows from Kim's lemma for Kim-dividing (Fact \ref{kimslemma})
and the fact that if $\bar{b}$ is a coheir sequence in $L$ over
$M$ starting with $b$, then it is also in $L'$. 
\end{rem}

\begin{lem}
\label{reducetocountable} Suppose $T$ is an NSOP$_{1}$ theory in
the language $L$, $M\models T$ and for some $p\in S\left(M\right)$,
the set 
\[
S=\set{N\prec M}{\left|N\right|=\left|T\right|,\text{ }p\text{ Kim-divides over }N}
\]
is stationary in $\left[M\right]^{\left|T\right|}$. Then there is
a countable sublanguage $L'\subseteq L$ and a stationary set $S'\subseteq\left[M\right]^{\omega}$
so that, setting $p'=p\upharpoonright L$, we have that for all $N'\in S'$,
$p'$ Kim-divides over $N'$.
\end{lem}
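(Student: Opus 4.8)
The plan is threefold: (a) collapse the Kim-dividing witnesses attached to the various $N\in S$ to a single formula $\varphi$ and a single inconsistency number $k$; (b) pass to a countable sublanguage $L'$ containing $\varphi$; and (c) replace each $|T|$-sized $N\in S$ by a stationary family $S_N\subseteq[N]^{\omega}$ of countable elementary substructures over which $p\upharpoonright L'$ still Kim-divides, and then glue the $S_N$ together with Lemma~\ref{stationaryunion}. We may assume $|M|>|T|$; the case $|M|=|T|$ is handled identically, with $S_1=\{M\}$ below and $S'=S_M$ at the end.

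For (a): for each $N\in S$, since $p$ is complete and Kim-divides over $N$, fix an $L$-formula $\varphi_N(x;y)$ and a finite tuple $c_N$ from $M$ with $\varphi_N(x;c_N)\in p$ Kim-dividing over $N$. By Kim's lemma for Kim-dividing (Fact~\ref{kimslemma}, using that $T$ is NSOP$_1$) together with Fact~\ref{average}, fix a global $N$-finitely satisfiable $q_N\supseteq\tp(c_N/N)$ and $k_N<\omega$ such that for any Morley sequence $\sequence{c_N^i}{i<\omega}$ in $q_N$ over $N$ with $c_N^0=c_N$, the set $\set{\varphi_N(x;c_N^i)}{i<\omega}$ is $k_N$-inconsistent. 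There are only $|T|$ many pairs $(\varphi_N,k_N)$, and the nonstationary ideal on $[M]^{|T|}$ is $|T|^+$-complete (Fact~\ref{clubfacts}(1)); hence there is a stationary $S_1\subseteq S$ and fixed $\varphi(x;y)\in L$, $k<\omega$ with $\varphi_N=\varphi$ and $k_N=k$ for all $N\in S_1$.

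For (b)--(c): fix any countable $L'\subseteq L$ with $\varphi\in L'$, and put $p'=p\upharpoonright L'$. For $N\in S_1$, let $\bar q_N=q_N\upharpoonright L'$: a global $N$-finitely satisfiable type for $T\upharpoonright L'$ extending $\tp_{L'}(c_N/N)$, for which $\sequence{c_N^i}{i<\omega}$ remains a Morley sequence over $N$ with $\set{\varphi(x;c_N^i)}{i<\omega}$ still $k$-inconsistent. Apply Lemma~\ref{clubfinding}(1) to the theory $T\upharpoonright L'$ (of cardinality $\omega$), the model $N\upharpoonright L'$, and the type $\bar q_N$: the set $S_N:=C_{\bar q_N}$ of $N'\prec N\upharpoonright L'$ with $|N'|=\omega$ and $\bar q_N^{\otimes\omega}\upharpoonright N'=r^{\otimes\omega}\upharpoonright N'$ for some global $N'$-finitely satisfiable $r$ lies in the club filter on $[N]^{\omega}$, hence is stationary there. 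For any $N'\in S_N$: from $N'\prec N\upharpoonright L'\prec M\upharpoonright L'$ we get $N'\models T\upharpoonright L'$; the Morley sequence $\sequence{c_N^i}{i<\omega}$ over $N\supseteq N'$ realizes $\bar q_N^{\otimes\omega}\upharpoonright N'=r^{\otimes\omega}\upharpoonright N'$, so it is a Morley sequence in $r$ over $N'$; and since all such realize the complete type $r^{\otimes\omega}\upharpoonright N'$ and $r\upharpoonright N'=\tp_{L'}(c_N/N')$, the $k$-inconsistency of $\set{\varphi(x;c_N^i)}{i<\omega}$ shows $\varphi(x;c_N)$ $k$-Kim-divides over $N'$ via $r$. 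As $\varphi(x;c_N)\in p'$, we conclude $p'$ Kim-divides over $N'$.

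Finally, Lemma~\ref{stationaryunion} (with $X=M$, $\kappa=|T|$, $\lambda=\omega$, $\mathcal S=S_1$ and $S_Y=S_N$ for $Y=N\in S_1$) gives that $S':=\bigcup_{N\in S_1}S_N$ is stationary in $[M]^{\omega}$, and by the previous paragraph $p'$ Kim-divides over every $N'\in S'$. I expect the crux to be step (c), the passage from $|T|$-sized to countable bases: one must first pass to $L'$ so that the finitely-satisfiable-Morley-sequence bookkeeping takes place in a setting where the relevant cardinal is $\omega$, whereupon Lemma~\ref{clubfinding} produces a club of \emph{countable} submodels of $N$ over which Kim-dividing persists, and then Lemma~\ref{stationaryunion} reassembles a single stationary subset of $[M]^{\omega}$; collapsing to one formula in step (a) is by contrast routine once one invokes $|T|^+$-completeness of the club filter.
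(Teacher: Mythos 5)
Your proof is correct and follows essentially the same route as the paper: uniformize the Kim-dividing formula via $|T|^{+}$-completeness of the club filter, pass to a countable sublanguage containing it, produce for each $N\in S$ a club of countable submodels over which the (restricted) coheir Morley sequence still witnesses $k$-Kim-dividing, and glue with Lemma~\ref{stationaryunion}. The only cosmetic difference is that you invoke Lemma~\ref{clubfinding}(1) directly where the paper cites Remark~\ref{rem:changing language doesn't matter for K-dividing} and the proof of Theorem~\ref{clubinbase}, which rest on the same mechanism.
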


\begin{proof}
 For each $N\in S$, choose some $\varphi_{N}\left(x;b_{N}\right)\in p$
such that $\varphi\left(x;b_{N}\right)$ Kim-divides over $N$. By
Fact \ref{clubfacts}(1), the club filter on $\left[M\right]^{\left|T\right|}$
is $|T|^{+}$-complete, so for any a partition of a stationary set
into $\left|T\right|$ many pieces, we may find some piece which is
stationary. Therefore we may assume there is some formula $\varphi$
so that $\varphi_{N}\left(x;b_{N}\right)=\varphi\left(x;b_{N}\right)$
for all $N\in S$. Let $L'$ be any countable sublanguage of $L$
containing $\varphi$. By Remark \ref{rem:changing language doesn't matter for K-dividing}
and (the proof of) Theorem \ref{clubinbase}, for each $N\in S$,
there is a club $C_{N}\subseteq\left[N\right]^{\omega}$ of countable
$L'$-elementary substructures over which $\varphi\left(x;b_{N}\right)$
Kim-divides. By Lemma \ref{stationaryunion}, $S'=\bigcup_{N\in S}C_{N}$
is a stationary subset of $\left[M\right]^{\omega}$. By definition
of $S'$, if $N'\in S'$, then there is some $\varphi\left(x;b_{N'}\right)\in p$
such that $\varphi\left(x;b_{N'}\right)$ Kim-divides over $N'$. 
\end{proof}

\subsection{Stretching}
\begin{lem}
\label{aleph1} Suppose $T$ is NSOP$_{1}$, $\left|T\right|=\aleph_{0}$,
$M\models T$, and there is $p\in S\left(M\right)$ so that the set
\[
S_{0}=\set{N\prec M}{\left|N\right|=\aleph_{0}\text{ and }p\text{ Kim-divides over }N}
\]
is stationary. Then, given any regular uncountable cardinal $\lambda=\lambda^{<\lambda}$,
there is a model $M'\models T$, $\left|M'\right|=\lambda^{+}$, a
formula $\varphi\left(x;y\right)$, and a type $p_{*}$ over $M'$
so that 
\[
S_{0}'=\set{N'\prec M'}{\left|N'\right|=\lambda,\text{ there is }\varphi\left(x;a'_{N}\right)\in p_{*}\text{ that Kim-divides over }N'}
\]
is $D_{\lambda}\left(M'\right)$-stationary.
\end{lem}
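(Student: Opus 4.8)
The plan is to code the hypothesised failure of local character at $\aleph_{0}$ into a theory of stationary logic, to stretch it with the Mekler--Shelah transfer theorem (Fact \ref{lambdainterp}), and to read off $M'$, $\varphi$ and $p_{*}$ from the resulting model of the $\lambda$-interpretation.

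First I would reduce to a single formula. Since $|T|=\aleph_{0}$ and the club filter on $[M]^{\omega}$ is $\aleph_{1}$-complete (Fact \ref{clubfacts}(1)), the pigeonhole argument from the proof of Lemma \ref{reducetocountable} produces a stationary $S_{1}\subseteq S_{0}$, a formula $\varphi(x;y)$ and a $k<\omega$ such that each $N\in S_{1}$ carries some $b_{N}\in M$ with $\varphi(x;b_{N})\in p$ and $\varphi(x;b_{N})$ $k$-Kim-dividing over $N$; by Kim's lemma (Fact \ref{kimslemma}) I also fix a coheir sequence $\langle e^{N}_{i}:i<\omega\rangle$ over $N$ with $e^{N}_{0}=b_{N}$ and $\{\varphi(x;e^{N}_{i})\}_{i<\omega}$ $k$-inconsistent. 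Note that $b_{N}\notin N$ (a formula with parameters from $N$ Kim-dividing over $N$ must be inconsistent), and that $|M|\ge\aleph_{1}$: otherwise the club filter on $[M]^{\omega}$ is principal at $M$ and ``$S_{0}$ stationary'' would say that $p$ Kim-divides over the model $M$, which is false.

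Next I would build the coded structure $\mathcal{N}$: a Skolemisation of $T$ whose universe is the Skolem hull of $M\cup\{c\}\cup\bigcup_{N\in S_{1}}\langle e^{N}_{i}:i<\omega\rangle$ for a fixed $c\models p$, expanded by a unary predicate $P$ interpreted as $M$, constants naming $c$, a predicate $Q$ naming $\{b_{N}:N\in S_{1}\}$, and function symbols $\langle g_{i}:i<\omega\rangle$ picking out, from each $b\in Q$, a coheir sequence over the associated $N$ starting at $b$ (harmlessly thinning $S_{1}$ so that $N\mapsto b_{N}$ is injective). Let $T^{+}\subseteq\mathrm{Th}_{\textsf{aa}}(\mathcal{N})$ consist of: (i) first-order axioms saying $P$ is an $L$-elementary substructure closed under the Skolem functions with $\mathrm{tp}(\bar c/P)$ as in $\mathcal{N}$, together with the $\textsf{aa}$-sentence saying $|P|$ is uncountable; (ii) one $\textsf{stat}$-sentence $(\textsf{stat}\,S)\,\psi_{0}(S)$ asserting that there is $\bar{a}\in P\cap Q$ with $\varphi(\bar{c};\bar{a})$ and $\neg\exists x\bigwedge_{i<k}\varphi\big(x;g_{i}(\bar{a})\big)$; and (iii) a countable schema of $\textsf{aa}$-sentences $(\textsf{aa}\,S)\,\psi_{n}(S)$ expressing, for a $\psi_{0}$-witness $\bar a$, the $n$-th instance of ``$\langle g_{i}(\bar a)\rangle$ is indiscernible over and finitely satisfiable in $S$'', i.e.\ that $\langle g_{i}(\bar a)\rangle$ is a coheir sequence over $S$. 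All of these hold in $\mathcal{N}$: the $\textsf{stat}$-sentence because $S_{1}$ is stationary and the $b_{N}$ work, each $\textsf{aa}$-sentence because on the club of countable elementary substructures $N$ the coded sequence over $N$ is the genuine $\langle e^{N}_{i}\rangle$, which satisfies every coheir condition over $N$. \textbf{This encoding is the main obstacle.} ``Kim-divides over $N$'' and ``coheir sequence over $N$'' are irreducibly infinitary, and the device is to split the infinitary content into a countable $\textsf{aa}$-schema (legitimate because the club filter on $[M]^{\omega}$, and likewise $D_{\lambda}$, is closed under countable intersections, so the schema conjoins after transfer) while isolating the single genuinely stationary assertion as one $\textsf{stat}$-sentence, and then recovering Kim-dividing on the far side from Fact \ref{kimslemma} together with Fact \ref{fact:Kim Morley is consistent}; the delicate point is to arrange $Q$ and the coded sequences so that, over the generic $S$, a $\psi_{0}$-witness genuinely carries a coheir sequence over $S$ and not over some other model.

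Finally I would apply Fact \ref{lambdainterp}: since $T^{+}$ is consistent of size $\aleph_{0}\le\lambda=\lambda^{<\lambda}$, its $\lambda$-interpretation has a $\lambda$-saturated model $\mathcal{M}'$ of size $\le\lambda^{+}$, and Lemma \ref{nottoosmall} (applied via the $\textsf{aa}$-sentence in (i) saying $|P|$ is uncountable) forces $|\mathcal{M}'|=|P^{\mathcal{M}'}|=\lambda^{+}$. Put $M'=P^{\mathcal{M}'}\models T$, $c'=\bar{c}^{\mathcal{M}'}$, $p_{*}=\mathrm{tp}(c'/M')$, and keep $\varphi$. The transferred sentences say $\{S\in[\mathcal{M}']^{\lambda}:\psi_{0}(S)\}$ is $D_{\lambda}(\mathcal{M}')$-stationary and each $\{S:\psi_{n}(S)\}\in D_{\lambda}(\mathcal{M}')$; since a stationary set meets everything in the filter, the common part $\mathcal{S}^{*}$ is still $D_{\lambda}(\mathcal{M}')$-stationary, and on each $S\in\mathcal{S}^{*}$ there is $\bar{a}_{S}\in M'$ with $\varphi(x;\bar{a}_{S})\in p_{*}$ and $\langle g_{i}(\bar{a}_{S})\rangle$ a coheir sequence over $S\cap M'$ that is $k$-inconsistent for $\varphi$ -- hence, by Fact \ref{kimslemma} (with Fact \ref{fact:Kim Morley is consistent} covering that $\mathcal{M}'$ is only $\lambda$-saturated), $\varphi(x;\bar{a}_{S})$ Kim-divides over $S\cap M'$. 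By the restriction Lemma \ref{fancyrestriction} for $M'\subseteq\mathcal{M}'$, the set $\{S\cap M':S\in\mathcal{S}^{*}\}$ is $D_{\lambda}(M')$-stationary; intersecting with the club of $\lambda$-sized elementary substructures of $M'$ and unfixing notation, we obtain a $D_{\lambda}(M')$-stationary set contained in $S'_{0}$, so $S'_{0}$ is $D_{\lambda}(M')$-stationary, as required. I expect essentially all the work beyond bookkeeping to sit in the coding paragraph, specifically in checking that $\psi_{0}$ together with the schema $\{\psi_{n}\}$ really pins ``Kim-dividing over the generic small submodel'' down.
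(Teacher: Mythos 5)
Your global strategy --- reduce to a single formula $\varphi$ and a single $k$ by $\aleph_{1}$-completeness of the club filter, code the stationary failure into an $L(\textsf{aa})$-theory, stretch with Fact \ref{lambdainterp}, and decode --- is exactly the paper's, but the coding step, which you correctly flag as the crux, does not work as you set it up. Your schema (iii) asserts $(\textsf{aa}\,S)\,\psi_{n}(S)$, where $\psi_{n}$ says that the \emph{pre-coded} sequence $\langle g_{i}(\bar a)\rangle$ attached to a $\psi_{0}$-witness $\bar a$ satisfies the $n$-th coheir condition over $S$. For these sentences to lie in $\mathrm{Th}_{\textsf{aa}}(\mathcal{N})$ you would need: for club-many countable $S$, \emph{every} $\bar a\in S\cap Q$ satisfying the witness condition has $\langle g_{i}(\bar a)\rangle$ indiscernible over, and finitely satisfiable in, $S$. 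But $g_{i}(b_{N})=e^{N}_{i}$ is one fixed sequence, chosen as a coheir sequence over the single model $N$, whereas the $\textsf{aa}$-quantifier ranges over club-many countable $S$ containing $b_{N}$; such $S$ generically contain elements breaking the indiscernibility of $\langle e^{N}_{i}\rangle$ (already in the random graph a vertex adjacent to $e^{N}_{0}$ but not to $e^{N}_{1}$ does this, and club-many $S$ even contain $e^{N}_{1}$ itself). So the indiscernibility instances of your schema are false in $\mathcal{N}$ and $T^{+}\not\subseteq\mathrm{Th}_{\textsf{aa}}(\mathcal{N})$. The defect cannot be repaired by existentially quantifying over witnesses inside each $\psi_{n}$ separately (different $n$ would then get different witnesses, and the countable intersection no longer pins down one coheir sequence), nor by letting $g$ depend on $S$ (function symbols cannot). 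Two smaller points: your $\psi_{0}$ does not mention $S$, so $(\textsf{stat}\,S)\psi_{0}(S)$ carries no stationarity content; and the ``harmless thinning'' making $N\mapsto b_{N}$ injective need not preserve stationarity.

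The idea your proposal is missing is the paper's device of \emph{not} pre-coding the sequences: one works in $\mathcal{H}=\left(H\left(\chi\right),\in,M,\tilde{M},L,T,p\right)$ with $\tilde{M}$ an $\aleph_{1}$-saturated elementary extension of $M$, so that ``there exist $c\in M^{l}$ and $f\in{}^{\omega}(\tilde{M}^{l})$ such that $f$ is an $\left(X\cap M\right)$-indiscernible sequence finitely satisfiable in $X\cap M$ starting at $c$, $\varphi\left(x;c\right)\in p$, and $\set{\varphi\left(x;f_{i}\right)}{i<\omega}$ is $k$-inconsistent'' is a \emph{single} first-order formula $\Phi\left(X\right)$ of set theory: the existential quantifier over the whole sequence is internal to $H\left(\chi\right)$, so the witnessing sequence may depend on $X$, and $\aleph_{1}$-saturation of $\tilde{M}$ supplies a witness for every $X$ with $X\cap M\in S$ (after lifting $S$ to a stationary subset of $\left[\mathcal{H}\right]^{\omega}$ via Fact \ref{jechfacts}(1)). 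One then needs only the single sentence $(\textsf{stat}\,X)\Phi\left(X\right)$, transfers it by Fact \ref{lambdainterp}, and restricts the resulting $D_{\lambda}$-stationary set to $M'$ by Lemma \ref{fancyrestriction}. Your final decoding paragraph is essentially right once this is in place, except that Fact \ref{fact:Kim Morley is consistent} is not what is needed there: after transfer the index set is nonstandard, and one observes that a standard $\omega$-segment of an $N$-indiscernible sequence finitely satisfiable in $N$ is already a Morley sequence in a global $N$-finitely satisfiable type, which witnesses $k$-Kim-dividing directly.
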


\begin{proof}
As no type Kim-divides over its domain, it follows that $M$ is uncountable.
For each $N\in S_{0}$, there is some formula $\varphi_{N}\left(x;a_{N}\right)\in p$
and $k_{N}<\omega$ so that $\varphi\left(x;a_{N}\right)$ $k_{N}$-Kim-divides
over $N$ via a Morley sequence in some global $N$-finitely satisfiable
type. As the club filter on $\left[M\right]^{\omega}$ is $\aleph_{1}$-complete,
Fact \ref{clubfacts}(1), there are $\varphi$ and $k$ so that for
some stationary $S\subseteq S_{0}$, we have $N\in S$ implies $\varphi_{N}\left(x;a_{N}\right)=\varphi\left(x;a_{N}\right)$
and $k_{N}=k$.

Let $l=\left|a_{N}\right|$ for all $N\in S$ and let $\tilde{M}$
be an $\aleph_{1}$-saturated elementary extension of $M$. Let $\chi$
be a sufficiently large regular cardinal so that all objects of interest
are contained in $H\left(\chi\right)$. In particular, we may choose
$\chi$ so that $\tilde{M},{^{\omega}\tilde{M}},T$, $L$, and $p$
are contained in $H\left(\chi\right)$, together with a bijection
to $\omega$ witnessing the countable cardinality of $L$, and we
consider the structure 
\[
\mathcal{H}=\left(H\left(\chi\right),\in,M,\tilde{M},L,T,p\right).
\]
By Fact \ref{jechfacts}(1), the set $S_{*}=\set{X\in\mathcal{\left[H\right]}^{\omega}}{X\cap M\in S}$
is a stationary subset of $\mathcal{\left[H\right]}^{\omega}$.

Let $\Phi\left(X\right)$ be the formula in the language of $\mathcal{H}$
together with a new predicate $X$ that naturally asserts: there exists
$c\in M^{l}$, such that $\varphi\left(x;c\right)\in p$ and such
that there exists $f\in{^{\omega}\left(\tilde{M}^{l}\right))}$ such
that: 

$X\cap M$ is an elementary substructure of $M$. 

$f=\sequence{f_{i}}{i<\omega}$ is an $\left(X\cap M\right)$-indiscernible
sequence such that $\text{tp}\left(f_{i}/\left(X\cap M\right)f_{<i}\right)$
is finitely satisfiable in $\left(X\cap M\right)$. 

$f\left(0\right)=c$. 

$\set{\varphi\left(x;f_{i}\right)}{i<\omega}$ is $k$-inconsistent. 

We first show the following:
\begin{claim*}
$\mathcal{H}\models\left(\textsf{stat}X\right)\Phi\left(X\right)$.
\end{claim*}
\begin{proof}[\emph{Proof of claim}]
As $S_{*}$ is stationary, it suffices to show that if $X\in S_{*}$
and $S^{\mathcal{H}}=S_{*}$ then $\mathcal{H}\models\Phi\left(S\right)$.
Recall that if $X\in S_{*}$, then $X\cap M\in S$ so $X\cap M$ is
a countable elementary substructure of $M$, and $\varphi\left(x;a_{X\cap M}\right)$
is a formula in $p$ that $k$-Kim-divides over $X\cap M$. As $\tilde{M}$
is $\aleph_{1}$-saturated, there is a coheir sequence $\sequence{a_{i}}{i<\omega}$
over $X\cap M$ in $\tilde{M}$ with $a_{0}=a_{X\cap M}$ and $\set{\varphi\left(x;a_{i}\right)}{i<\omega}$
$k$-inconsistent. Put $c=a_{0}$ and let $f\in{^{\omega}(\tilde{M}^{l})}$
be defined by $f_{i}=a_{i}$, we easily have (1)-(4) satisfied, proving
the claim. 
\end{proof}
By Fact \ref{lambdainterp}, there is $\mathcal{H}'$ which is a model
of the $\lambda$-interpretation of $\text{Th}_{aa}\left(\mathcal{H}\right)$
with $\left|\mathcal{H}'\right|=\lambda^{+}$, $\mathcal{H}'=\left(\mathcal{H}',\in',M',\tilde{M}',L',p'\right)$.
As $L$ and $T$ are coded by natural numbers, the language $L$ is
contained in $L'$ and thus the definable set $\set{x\in\mathcal{H}'}{x\in\tilde{M}'}$
may be regarded as the domain of an $L'$-structure whose reduct to
$L$ is a model of $T$ and likewise for $M$. Moreover $M'\prec_{L}\tilde{M}'$
and $\left|M'\right|=\lambda^{+}$, by Lemma \ref{nottoosmall}. As
$\mathcal{H}'\models\left(\textsf{stat}^{\lambda}X\right)\Phi\left(X\right)$,
there is a $D_{\lambda}\left(\mathcal{H}'\right)$-stationary set
$S_{*}'\subseteq\left[\mathcal{H}'\right]^{\lambda}$ witnessing this.
Let $S'=S_{*}'\upharpoonright M'$\textemdash i.e. $S'=\set{X\cap M'}{X\in S_{*}'}$.
By Lemma \ref{fancyrestriction}, $S'$ is $D_{\lambda}\left(M'\right)$-stationary.
Let $p_{*}=p'\upharpoonright L$. To conclude the proof, it suffices
to establish the following:
\begin{claim*}
$p_{*}$ is a type over $M'$ and if $N\in S'$, then $p_{*}$ $k$-Kim-divides
over $N$ via some $\varphi\left(x;a_{N}'\right)\in p_{*}$.
\end{claim*}
\begin{proof}[\emph{Proof of claim}]
It is clear that $p_{*}$ is a consistent type over $M'$. Now fix
$N\in S'$. By definition of $S'$, $N=X\cap M'$ for some $X\in\left[\mathcal{H}'\right]^{\lambda}$
such that $\mathcal{H}'\models\Phi\left(S\right)$ when $S^{\mathcal{H}'}=X$.
It follows that for some $b\in M'$, there is an $N$-indiscernible
sequence $\sequence{b_{i}}{i\in I}$ with $b_{0}=b$, such that $\text{tp}\left(b_{i}/Nb_{<i}\right)$
is finitely satisfiable in $N$, $\varphi\left(x;b_{0}\right)\in p'$
and $\set{\varphi\left(x;b_{i}\right)}{i\in I}$ is $k$-inconsistent,
where $I$ denotes the (possibly non-standard) natural numbers of
$\mathcal{H}'$. By indiscernibility, $\sequence{b_{i}}{i\in I}$
is a Morley sequence over $N$ in a global $N$-finitely satisfiable
type, which shows $\varphi\left(x;b_{0}\right)$ $k$-Kim-divides
over $N$. This completes the proof. 
\end{proof}
\renewcommand{\qedsymbol}{}
\end{proof}

\subsection{The main lemma}
\begin{lem}
\label{hard direction}(Main Lemma) Suppose $T$ is a complete theory,
$M\models T$ is a model with $\left|M\right|\geq\left|T\right|$,
and for some $p\in S\left(M\right)$, the set 
\[
S_{0}=\set{N\in\left[M\right]^{\left|T\right|}}{N\prec M,p\text{ Kim-divides over }N}
\]
is stationary. Then $T$ has SOP$_{1}$.
\end{lem}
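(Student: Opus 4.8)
The plan is to argue by contradiction: assume $T$ is NSOP$_1$ and, starting from the stationary set $S_{0}$, manufacture an SOP$_1$ array as in Fact \ref{karyversion}; this contradicts the assumption and hence proves $T$ has SOP$_1$. The first step is to pass to a countable sublanguage. Since reducts of NSOP$_1$ theories are NSOP$_1$ and Kim-dividing is insensitive to enlarging the language (Remark \ref{rem:changing language doesn't matter for K-dividing}), Lemma \ref{reducetocountable} lets me assume henceforth that $\left|T\right|=\aleph_{0}$, with a type $p$ and a stationary $S_{0}\subseteq\left[M\right]^{\omega}$ of elementary submodels of $M$ over which $p$ Kim-divides. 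The second step is to stretch: fix a regular uncountable cardinal $\lambda$ with $\lambda=\lambda^{<\lambda}$ and invoke Lemma \ref{aleph1}, obtaining a $\lambda$-saturated model $M'\models T$ of size $\lambda^{+}$, a formula $\varphi\left(x;y\right)$, and a type $p_{*}$ over $M'$ such that the set $S_{0}'$ of $N'\prec M'$ of size $\lambda$ over which some $\varphi\left(x;a'_{N}\right)\in p_{*}$ Kim-divides is $D_{\lambda}\left(M'\right)$-stationary.

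Third, I would convert this to ordinary stationarity in an ordinal. Fix an increasing continuous chain $\sequence{M'_{\alpha}}{\alpha<\lambda^{+}}$ of elementary submodels of $M'$ of size $\lambda$ with union $M'$; by Lemma \ref{realstationary} the set $S_{*}=\set{\alpha<\lambda^{+}}{\cof\left(\alpha\right)=\lambda,\ M'_{\alpha}\in S_{0}'}$ is stationary in $\lambda^{+}$. For each $\alpha\in S_{*}$ choose $\varphi\left(x;a_{\alpha}\right)\in p_{*}$ and $k_{\alpha}<\omega$ witnessing that $\varphi\left(x;a_{\alpha}\right)$ $k_{\alpha}$-Kim-divides over $M'_{\alpha}$ via some global $M'_{\alpha}$-finitely satisfiable type; since $\left|a_{\alpha}\right|=\left|y\right|$ is fixed and the club filter on $\lambda^{+}$ is $\lambda^{+}$-complete (Fact \ref{clubfacts}(1)), I may thin $S_{*}$ to a stationary set on which $k_{\alpha}=k$ is constant. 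Taking $c\models p_{*}$ gives $\models\varphi\left(c;a_{\alpha}\right)$ for all $\alpha\in S_{*}$, so $\set{\varphi\left(x;a_{\alpha}\right)}{\alpha\in S_{*}}$ is consistent.

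The final, and main, step is to assemble this data into an SOP$_1$ array for $\varphi$ — a length-$\omega$ array already suffices by Fact \ref{karyversion}. The $a_{\alpha}$ ($\alpha\in S_{*}$), which all realize the single consistent type $p_{*}$, will serve as the consistent column, and the inconsistent column is built from the Kim-dividing witnesses. Concretely, one picks an increasing $\sequence{\alpha_{i}}{i<\omega}$ in $S_{*}$ so that each finite initial segment $\bar{c}_{<i}$ of the array already lies inside $M'_{\alpha_{i}}$ — possible since the chain is continuous with union $M'$, $S_{*}$ is unbounded, and $M'$ is $\lambda$-saturated, so the whole construction can be run inside $M'$ — sets $c_{i,0}:=a_{\alpha_{i}}$, and lets $c_{i,1}$ be a member of a coheir sequence over $M'_{\alpha_{i}}$ witnessing that $\varphi\left(x;a_{\alpha_{i}}\right)$ Kim-divides; such a $c_{i,1}$ is then automatically conjugate to $c_{i,0}$ over $\bar{c}_{<i}$. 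The nontrivial point is to make the choices at successive stages cohere so that the \emph{whole} column $\set{\varphi\left(x;c_{i,1}\right)}{i<\omega}$ is $k$-inconsistent and not merely each locally generated piece; this is done by amalgamating the successive coheir sequences into a single Kim-dividing coheir sequence, using symmetry of Kim-independence and the independence theorem over models — both available under the contradictory hypothesis that $T$ is NSOP$_1$ — and then reading off uniform inconsistency from Kim's lemma (Fact \ref{kimslemma}). Once the array is in place, Fact \ref{karyversion} gives that $\varphi$ witnesses SOP$_1$, and since reducing back to the original language preserves this, $T$ has SOP$_1$. (Equivalently, one could try to extract from $S_{*}$ a continuous increasing Kim-forking chain of $\lambda$-sized models and feed it into the implication (2)$\Rightarrow$(1) of Theorem \ref{mainthm(1)-(6)} for $T$ with constants for $M'_{\alpha_{0}}$, but that route meets the same difficulty at the limit stages of the chain.)

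I expect this last step to be the main obstacle. Everything up through the stationary set $S_{*}\subseteq\lambda^{+}$ is a repackaging of the cited lemmas, but aligning the locally defined inconsistent witnesses — which live over \emph{different} base models $M'_{\alpha}$ — into one genuinely $k$-inconsistent column, while simultaneously maintaining the consistent column and all the conjugacy conditions, is exactly where a merely stationary (rather than club) supply of base models is not obviously enough, and it is the reason the argument must route through stationary logic: the stretching lemma supplies the extra room, namely a $\lambda$-saturated model of size $\lambda^{+}$ in which the Kim-dividing data reflects to a stationary set of $\cof=\lambda$ levels, in which this alignment can be carried out. A secondary but necessary bookkeeping point is to choose $\sequence{\alpha_{i}}{i<\omega}$ and the chain so that at each stage $\bar{c}_{<i}$ sits inside the base model $M'_{\alpha_{i}}$ used at that stage while the $\alpha_{i}$ remain in $S_{*}$.
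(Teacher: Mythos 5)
Your first three steps reproduce the paper's architecture faithfully: reduce to a countable language via Lemma \ref{reducetocountable}, stretch with Lemma \ref{aleph1}, pass to a stationary set $S_{*}\subseteq\lambda^{+}$ of $\cof=\lambda$ levels of a filtration via Lemma \ref{realstationary}, and stabilize $k$. (Two minor caveats: the existence of a regular uncountable $\lambda=\lambda^{<\lambda}$ is not a theorem of ZFC, which is why the paper first forces with a L\'evy collapse, noting that this preserves both the stationarity hypothesis and NSOP$_1$; and the counting step below also needs $\kappa^{\aleph_{0}}=\kappa$ and $2^{\aleph_{0}}\le\kappa$, which the collapse supplies.) The problem is the final step, which you yourself flag as ``the main obstacle'': you do not actually carry it out, and the mechanism you gesture at --- amalgamating coheir sequences over the distinct bases $M'_{\alpha_{i}}$ ``using symmetry of Kim-independence and the independence theorem'' --- is not the paper's mechanism and does not obviously produce a globally $k$-inconsistent column. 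Each $c_{i,1}$ you propose witnesses $k$-inconsistency only along a Morley sequence over its own base $M'_{\alpha_{i}}$; the independence theorem amalgamates types of realizations over a \emph{single} model and gives no handle on making witnesses over an increasing chain of bases cohere into one $k$-inconsistent set.

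The missing idea is a reflection-plus-stabilization argument. For each $\alpha\in S_{*}$ the paper applies Lemma \ref{clubfinding}(2) to push the Kim-dividing of $\varphi\left(x;c_{\alpha}\right)$ down to a \emph{countable} $N_{\alpha}\prec M'_{\alpha}$ together with a global $N_{\alpha}$-finitely satisfiable type; the map $\alpha\mapsto\min\set{\beta<\alpha}{N_{\alpha}\subseteq M'_{\beta}}$ is regressive on $S_{*}$ (here $\cof\left(\alpha\right)=\lambda>\aleph_{0}$ is used), so Fodor's lemma plus the bounds $\kappa^{\aleph_{0}}=\kappa$ and $2^{\aleph_{0}}\le\kappa$ yield a stationary subset on which the countable base $N'_{0}$ \emph{and} the restriction $s^{\otimes\omega}|_{N'_{0}}$ of the Morley-sequence type are constant. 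Iterating this $\omega$ times produces elements $e_{i}$, an increasing chain of countable models $N'_{i}\ni e_{<i}$, and coheirs $s_{i}$ with $s_{j}^{\otimes\omega}|_{N'_{i}}=s_{i}^{\otimes\omega}|_{N'_{i}}$ for $j\ge i$ and $e_{j}\equiv_{N'_{i}}e_{i}$; the inconsistent column is then obtained directly by realizing $f_{j}\models s_{i_{j}}|_{N'_{i_{j}}\overline{e}f_{>j}}$ in decreasing order of $j$, since the coherence of the $s_{i}^{\otimes\omega}$ restrictions makes $\left(f_{j}\right)_{j}$ look like a Morley sequence over the least base, over which $\varphi$ $k$-Kim-divides. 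No symmetry or independence theorem is needed at this stage; the SOP$_{1}$ array then comes from compactness and Fact \ref{karyversion}. As written, your proof identifies where the difficulty lies but does not contain the idea that resolves it, so it has a genuine gap.
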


\begin{proof}
Towards contradiction suppose $T$ is NSOP$_{1}$. By Lemma \ref{reducetocountable},
there is a countable sublanguage $L'\subseteq L$ and a stationary
set $S_{0}'\subseteq\left[M\right]^{\omega}$ such that if $p'=p\upharpoonright L'$
then for all $N\in S$, $N\prec_{L'}M$ and $p'$ Kim-divides over
$N$. Therefore, we may assume for the rest of the proof that $T$
is countable.

By forcing with the Lévy collapse $\text{Coll}\left(\lambda^{+},2^{\lambda}\right)$
for a sufficiently large cardinal, we may assume there is some an
uncountable cardinal $\kappa=\kappa^{<\kappa}$, namely $\kappa=\lambda^{+}$,
while preserving the situation. By Lemma \ref{aleph1}, there is a
model $M'\models T$ with $\left|M'\right|=\kappa^{+}$ and a type
$p'$ over $M'$ so that 
\[
S''_{0}=\set{N\in\left[M'\right]^{\kappa}}{N\prec M'\text{ and some }\varphi\left(x;c_{N}\right)\in p'\text{ Kim-divides over }N}
\]
is $D_{\kappa}\left(M'\right)$-stationary. Let $\sequence{M_{\alpha}}{\alpha<\kappa^{+}}$
be a continuous and increasing sequence of $\kappa$-sized elementary
substructures of $M'$ with union $M'$. The set $S=\set{\alpha<\kappa^{+}}{\text{cf}\left(\alpha\right)=\kappa,M_{\alpha}\in S_{0}''}$
is a stationary subset of $\kappa^{+}$ by Lemma \ref{realstationary}.
By intersecting with a club, we may also assume that for all $\alpha\in S$,
$M_{\alpha}$ contains $c_{M_{\beta}}$ for all $\beta\in\alpha\cap S$.

From here, the proof closely follows the proof of \cite[Theorem 4.5]{kaplan2017kim}.
For each $\alpha\in S$, let $c_{\alpha}$ denote $c_{M_{\alpha}}$
and let $r_{\alpha}$ be a global $M_{\alpha}$-finitely satisfiable
type extending $\tp\left(c_{\alpha}/M_{\alpha}\right)$. By reducing
$S$, we may assume that there is some $k<\omega$ such that $r$
witnesses that $\varphi\left(x;c_{\alpha}\right)$ $k$-Kim-divides
over $M_{\alpha}$. For each $\alpha\in S$, apply Lemma \ref{clubfinding}(2)
to choose a countable $N_{\alpha}\prec M_{\alpha}$ such that $r_{\alpha}^{\otimes\omega}|_{N_{\alpha}}$
is the type of a Morley sequence in some global $N_{\alpha}$-finitely
satisfiable type and hence such that $\varphi\left(x;c_{\alpha}\right)$
$k$-Kim-divides over $N_{\alpha}$. Define $\rho:S\to\kappa^{+}$
by $\rho\left(\alpha\right)=\min\set{\beta<\alpha}{N_{\alpha}\subseteq M_{\beta}}$.
This is well-defined and pressing down on $S$ as $\kappa$ is regular
and uncountable. By Fodor's lemma, there is $S'\subseteq S$ such
that $\rho$ is constant on $S'$, say with constant value $\beta_{0}$.
As $\left|M_{\beta_{0}}\right|=\kappa$, there are $\leq\kappa^{\aleph_{0}}=\kappa$
many choices for $N_{\alpha}\subseteq M_{\beta_{0}}$ so there is
a stationary $S''\subseteq S'$ and $N'_{0}$ so that $N_{\alpha}=N'_{0}$
for all $\alpha\in S''$. As there are $\leq2^{\aleph_{0}}\leq\kappa$
choices for $r_{\alpha}^{\otimes\omega}|_{N'_{0}}$, there is a stationary
$S_{*}\subseteq S''$ such that $r_{\alpha}^{\otimes\omega}|_{N'_{0}}$
is constant, with value $s_{0}^{\otimes\omega}|_{N'_{0}}$ for some
global coheir $s_{0}$ over $N'_{0}$. Let $\delta_{0}=\min S_{0}$,
$e_{0}=c_{\delta_{0}}$.

Repeating this process $\omega$ many times, we find an increasing
sequence $\sequence{\delta_{i}}{i<\omega}$ of ordinals in $\kappa^{+}$,
an increasing sequence of models $\sequence{N_{i}'}{i<\omega}$, $e_{i}\in M'$
for $i<\omega$ and global $N_{i}'$-finitely satisfiable types $s_{i}$
such that: 

$N_{i}'$ contains $e_{<i}$, $\varphi\left(x;e_{j}\right)$ is $k$-Kim-dividing
over $N_{i}'$ for every $i\leq j$, $s_{i}$ is a global coheir over
$N'_{i}$ extending $\text{tp}\left(e_{i}/N_{i}'\right)$ and $e_{j}\equiv_{N'_{i}}e_{i}$
for all $j\geq i$. In addition, $s_{j}^{\otimes\omega}|_{N'_{i}}=s_{i}^{\otimes\omega}|_{N_{i}'}$
for all $j\geq i$. 

Denote $\overline{e}=\langle e_{i}:i<\omega\rangle$. Note that $\set{\varphi\left(x;e_{i}\right)}{i<\omega}$
is a subset of $p'$, hence consistent. Now, exactly as in the claim
in the proof of \cite[Theorem 4.5]{kaplan2017kim}, we can show that
if $i_{0}<\ldots<i_{n-1}<\omega$ and for each $j<n$, $f_{j}\models s_{i_{j}}|_{N'_{i_{j}}\overline{e}f_{>j}}$
then $e_{i_{j}}\equiv_{e_{i_{<j}}f_{<j}}f_{j}$ for all $j<n$ and
$\set{\varphi\left(x;f_{j}\right)}{j<n}$ is $k$-inconsistent. By
compactness, we can find an array $\sequence{\left(c_{i,0},c_{i,1}\right)}{i<\omega}$
so that $\set{\varphi\left(x,c_{i,0}\right)}{i<\omega}$ is consistent,
$\set{\varphi\left(x,c_{i,1}\right)}{i<\omega}$ is $k$-inconsistent,
and $c_{i,0}\equiv_{\overline{c}_{<i}}c_{i,1}$ for all $i<\omega$.
By Fact \ref{karyversion}, we obtain SOP$_{1}$, a contradiction.
\end{proof}
\begin{cor}
\label{cor:(1) implies (4)} Theorem \ref{thm:Main-intro} (1) $\implies$
(4) holds.
\end{cor}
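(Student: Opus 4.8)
The plan is to obtain Corollary~\ref{cor:(1) implies (4)} as the contrapositive of the Main Lemma (Lemma~\ref{hard direction}), using only elementary closure properties of the club filter.

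First I would handle the degenerate case $\left|M\right|=\left|T\right|$ separately: here the club filter on $\left[M\right]^{\left|T\right|}$ is the principal ultrafilter generated by $\{M\}$ (Definition~\ref{clubdef}), and since no type Kim-divides over its own domain, $M$ itself lies in the set appearing in clause~(4), so that set is trivially a club. Hence we may assume $\left|M\right|>\left|T\right|$, so that $\left|M\right|\geq\left|T\right|^{+}$ and all the facts of Section~2.2 apply with $\kappa=\left|T\right|$ and $X=M$.

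Now suppose $T$ is NSOP$_{1}$ and fix $p\in S\left(M\right)$. By the contrapositive of Lemma~\ref{hard direction}, the set
\[
S_{0}=\set{N\in\left[M\right]^{\left|T\right|}}{N\prec M,\ p\text{ Kim-divides over }N}
\]
is not stationary in $\left[M\right]^{\left|T\right|}$; so there is a club $C\subseteq\left[M\right]^{\left|T\right|}$ with $C\cap S_{0}=\emptyset$. By Example~\ref{exa:elementary substructures}, the collection $C_{0}$ of elementary substructures of $M$ of size $\left|T\right|$ is a club of $\left[M\right]^{\left|T\right|}$, and since the club filter is $\left|T\right|^{+}$-complete (Fact~\ref{clubfacts}(1)), $C\cap C_{0}$ is again a club. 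Every $N\in C\cap C_{0}$ is an elementary substructure of $M$ of size $\left|T\right|$ which is not in $S_{0}$, i.e.\ $p$ does not Kim-divide over $N$; thus $C\cap C_{0}$ is a club contained in the set of clause~(4), which is exactly what is required.

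There is no real obstacle, since all the work has already been done in the Main Lemma (the stretching argument via stationary logic together with the Lévy collapse, plus the absoluteness of SOP$_{1}$). The only points of care are the edge case $\left|M\right|=\left|T\right|$ and remembering to intersect the club extracted from the non-stationarity of $S_{0}$ with the club of elementary substructures, so that the resulting club lands inside $\left\{N\in\left[M\right]^{\left|T\right|} : N\prec M\right\}$.
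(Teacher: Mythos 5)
Your proposal is correct and is exactly the argument the paper intends: the corollary is stated as an immediate consequence of Lemma \ref{hard direction}, obtained by taking the contrapositive and noting that non-stationarity of the set of "bad" substructures yields a club (after intersecting with the club of elementary substructures) inside the set of clause (4). Your explicit treatment of the degenerate case $\left|M\right|=\left|T\right|$ and the intersection with $C_{0}$ are just the routine bookkeeping the paper leaves implicit.
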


\section{\label{sec:Dual-local-character}Dual local character}
\begin{defn}
\label{def:strong Kim-dividing} ($T$ any theory) Say that a formula
$\varphi\left(x,a\right)$ \emph{strongly Kim-divides} over a model
$M$ if for every global $M$-invariant type $q\supseteq\tp\left(a/M\right)$,
$\varphi\left(x,a\right)$ Kim-divides\emph{ over }$M$\emph{ via
}$q$.
\end{defn}

\begin{rem}
\label{rem:strong Kim-dividing =00003D Kim div in NSOP1} By Fact
\ref{kimslemma}, strong Kim-dividing = Kim-dividing iff $T$ is NSOP$_{1}$. 
\end{rem}

\begin{defn}
A \emph{dual type} (over $A$) in $x$ is a set $F$ of ($A$-)definable
sets in $x$ such that for some $k<\omega$, it is $k$-inconsistent.
Say that $F$ \emph{dually divides} over a model $N$, if every $X\in F$
which is not definable over $N$ divides over $N$. Similarly define
when $F$ \emph{dually Kim-divides} over $N$ and when $F$ \emph{strongly
dually Kim-divides} over $N$. 
\end{defn}

\begin{thm}
\label{thm:dual local character}The following are equivalent for
a complete theory $T$.
\begin{enumerate}
\item $T$ is NSOP$_{1}$. 
\item There is no continuous increasing sequence of $|T|$-sized models
$\sequence{M_{i}}{i<\left|T\right|^{+}}$ with union $M$ and a dual
type $F$ over $M$ such that $F\restriction M_{i+1}$ does not strongly
dually Kim-divide over $M_{i}$ for all $i<|T|^{+}$. 
\item Assume that $M\models T$ and $F$ a dual type over $M$. Then there
is a stationary subset $S$ of $\left[M\right]^{\left|T\right|}$
such that if $N\in S$ then $N\prec M$ and $F$ strongly dually Kim-divides
over $N$. 
\item (Dual local character) Same as (3) but $S$ is a club. 
\end{enumerate}
\end{thm}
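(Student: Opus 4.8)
The plan is to mirror the proof of Theorem~\ref{mainthm(1)-(6)}, establishing $(1)\Rightarrow(4)\Rightarrow(3)\Rightarrow(2)\Rightarrow(1)$. The implication $(4)\Rightarrow(3)$ is immediate, and $(3)\Rightarrow(2)$ goes exactly as in the non-dual case: if $\langle M_i\rangle$ and $F$ witness the failure of (2), then $\{M_i : i<|T|^+\}$ is a club of $[M]^{|T|}$ by Lemma~\ref{niceclubs}, so it meets the stationary set provided by (3) in some $M_i$ over which $F$ strongly dually Kim-divides; since $F\restriction M_{i+1}\subseteq F$, every non-$M_i$-definable member of $F\restriction M_{i+1}$ is a non-$M_i$-definable member of $F$ and hence strongly Kim-divides over $M_i$, contradicting the choice of the sequence.

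For $(2)\Rightarrow(1)$ I would argue contrapositively, adapting the last paragraph of the proof of Theorem~\ref{mainthm(1)-(6)}. From an SOP$_1$ formula, via Fact~\ref{karyversion} and Skolemization, fix a formula $\varphi(x,y)$, $k<\omega$, and an indiscernible array $\langle\bar c_i\rangle_{i<|T|^+}$ with $\bar c_i=(c_{i,0},c_{i,1})$, $c_{i,0}\equiv_{\bar c_{<i}}c_{i,1}$, $\{\varphi(x,c_{i,0})\}$ consistent and $\{\varphi(x,c_{i,1})\}$ $k$-inconsistent; set $N_i=\dcl(\bar c_{<i})$ in $\Mm^{sk}$ and reindex along the limit ordinals. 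Then $F=\{\varphi(x,c_{\delta,1}) : \delta\ \mathrm{limit}\}$ is $k$-inconsistent, hence a dual type over $M:=\bigcup_\delta N_\delta$. As in the non-dual argument $\varphi(x,c_{\delta,1})$ Kim-divides over $N_\delta$ (so in particular is not definable over $N_\delta$), but it does \emph{not} strongly Kim-divide over $N_\delta$: choosing a global $N_\delta$-finitely satisfiable $q$ (in the variables of $\bar c$) for which the tail $\langle\bar c_j : j\ge\delta\rangle$, suitably oriented, is a Morley sequence, the pushforward $q_0$ of $q$ to the $0$-coordinate has $\langle c_{j,0} : j\ge\delta\rangle$ as a Morley sequence over $N_\delta$, so consistency of $\{\varphi(x,c_{j,0})\}$ shows $\varphi(x,c_{\delta,0})$ does not Kim-divide over $N_\delta$ via $q_0$; since $c_{\delta,0}\equiv_{N_\delta}c_{\delta,1}$, a conjugate of $q_0$ is an $N_\delta$-invariant type witnessing that $\varphi(x,c_{\delta,1})$ does not Kim-divide over $N_\delta$ via it. As $\varphi(x,c_{\delta,1})$ is the only member of $F$ definable over $N_{\delta'}$ ($\delta'$ the successor limit) that is not definable over $N_\delta$, the sequence $\langle N_\delta\rangle$ witnesses $\neg(2)$.

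The substantial direction is $(1)\Rightarrow(4)$, which I would split into two halves. Write $\mathrm{GOOD}=\{N\prec M : |N|=|T|,\ F\text{ strongly dually Kim-divides over }N\}$; by Remark~\ref{rem:strong Kim-dividing =00003D Kim div in NSOP1}, in an NSOP$_1$ theory this coincides with $\{N : F\text{ dually Kim-divides over }N\}$. First, $\mathrm{GOOD}$ contains a club --- equivalently, if its complement is stationary in $[M]^{|T|}$ then $T$ has SOP$_1$. I would prove this exactly as the Main Lemma (Lemma~\ref{hard direction}), redoing Section~\ref{sec:A proof using stationary logic} with dual types in place of $p$: a reduction to a countable sublanguage and a single pair $(\varphi,k)$ (as in Lemma~\ref{reducetocountable}), a stretching of the stationary family of bad models via the $\lambda$-interpretation of the ambient $\mathrm{Th}_{\aa}$ (as in Lemma~\ref{aleph1}), and the Fodor/pigeonhole extraction of an SOP$_1$-array (Fact~\ref{karyversion}). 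The only modification is that, for each bad model $N$, one first selects a single non-$N$-definable $\varphi(x,a_N)\in F$ that fails to Kim-divide over $N$; thereafter those lemmas manipulate precisely this datum, while $k$-inconsistency of $F$ remains available.

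The second half, and the step I expect to be the real obstacle, is that $\mathrm{GOOD}$ is \emph{closed} under increasing unions of chains of length $\le|T|$ --- the dual of Lemma~\ref{lem:closed under union}; combined with the first half this makes $\mathrm{GOOD}$ an unbounded closed set, i.e.\ a club, which is (4), and (3) follows. The naive attempt follows the proof of Lemma~\ref{lem:closed under union}: with $\langle N_\alpha\rangle$ a chain, $N_\gamma=\bigcup N_\alpha$, $F$ dually Kim-dividing over each $N_\alpha$, and $X=\varphi(x,a)\in F$ not definable over $N_\gamma$, one must push ``$X$ Kim-divides over each $N_\alpha$'' up to $N_\gamma$ --- but that is exactly the dual of Lemma~\ref{lem:if phi(x,a) forks over big then also over small}, i.e.\ Question~\ref{que:Is-the-dual}, which is open in general, so the $k$-inconsistency of $F$ must enter essentially. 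I expect the correct argument to proceed by contradiction: if $X$ fails to Kim-divide over $N_\gamma$, then a coheir sequence over $N_\gamma$ witnessing this, together with coheir sequences over the $N_\alpha$ witnessing $k$-Kim-dividing of $X$ there, should be amalgamated --- using the independence theorem over the models $N_\alpha$ --- into a tree of parameters with consistent branches along which $\varphi$ is $k$-inconsistent across levels, i.e.\ into an SOP$_1$-configuration as in Fact~\ref{karyversion}; carrying out this amalgamation with the attendant cardinal bookkeeping is where the work lies. A fallback would be to fold closedness directly into the stationary-logic proof of the first half, stretching the counterexample so as to also violate the continuity constraint.
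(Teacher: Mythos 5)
Your cycle $(1)\Rightarrow(4)\Rightarrow(3)\Rightarrow(2)\Rightarrow(1)$ and the arguments for $(4)\Rightarrow(3)\Rightarrow(2)$ and $(2)\Rightarrow(1)$ coincide with the paper's: the paper obtains $(2)\Rightarrow(1)$ precisely by "dualizing" the corresponding step of Theorem~\ref{mainthm(1)-(6)}, exchanging the roles of $\langle c_{i,0}\rangle$ and $\langle c_{i,1}\rangle$ as you describe, and your verification that $\varphi(x,c_{\delta,1})$ Kim-divides but does not \emph{strongly} Kim-divide over $N_{\delta}$ (via a conjugate of the coheir pushed to the $0$-coordinate) is exactly the intended content. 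Likewise, the first half of your $(1)\Rightarrow(4)$ --- rerunning Lemma~\ref{reducetocountable}, Lemma~\ref{aleph1} and Lemma~\ref{hard direction} on the stationary family of bad models, each equipped with a single non-$N$-definable $\varphi(x,a_{N})\in F$ failing to Kim-divide over $N$, so that at the end $\{\varphi(x,e_{i})\}$ is $k$-inconsistent (the $e_{i}$ give distinct members of the $k$-inconsistent family) while $\{\varphi(x,f_{j})\}$ is consistent --- is precisely the paper's proof of $(1)\Rightarrow(4)$.

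The one genuine problem is your "second half". Statement (4) reads "same as (3) but $S$ is a club", i.e.\ \emph{there exists} a club every member of which is good; it does not assert that the set of all good $N$ \emph{is} a club. So once the complement of $\mathrm{GOOD}$ is non-stationary, $\mathrm{GOOD}$ contains a club and (4) is already proved; closedness of $\mathrm{GOOD}$ under increasing unions is not needed, and (3) follows from (4) regardless. This matters because the closedness you attempt is not actually established: as you yourself note, it runs into the dual of Lemma~\ref{lem:if phi(x,a) forks over big then also over small}, i.e.\ Question~\ref{que:Is-the-dual}, which the paper leaves open, and your proposed amalgamation via the independence theorem is only a speculation, not a proof. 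Had (4) required $\mathrm{GOOD}$ itself to be a club, your argument would have a real gap there; as the statement stands, you should simply delete the second half.
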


\begin{proof}
The proof is essentially dualizing or inverting the proof (using stationary
logic) of Theorem \ref{thm:Main-intro} (1) $\implies$ (4), but we
go into some details. 

(1) $\implies$ (4). We follow the proof of ``(1) implies (4)''
of Theorem \ref{thm:Main-intro} as described in Section \ref{sec:A proof using stationary logic}.
Namely, assume that (2) fails. This means that there is a stationary
subset $S$ of $\left[M\right]^{\left|T\right|}$ such that if $N\in S$
then $N\prec M$ and there is some $X\in F$ which is not definable
over $N$ but still does not Kim-divide over $N$. Using the same
proof as in Lemma \ref{reducetocountable}, we may assume that the
language $L$ is countable and that there is a single formula $\varphi\left(x,y\right)$
with $\left|x\right|=n$ such that if $N\in S$ then for some $b\in M\backslash N$,
$\varphi\left(x,b\right)$ does not Kim-divide over $N$ (and $\varphi\left(x,b\right)$
is not $N$-definable). Now we repeat the same procedure as in Lemma
\ref{aleph1}. Thus, for a regular uncountable cardinal $\lambda=\lambda^{<\lambda}$,
we get a model $M'\models T$, $\left|M'\right|=\lambda^{+}$, a formula
$\varphi\left(x,y\right)$, and a $k$-inconsistent family $F_{*}$
of definable formulas over $M'$ so that
\[
S_{0}'=\set{N'\prec M'}{\left|N'\right|=\lambda,\exists\varphi\left(x;a'_{N}\right)\in F_{*}\text{ not \ensuremath{N'}-definable and does not Kim-divide over }N'}
\]
is $D_{\lambda}\left(M'\right)$-stationary. Now we repeat the proof
of Lemma \ref{hard direction}. The contradiction we will arrive at
the end will be the same contradiction, but the roles of the sequences
$e_{i}$ and $f_{j}$ are reversed. Now $\set{\varphi\left(x,e_{i}\right)}{i<\omega}$
is $k$-inconsistent (note that the formulas $\varphi\left(x,e_{i}\right)$
must define distinct definable sets from $F_{*}$) and $\sequence{\varphi\left(x,f_{j}\right)}{j<n}$
is consistent. 

(4) $\implies$ (3) $\implies$ (2) is exactly as in the proof of
Lemma \ref{mainthm(1)-(6)}. The proof of (2) $\implies$ (1) is just
dualizing the proof of ``(2) implies (1)'' in Theorem \ref{mainthm(1)-(6)}
in the sense that the sequences $\sequence{c_{i,0}}{i<\omega}$ and
$\sequence{c_{i,1}}{i<\omega}$ exchange places.
\end{proof}
\begin{question}
Is there a proof of the dual local character which does not use stationary
logic? Such a proof may reveal some new properties of Kim-dividing. 
\end{question}

\bibliographystyle{alpha}
\bibliography{ms}

\end{document}